\newtheorem{thm}{Theorem}[section]
\newtheorem{prop}[thm]{Proposition}
\newtheorem{lemma}[thm]{Lemma}
\newtheorem{cor}[thm]{Corollary}
\theoremstyle{definition}
\newtheorem{definition}[thm]{Definition}
\newtheorem{remark}[thm]{Remark}
\newcommand{\der}{\partial}
\newcommand{\bra}[1]{\left( #1 \right)}
\newcommand{\set}[1]{\left \{#1 \right \}}
\newcommand{\bs}{\backslash}
\newcommand{\ssq}{\subseteq}
\newcommand{\allstar}[1]{\[\begin{aligned} #1  \end{aligned}\]}
\newcommand{\gen}[1]{\langle #1 \rangle}
\newcommand{\bbm}{\begin{bmatrix}}
\newcommand{\ebm}{\end{bmatrix}}
\newcommand*{\longhookrightarrow}{\ensuremath{\lhook\joinrel\relbar\joinrel\rightarrow}}
\newcommand*{\inj}{\longhookrightarrow}
\newcommand{\wtilde}{\widetilde}
\DeclareMathOperator{\height}{ht}
\newcommand{\ra}{\rightarrow}
\newcommand{\mbc}{\mathbb{C}}
\newcommand{\piecewise}[3]{\begin{cases}
#1 & \text{if }#2 \\
#3 & \text{otherwise }
\end{cases}}
\newcommand{\midskip}{\hspace{0.1in}}
\newcommand{\piecewiseThree}[5]{\begin{cases}
#1 & \text{if }#2 \\
#3 & \text{if }#4 \\
#5 & \text{otherwise }
\end{cases}}
\newcommand{\sjoin}{\text{\Lightning}}
\newcommand{\bolt}{\sjoin}
\newcommand{\paral}{\star}
\renewcommand{\P}{\mathcal{P}}
\newcommand{\kirk}[1]{{#1}}
\newcommand{\breaker}[1]{{\overline{#1}}}
\newcommand{\dkirk}[2]{\kirk{#1}^{#2}}
\newcommand{\dbreaker}[2]{\breaker{#1}^{#2}}
\title{Some results on an algebro-geometric condition on graphs}
\author{Avi Kulkarni, Gregory Maxedon, and Karen Yeats}
\thanks{AK was supported by an NSERC PGS D. GM was supported by an NSERC USRA during this project.  KY is supported by an NSERC discovery grant.}
\begin{document}

\newcommand{\Ais}{\overset{12}{A}}
\newcommand{\Aisef}{\overset{12,34}{A}}
\newcommand{\Aiseg}{\overset{12,35}{A}}
\newcommand{\Aisfg}{\overset{12,45}{A}}
\newcommand{\Aisefg}{\overset{12,345}{A}}
\newcommand{\Aef}{\overset{34}{A}}
\newcommand{\Aeg}{\overset{35}{A}}
\newcommand{\Afg}{\overset{45}{A}}
\newcommand{\Aefg}{\overset{345}{A}}
\newcommand{\Aa}{A^{a_i}} 
\newcommand{\Aisa}{\overset{12}{A}\overset{{a_i}}{\textcolor{white}{a}}}
\newcommand{\Aefa}{\overset{34}{A}\overset{{a_i}}{\textcolor{white}{a}}}
\newcommand{\Aega}{\overset{35}{A}\overset{{a_i}}{\textcolor{white}{a}}}
\newcommand{\Afga}{\overset{45}{A}\overset{{a_i}}{\textcolor{white}{a}}}
\newcommand{\Aefga}{\overset{345}{A}\overset{{a_i}}{\textcolor{white}{a}}}
\newcommand{\Bis}{\overset{12}{B}}
\newcommand{\Bie}{\overset{13}{B}}
\newcommand{\Bieb}{{\overset{13}{B}\overset{b_j}{\textcolor{white}{ _*}}}}
\newcommand{\Bef}{\overset{34}{B}}
\newcommand{\Bb}{B^{b_j}}

\begin{abstract}
Paolo Aluffi, inspired by an algebro-geometric problem, asked when the Kirchhoff polynomial of a graph is in the Jacobian ideal of the Kirchhoff polynomial of the same graph with one edge deleted.  

We give some results on which graph-edge pairs have this property.  In particular we show that multiple edges can be reduced to double edges, we characterize which edges of wheel graphs satisfy the property, we consider a stronger condition which guarantees the property for any parallel join, and we find a class of series-parallel graphs with the property.
\end{abstract}

\maketitle

\section{Introduction}

Over the last decade there has been an interest in taking an algebraic geometry inspired approach to understanding Feynman integrals \cite{Bljapan, bek, Brbig, BrS, BrS3, BrSY, BrY, Doreg, Mmotives}.  The key object of study is the \emph{graph hypersurface} which we can define as follows.  Given a multigraph $G$ (henceforth we will just say graph with the understanding that multiple edges and self-loops are permitted) to each edge $e$ of $G$ assign a variable $t_e$ and define the \emph{Kirchhoff polynomial}\footnote{This polynomial is also known as the first Symanzik polynomial, and sometimes the Kirchhoff polynomial is instead defined dually with the condition $e\in T$ in place of $e\not\in T$, see for example \cite{BW}.} of $G$ by
\[
\Psi_G = \sum_{\substack{T \text{ spanning}\\\text{tree of }G}}\prod_{e \not\in T}t_e.
\]
The graph hypersurface is then simply the variety given by the zero set of this polynomial, viewed in projective space or in affine space depending on context.  This relates back to Feynman integrals because the Kirchhoff polynomial plays a key role in the integrand of the Feynman integral of $G$ in parametric form, see for example \cite{BW}.  In fact, viewing $G$ as a massless scalar Feynman diagram, the first interesting piece of the Feynman integral is simply
\[
 \int_{t_i \geq 0} \frac{\Omega}{\Psi_G^2}
\]
where $\Omega = \sum_{i=1}^{|E(G)|}(-1)^i dt_1 \cdots dt_{i-1} dt_{i+1} \cdots dt_{|E(G)|}$.
This has come to be known as the Feynman period, see \cite{Sphi4}, and is a very interesting object physically, number theoretically, and combinatorially.

\medskip
The interplay between Kirchhoff polynomials for different graphs will be crucial to our argument.  Therefore to keep the notation light we will abuse notation and simply write $\kirk{G}$ for~$\Psi_G$.  We will take the convention that the Kirchhoff polynomial of a disconnected graph is 0.

At the level of the Kirchhoff polynomial, edge deletion, equivalently partial derivative, will be indicated by superscripts and edge contraction, equivalently setting variables to $0$, by subscripts.  That is, for Kirchhoff polynomials in this notation
\[
\kirk{G}^e = \kirk{G\smallsetminus e} = \partial_e \kirk{G} \quad \text{and} \quad \kirk{G}_e = \kirk{G/e} = \kirk{G}|_{e=0}.
\]
These facts are elementary consequences of the definition of the Kirchhoff polynomial.  A similar argument gives the classical contraction deletion relation
\[
G = t_eG^e + G_e \quad \text{for $e$ not a bridge or self-loop}.
\]

\medskip

Aluffi and Marcolli in \cite{AMfeyn} gave a definition of algebro-geometric Feynman rules which captured the most basic properties of Feynman rules in quantum field theory.  Specifically, they require the multiplicative property for disjoint unions of graphs, which is a restatement of the multiplicative property of independent events in basic probability; and they require the formula for recasting Feynman diagrams as trees of one-particle-irreducible diagrams.  

Other more advanced properties of physical Feynman rules can also be captured in algebraic language, see for example \cite{Pmsc}, but that is another story.

Aluffi and Marcolli then look at examples of their algebro-geometric Feynman rules which appear natively in the land of algebraic geometry.  One example comes from classes in the Grothendieck ring; another comes from Chern class calculations and gives univariate polynomial output.

In the course of studying when this second example satisfies contraction-deletion relations, Aluffi in \cite{Acond} needed to assume two technical conditions, which he calls Condition 1 and Condition 2, each of which is a condition on a pair of a graph $G$ along with an edge $e$ of~$G$.

\medskip

Aluffi's condition 1 is not difficult to state. Throughout the paper we follow \cite{Acond} and work over $\mathbb{Q}$ (See \cite[Section 2.4]{Acond}).
\begin{definition}
Let $G$ be a connected graph and $e$ an edge of $G$.  Then condition 1 for the edge $e$ of $G$, written $1(G,e)$, is the statement
\[
\kirk{G} \in \langle \partial \kirk{G}^e \rangle
\]
where $\langle \partial \kirk{G}^e \rangle$ is the ideal of partial derivatives, i.e. the Jacobian ideal, of $\kirk{G}^e$, the Kirchhoff polynomial of $G$ with $e$ deleted.
\end{definition}

There are a few immediate observations worth making.  From the contraction deletion relation and Euler's homogeneous function theorem, see Theorem~\ref{thm euler}, condition 1 for regular edges is equivalent to the statement
\[
\kirk{G}_e \in \langle \partial \kirk{G}^e \rangle.
\]

Certain special cases of edges are easy to understand.  If $e$ is a self-loop then $\kirk{G} = t_e\kirk{G}^e$ and so $1(G,e)$ is true.  If $e$ is a bridge then $\kirk{G}^e = 0$ since $G\smallsetminus e$ is disconnected and so has no spanning trees; thus $1(G,e)$ is false.  Finally, if $G\smallsetminus e$ is a tree then $\kirk{G}^e = 1$ and so again $1(G,e)$ is false.  Aluffi calls an edge which does not fall into one of the previous cases \emph{regular}.

\medskip

In this paper we investigate the graph theoretic underpinning of condition 1.  We are not able to obtain a full characterization of graph-edge pairs which satisfy the condition, but we do obtain the following interesting results.  Propositions \ref{2 imp 1} and 
\ref{2 imp 3} give that multiple edges of any multiplicity greater than 1 are equivalent to double edges from the perspective of condition 1.  Propositions \ref{rim prop} and \ref{spoke prop} show that for wheels with at least 4 spokes, condition 1 is false for all rim edges and true for all spoke edges.  Then we move to focusing on series-parallel graphs.  Definition~\ref{simultaneous combination} gives a stronger condition which, by Corollary~\ref{stability cor}, shows when condition 1 is stable under parallel join.  Finally, Corollary~\ref{S-for-co-Hamiltonians} describes a class of series-parallel graphs where condition 1 holds for all edges and Proposition~\ref{replacement by co-Hamiltonian} builds from this a much larger class of series-parallel graphs (and some other graphs) with specific edges for which condition 1 holds.

\section{Preliminaries}
For our arguments we want to consider cases of identifying vertices.  We will use the following notation
\[
\overset{12}{G}
\]
for the Kirchhoff polynomial of the graph $G$ with vertices $1$ and $2$ identified, and more generally if $s_1, s_2, \ldots$ are sets of vertices of $G$ then
\[
\overset{s_1, s_2, \ldots}{G}
\]
is the Kirchhoff polynomial of the graph $G$ with the vertices of $s_1$ identified, the vertices of $s_2$ identified and so on.

By considering the possible spanning trees we can write down the 1- and 2-cut formulas for the Kirchhoff polynomial.  Specifically, if $G$ is formed from $H$ and $H'$ joined at a vertex then
\[
G = HH'
\]
and if $G$ is formed from $H$ and $H'$ joined at two vertices, 1 and 2, then
\[
G = \overset{12}{H}H' + H\overset{12}{H'}.
\]

One of the main algebraic tools we'll use is Euler's homogeneous function theorem.
 \begin{thm}[Euler's Theorem]\label{thm euler}
Let $R$ be a ring and  $f \in R[x_1, \ldots, x_n]$ be a homogeneous function of degree $m$. Then
\[
m \cdot f = \sum_{j = 1}^n x_j \frac{\der f}{\der x_j}.
\]
\end{thm}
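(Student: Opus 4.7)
The plan is to prove Euler's theorem by a direct monomial computation, exploiting that both sides of the identity depend $R$-linearly on $f$. First I would observe that the map $f \mapsto \sum_{j=1}^n x_j \partial f/\partial x_j$ is $R$-linear, and so is the map $f \mapsto m\cdot f$. Since every homogeneous polynomial of degree $m$ is an $R$-linear combination of monomials of degree $m$, it suffices to verify the identity on a single monomial $x^\alpha := x_1^{\alpha_1}\cdots x_n^{\alpha_n}$ with $\alpha_1 + \cdots + \alpha_n = m$.

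Next I would carry out this monomial verification. For each $j$, the formal partial derivative gives $\partial x^\alpha / \partial x_j = \alpha_j x_1^{\alpha_1} \cdots x_j^{\alpha_j - 1} \cdots x_n^{\alpha_n}$ (interpreted as $0$ when $\alpha_j = 0$), so $x_j \cdot \partial x^\alpha / \partial x_j = \alpha_j x^\alpha$. Summing over $j$ produces $\left(\sum_{j=1}^n \alpha_j\right) x^\alpha = m \cdot x^\alpha$, as required. Combining this with the linearity reduction finishes the argument.

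The main obstacle is essentially nil; the only subtlety worth flagging is that the theorem is stated over an arbitrary ring $R$, so one should not appeal to any calculus over $\mathbb{R}$ or to a chain-rule argument of the form $f(\lambda x_1,\ldots,\lambda x_n) = \lambda^m f$ followed by $\partial/\partial \lambda$ at $\lambda = 1$ (which would implicitly assume characteristic zero or at least require working with $R[\lambda]$ and justifying formal differentiation there). The monomial-by-monomial approach keeps everything purely formal and avoids any assumption on the characteristic of $R$: the integer $m$ acts on $R$ in the usual way, and the identity holds even if $m = 0$ in $R$ (in which case both sides vanish).
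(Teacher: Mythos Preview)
Your proof is correct: the reduction by $R$-linearity to a single monomial, followed by the computation $x_j\,\partial x^\alpha/\partial x_j = \alpha_j x^\alpha$ and summation, is the standard purely formal argument and works over an arbitrary ring as stated. The paper itself does not give a proof of this theorem; it is simply quoted as a classical tool, so there is nothing to compare against beyond noting that your argument is the expected elementary one.
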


Next, we will give a few propositions explaining how small vertex and edge cuts affect condition 1.

\begin{prop}\label{1vertex}
If $ G $ and $ H $ are connected graphs that are joined at one vertex and $ e $ is a regular edge of $ G $, then $ 1(G,e) \Leftrightarrow 1(G \cup H, e) $.

\begin{center}
\includegraphics[width=0.36\textwidth]{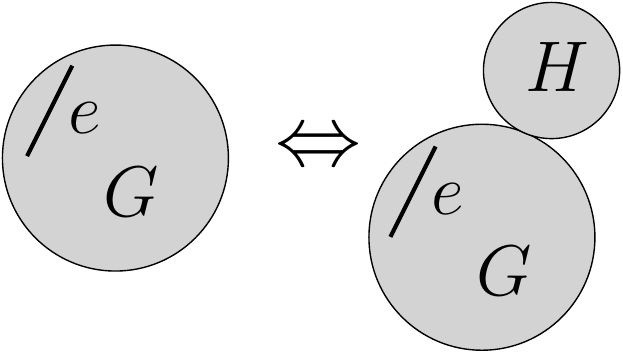}
\end{center}

\end{prop}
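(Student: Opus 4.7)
The plan rests on the one-vertex cut formula $G \cup H = G \cdot H$ from the preliminaries. Because $E(G)$ and $E(H)$ are disjoint, the polynomials $G$ and $H$ live in disjoint polynomial rings, and since $e \in E(G)$ we have $(G \cup H)^e = \partial_e(G\cdot H) = H \cdot G^e$. Taking further partial derivatives splits by variable: for $f \in E(G) \setminus \{e\}$ one gets $H \cdot \partial_f G^e$, and for $f \in E(H)$ one gets $G^e \cdot \partial_f H$. Thus the Jacobian ideal $\langle \partial (G \cup H)^e \rangle$ is generated by these two families.

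The forward implication is then automatic: if $G = \sum_f a_f \cdot \partial_f G^e$ witnesses $1(G,e)$, multiplying by $H$ yields $G \cdot H = \sum_f a_f \cdot H \cdot \partial_f G^e$, which exhibits $G \cup H \in \langle \partial (G \cup H)^e \rangle$.

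For the converse, I would start with any decomposition
\[
G \cdot H = \sum_{f \in E(G) \setminus \{e\}} a_f \, H\, \partial_f G^e \; + \; \sum_{f \in E(H)} b_f \, G^e \, \partial_f H
\]
with $a_f, b_f \in \mathbb{Q}[t_{E(G) \cup E(H)}]$ and specialize every $H$-variable to $1$. Under this substitution $H$ becomes the positive integer $\tau(H)$, the number of spanning trees of $H$, while $G$, $G^e$, and $\partial_f G^e$ for $f \in E(G)$ are unchanged. The equation reduces to $\tau(H)\cdot G = \tau(H) \cdot A + G^e \cdot Q$ for some $A \in \langle \partial G^e \rangle$ and some $Q \in \mathbb{Q}[t_{E(G)}]$, so dividing by $\tau(H)$ places $G$ in the ideal $\langle \partial G^e, G^e \rangle$ inside $\mathbb{Q}[t_{E(G)}]$.

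To absorb the extra $G^e$ term I would apply Euler's theorem (Theorem~\ref{thm euler}) to $G^e$: since $e$ is regular, $G \setminus e$ is connected but not a tree, so $G^e$ is homogeneous of positive degree (the loop number of $G \setminus e$), and Euler expresses $G^e$ as a $\mathbb{Q}$-combination of the $t_f \cdot \partial_f G^e$. Hence $G^e \in \langle \partial G^e \rangle$ and therefore $G \in \langle \partial G^e \rangle$, proving $1(G,e)$. The only genuinely delicate point is that the coefficients $a_f, b_f$ generically mix $G$- and $H$-variables; the specialization $t_f = 1$ on $E(H)$ is the main trick, designed to decouple the two polynomial rings while keeping the $\tau(H)$ prefactor nonzero so that it can be inverted over $\mathbb{Q}$.
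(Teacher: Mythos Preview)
Your proof is correct and follows essentially the same route as the paper: both use the one-vertex cut formula $(G\cup H)^e = H\cdot G^e$, establish the forward direction by multiplying by $H$, and for the converse specialize all $H$-variables to $1$ and then invoke Euler's theorem to absorb the leftover $G^e$ term. The only cosmetic difference is that the paper phrases everything in terms of $G_e$ rather than $G$, and is slightly less explicit about why the scalar $H|_{t_f=1}=\tau(H)$ is invertible.
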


\begin{proof}
Let $ G $ and $ H $ be as in the statement.  

$ (\Rightarrow) $ Suppose $ 1(G,e) $, so for some $ P_i $ in the edge variables $ a_i $ of $ G $,
$$ G_e = \sum_i P_i G^{ e a_i } .$$
Then, since neither $ e $ nor the $ a_i $ are variables of $ H $,
$$ 
\begin{aligned} 
(G \cup H)_e 
= G_e \cup H 
= H G_e 
&= H \sum_i P_i G^{e a_i} \\
&= \sum_i P_i H G^{e a_i} \\
&= \sum_i P_i (H \cup G^{e a_i}) \\
&= \sum_i P_i (G \cup H)^{e a_i}. \\
\end{aligned}
$$
Hence $ 1(G \cup H, e) $ .

$ (\Leftarrow) $ Suppose $ 1(G \cup H, e) $, so for some sets of polynomials $ \{P_i\} $ and $ \{Q_j\} $, with all polynomials in the edge variables $ a_i $ of $ G $ and $ b_j $ of $ H $,
$$ 
\begin{aligned}
(G \cup H)_e = H G_e 
&= \sum_i P_i (G \cup H)^{e a_i} + \sum_j Q (G \cup H)^{e b_i} \\
&= \sum_i P_i H G^{e a_i} + \sum_j Q_j G^e H^{b_j} \\
&= H \sum_i P_i G^{e a_i} + G^e \sum_j Q_j H^{b_j}
. \\
\end{aligned}
$$
Set all of the $ b_j $ equal to $ 1 $.  Then
$$ 
\begin{aligned}
H|_{\substack{b_j = 1 \\ b_j \text{ edge} \\\text{of }H}}G_e
&= H|_{\substack{b_j = 1 \\ b_j \text{ edge} \\\text{of }H}} \sum_i P_i|_{\substack{b_j = 1 \\ b_j \text{ edge}\\\text{of }H}} G^{e a_i} + G^e \left.\left(\sum_u Q_u H^{b_u}\right)\right|_{\substack{b_j = 1 \\ b_j \text{ edge}\\\text{of }H}}
. \\
\end{aligned}
$$
So
$$ G_e = \sum_i R_i G^{e a_i} + S G^e, $$
where the $ R_i $ and $ S $ are polynomials in the variables $ a_i $.  Since Theorem 
\ref{thm euler} 
implies  $ G^e \in \langle \partial G^e \rangle $ and the $ G^{e a_i} $ are themselves partials, we have $ 1(G,e) $.
\end{proof}

\begin{prop}\label{2edge}
Whenever there is a two edge cut-set, contracting one of these edges has no effect on condition 1 for the remaining edges. Formally, let $G$ be a graph with a two edge-cut set $\set{x,y}$. Then for all $e \in G \bs \set{x}$ we have $1(G,e) \iff 1(G_x,e)$.

\begin{center}
\includegraphics[width=0.36\textwidth]{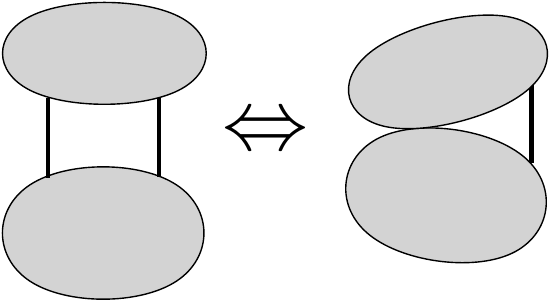}
\end{center}

\end{prop}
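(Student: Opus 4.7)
The plan is to decompose $G$ along the two-edge cut into closed form, observe that $\Psi_G$ depends on $t_x$ and $t_y$ only through the sum $t_x + t_y$, and transport Jacobian-ideal membership along the invertible substitution $t_y \mapsto t_x + t_y$ (or its inverse, specialization $t_x = 0$). Setting up notation, let $H_1, H_2$ be the two components of $G\setminus\{x,y\}$, with $x = u_1u_2$ and $y = v_1v_2$. Case analysis on how the two cut edges meet a spanning tree of $G$ (both in, only $x$ in, only $y$ in; neither is impossible) gives
\[
G = (t_x+t_y)\,H_1 H_2 \;+\; H_1\,\overset{u_2 v_2}{H_2} \;+\; \overset{u_1 v_1}{H_1}\,H_2,
\]
and hence $G_x = G|_{t_x=0}$ has the same form with $(t_x+t_y)$ replaced by $t_y$. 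Applying $\partial_e$ to both formulas (valid for any $e\neq x$) yields the analogous relation for $G^e$ and $G_x^e$, and in particular $\partial_x G^e = \partial_y G^e$.

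For $(\Rightarrow)$, starting from a relation $G = \sum_f P_f\,\partial_f G^e$ I would set $t_x = 0$ in both sides. The left becomes $G_x$; on the right, $(\partial_f G^e)|_{t_x=0} = \partial_f G_x^e$ for every $f \neq x$, while $(\partial_x G^e)|_{t_x = 0}$ equals $\partial_y G_x^e$, which is already a generator of $\langle \partial G_x^e\rangle$. Hence $G_x \in \langle \partial G_x^e\rangle$. For $(\Leftarrow)$, starting from a relation $G_x = \sum_{f\neq x} Q_f\,\partial_f G_x^e$ I would apply the invertible linear substitution $t_y \mapsto t_x + t_y$ to both sides. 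The left becomes $G$, and for each $f\neq x$ the chain rule gives $(\partial_f G_x^e)|_{\mathrm{sub}} = \partial_f G^e$ (automatic for $f \neq x, y$, and trivially true for $f = y$ since $\partial_y G_x^e$ does not involve $t_y$). So the right side lies in $\langle \partial G^e\rangle$, giving $G \in \langle \partial G^e\rangle$.

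I expect the main obstacle to be establishing the closed form for $G$ correctly: in the case that both $x$ and $y$ belong to a spanning tree, one must check that exactly one of $H_1, H_2$ contributes a spanning tree while the other contributes a spanning $2$-forest separating its marked vertices, so that the Case~1 contribution is the sum $H_1\,\overset{u_2 v_2}{H_2} + \overset{u_1 v_1}{H_1}\,H_2$ rather than (as a naive enumeration might suggest) the product $\overset{u_1 v_1}{H_1}\,\overset{u_2 v_2}{H_2}$. Once this formula is in hand, the symmetry of $t_x$ and $t_y$ in $G$ makes the substitution argument nearly formal, and cleanly handles the edge case $e = y$ as well, where $G^y = G_x^y = H_1 H_2$ outright and the two Jacobian ideals coincide.
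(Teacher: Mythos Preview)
Your proposal is correct and follows essentially the same approach as the paper: both establish the closed form showing $\Psi_G$ depends on $t_x,t_y$ only through $t_x+t_y$, and then transport ideal membership via the specialization $t_x=0$ in one direction and the substitution $t_y\mapsto t_x+t_y$ in the other. Your version is slightly more streamlined in that you work uniformly with the $G\in\langle\partial G^e\rangle$ formulation and handle the $f=x$ term directly via $(\partial_x G^e)|_{t_x=0}=\partial_y G_x^e$, whereas the paper splits into the cases $e=y$ versus $e\notin\{x,y\}$ and, in the latter, first merges the $x$- and $y$-derivative terms using $G^{ex}=G^{ey}$ before specializing.
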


\begin{proof}
We can draw $G$ as

\begin{center}
\includegraphics[width=0.40\textwidth]{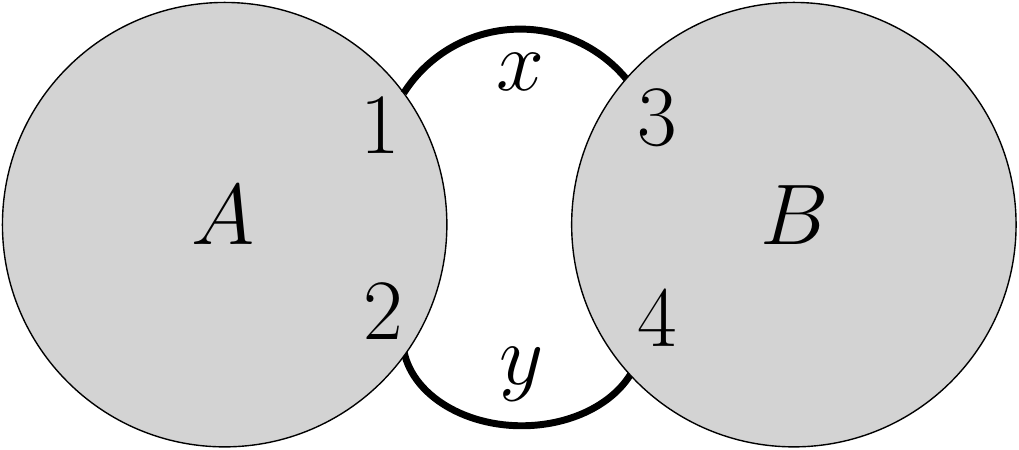}
\end{center}

With $ G $ as drawn we see that
$$
G = (x+y)AB + \Ais B + A \Bef
$$
and
$$
G_x = y AB + \Ais B + A \Bef.
$$

To get $G_x$ from $G$ we contracted $x$ by setting $x=0$.  Notice that in $G$, the variables $x$ and $y$ only appear in the term $(x+y)$.  We also see that the occurrence of $(x+y)$ in $G$ corresponds exactly to the occurrence of $y$ in $G_x$.  Therefore we can recover $G$ from $G_x$ by making the replacement $ y \rightarrow x+y $.

To delete $ y $ we take the $ y $ derivatives of $ G $ and $ G_x $, and we likewise delete $ x $ from $ G $ and $ G_y $.  Then we have the identity 
$$ G^x = G^y = G^y_x = G^x_y = AB .$$
We will also use the contraction deletion relation
$$  G_y = x G_y^x + G_{yx} .$$

There are two cases to be considered: (1) the edge that condition 1 is being tested for belongs to the two edge cut-set; (2) it does not belong to the cut-set. 


\begin{proof}[Proof of case 1]

Let $ y $ be the edge that we are testing for condition 1.
\begin{enumerate}
\item[$ (\Rightarrow) $] 
Suppose $ 1(G,y) $.  Then 
$$ G_y \in \left< \partial G^y \right> 
= \left< \partial G^y_x  \right> .$$
Hence
$$ G_{yx} = G_y - x G_y^x = G_y - x G_x^y \in \left< \partial G_x^y \right> .$$
\item[$ (\Leftarrow) $]
 Suppose $ 1(G_x,y) $.  Then 
$$ G_{yx} \in \left< \partial G_x^y \right> .$$
Then
$$ G_y 
= x G_y^x + G_{yx} 
= x G_x^y + G_{yx} 
\in \left< \partial G_x^y \right> 
= \left< \partial G^y \right> .$$
\end{enumerate}
\renewcommand{\qedsymbol}{}
\end{proof}

\begin{proof}[Proof of case 2]

Let the edge $ e $ that we are testing for condition 1 belong to either $A$ or $B$, and wherever the $ a_i $ appear below, let them range over the edge variables not equal to $ x $, $ y $, or $ e $.
\begin{enumerate}

	\item[$ (\Rightarrow) $] 
	Suppose $ 1(G, e) $, so that for some polynomials $P_i,Q,R$ we have
	$$ G_e = \sum_i P_i G^{e a_i} + Q G^{e x} + R G^{e y}. $$
	Now we use $ G^{ex} = G^{ey} = G^{ey}_x $, and let $ S = Q+R $, so that
	$$ G_e = \sum_i P_i G^{e a_i} + S G_x^{e y}. $$
	Then we set $ x=0 $ to get
	$$ G_{ex} = \sum_i P_i(x=0) G_x^{e a_i} + S(x=0) G_x^{e y}. $$
	
	\item[$ (\Leftarrow) $] 
	Suppose $ 1(G_x,e) $, so that for some polynomials $P_i,R$ we have
	$$ G_{ex} = \sum_i P_i G_x^{e a_i} + R G_x^{e y}. $$
	Then, by the replacement $ y \rightarrow x+y $ we recover $ G_e $ from $ G_{ex} $ on the left hand side and we recover $ G^{e a_i} $ and $ G^{e y} $ on the right hand side. Therefore
	$$ G_e = \sum_i P_i(y \rightarrow x+y) G^{e a_i} + R(y \rightarrow x+y) G^{e y}. $$
\end{enumerate}
\renewcommand{\qedsymbol}{}
\end{proof}
\end{proof}

\medskip

Next we will define some special classes of graphs that we will use.

\begin{definition}
The \emph{wheel with $n$ spokes} is the graph with $n+1$ vertices consisting of a cycle of length $n$ along with an additional vertex which is adjacent to all the vertices of the cycle.  The edges of the cycle are called \emph{rim edges} while the other edges are called \emph{spoke edges}.
\end{definition}

\begin{definition}
A \emph{source-terminal graph} is a graph $G$ with two distinct marked vertices $s,t \in V(G)$. 

If $G$ and $H$ are two source-terminal graphs then we can define their \emph{parallel join} as being the source-terminal graph $G\paral H$, 
which is the disjoint union of $G$ and $H$ with the sources and terminals identified and with these two vertices forming the source and terminal of $G\paral H$.  

If $G$ and $H$ are two source-terminal graphs then we can define their \emph{series join} as being the source-terminal graph $G \bolt H$ which 
is the disjoint union of $G$ and $H$ with the source of $H$ identified with the terminal of $G$, with the source of $G$ becoming the source of $G\bolt H$, and the terminal of $H$ becoming the terminal of $G\bolt H$.  
\end{definition}

\begin{definition}
We take a series-parallel graph to be a source-terminal graph $G$ such that $G$ is either

\begin{enumerate}

\item
$G = K_2$.

\item
$G$ is the parallel join of two series-parallel graphs $H,H'$, i.e. $G := H\paral H'$.

\item
$G$ is the series join of two series-parallel graphs $H,H'$, i.e. $G := H \sjoin H'$.

\end{enumerate}
\end{definition}

In the case of series-parallel graphs the Kirchhoff polynomial for the graph with the two terminals identified is particularly important and so we will use the following notation, 
\[
\breaker{G} = \overset{st}{\kirk{G}}
\]
for any source-terminal graph $G$.

If we interpret $s,t \in V(G)$ as the only vertices with external edges then we recover $\breaker{G}$ as the second Symanzik polynomial.  That is 
\[
\breaker{{G}} = \sum_{T_1,T_2} \prod_{e \notin T_1 \cup T_2} x_e
\] 
Where $T_1,T_2$ are trees, $s \in T_1$, $t \in T_2$, $T_1 \cap T_2 = \emptyset$ and $V(G) \ssq T_1 \cup T_2$.  Call spanning forests of 2 trees with these properties \emph{spanning-st-forests}.

The following are all reformulations of the 1 and 2 vertex cut formulas for the Kirchhoff polynomial as applied to the series and parallel operations.
\begin{lemma} \label{Psi-P identities}
Let $H,H'$ be source-terminal graphs. Then
\begin{enumerate}[(a)]

\item
$\kirk{H\paral H'} = \kirk{H}\breaker{H'} + \breaker{H}\kirk{H'}$

\item
$\breaker{H\paral H'} = \breaker{H}\, \breaker{H'}$

\item
$\kirk{H \bolt H'} = \kirk{H}\kirk{H'}$

\item
$\breaker{H \bolt H'} = \breaker{H}\kirk{H'} + \kirk{H}\breaker{H'}$

\item
$\deg(\breaker{H}) = \deg(\kirk{H}) + 1$ and $\deg(\breaker{H}) > 0$.
\end{enumerate}
\end{lemma}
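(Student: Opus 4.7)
The plan is to derive each of parts (a)--(d) directly from the 1- and 2-vertex cut formulas recalled in the preliminaries, together with the observation that $\breaker{G}$ is by definition the Kirchhoff polynomial of the graph obtained from $G$ by identifying its source $s$ with its terminal $t$. Part (e) will follow from a standard degree count.

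First I would handle (a) and (c) by counting how many vertices $H$ and $H'$ share after each operation. In $H \bolt H'$ they share exactly the one vertex $t = s'$, so the 1-vertex cut formula immediately yields $\kirk{H\bolt H'} = \kirk{H}\kirk{H'}$, which is (c). In $H \paral H'$ they share both $s$ and $t$, so the 2-vertex cut formula gives
\[
\kirk{H \paral H'} = \overset{st}{H}\,\kirk{H'} + \kirk{H}\,\overset{st}{H'} = \breaker{H}\kirk{H'} + \kirk{H}\breaker{H'},
\]
which is (a).

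Next, for (b) and (d) I would perform the additional identification $s \sim t$ on each of these source-terminal graphs and recount the shared vertices. In the parallel case the collapse $s \sim t$ merges the two already-shared vertices into one, so $H$ (with $s,t$ identified) and $H'$ (with $s',t'$ identified) now share a single vertex, and the 1-vertex formula yields $\breaker{H \paral H'} = \breaker{H}\,\breaker{H'}$. In the series case the identification $s \sim t'$ adjoins a second shared vertex to the already-shared $t = s'$, so the 2-vertex formula applies at the two shared pairs $\{t=s'\}$ and $\{s=t'\}$ and gives $\breaker{H \bolt H'} = \breaker{H}\kirk{H'} + \kirk{H}\breaker{H'}$.

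For (e) I would use that any spanning tree of a connected graph on $n$ vertices has $n-1$ edges, so every monomial of $\kirk{G}$ has degree $|E(G)| - |V(G)| + 1$. Identifying the two distinct vertices $s$ and $t$ does not change $|E|$ but drops $|V|$ by one, so $\deg(\breaker{H}) = \deg(\kirk{H}) + 1$; since a source-terminal graph is connected with $s \neq t$, we have $|E(H)| \geq |V(H)| - 1$, and hence $\deg(\breaker{H}) \geq 1 > 0$. The main ``obstacle'' here is really just careful bookkeeping of which vertices of $H$ and $H'$ are shared after each combination of a $\paral$ or $\bolt$ with a further $st$-identification; no idea beyond the cut formulas is required.
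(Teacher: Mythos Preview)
Your proposal is correct and follows exactly the approach the paper indicates: the paper does not give a detailed proof but simply states that these identities ``are all reformulations of the 1 and 2 vertex cut formulas for the Kirchhoff polynomial as applied to the series and parallel operations,'' which is precisely what you carry out. Your bookkeeping of which vertices are shared after each operation, and the degree count for (e), correctly fill in the details the paper omits.
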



Any series-parallel graph has a natural recursive structure which we can capture in a tree.
\begin{definition}\label{def upsilon}
For any series-parallel graph $G$ we may associate to it a (not necessarily unique) decomposition tree $\Upsilon$, which is the rooted tree whose leaves represent the edges of $G$ and whose interior vertices represent the operations $\bolt, \paral$ used in the construction of $G$; the root vertex corresponds to the last operation used in the construction. Conversely, any such tree uniquely defines a series-parallel graph. 

The \emph{$\Upsilon$-dual} is the series-parallel graph associated to the decomposition tree $\Upsilon^\vee$ obtained by exchanging every $\bolt$ with a $\paral$ and vice-versa. Finally, $\height(\Upsilon)$ is the height of $\Upsilon$ as a rooted tree.
\end{definition}

\section{Multiple edges}

There are several interesting results concerning parallel edges and condition 1. Aluffi in \cite{Acond} showed what we give as Proposition
\ref{e parellel} 
to prove that the Chern class obeys a multiple edge formula.  Extending beyond his work, Propositions 
\ref{2 imp 1} and 
\ref{2 imp 3} 
imply that where there is a pair of parallel edges, adding a third parallel edge or more has no effect on condition 1, and if there are three or more edges, deleting all of them except two has no effect.  In the context of condition 1 one can look at a multigraph as a simple graph with two types of edges: the single edge and the multiple edge.  


\begin{prop}[Aluffi, 2011]\label{e parellel}
If $ e $ is a regular edge and $e$ has at least one other edge parallel to it in $ G $ then $ 1(G,e) $ is true.  


\end{prop}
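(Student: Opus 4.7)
The plan is to exploit the symmetry of parallel edges to write $\kirk{G}$ in a form where most of it visibly lies in the Jacobian ideal of $\kirk{G}^e$, and to finish off the remaining piece with Euler's theorem.

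Pick any edge $f$ parallel to $e$. Split the spanning trees of $G$ according to whether they contain $e$ only, $f$ only, or neither (they cannot contain both, since $e,f$ form a 2-cycle). The parallel symmetry gives a natural bijection between the first two classes that exchanges $e$ with $f$, so I expect a decomposition
\[
\kirk{G} \;=\; (t_e+t_f)\,A \;+\; t_e t_f\,B,
\]
where $A,B$ are polynomials in the edge variables other than $t_e,t_f$.  From this,
\[
\kirk{G}^e = A + t_f B \qquad\text{and}\qquad \kirk{G}_e = t_f A,
\]
which are the two quantities I need to relate.

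Next I would observe that $B = \partial_f \kirk{G}^e$ automatically lies in $\langle \partial \kirk{G}^e\rangle$. Since $\kirk{G}^e$ is homogeneous, Euler's theorem (Theorem~\ref{thm euler}) gives $\kirk{G}^e \in \langle \partial \kirk{G}^e\rangle$ as well.  Subtracting $t_f B$ from $\kirk{G}^e$ then puts $A$ in the Jacobian ideal, and multiplying by $t_f$ gives $\kirk{G}_e = t_f A \in \langle \partial \kirk{G}^e\rangle$.  Because $e$ is regular, the observation following the definition of condition~1 (contraction-deletion plus Euler) translates $\kirk{G}_e \in \langle \partial \kirk{G}^e\rangle$ into $1(G,e)$.

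There is not really a hard step here; the only thing to be careful about is the decomposition of $\kirk{G}$: one must check that extra parallel edges beyond $e$ and $f$ cause no trouble, which they do not, as they are simply absorbed into the polynomials $A$ and $B$.  The argument also clarifies why a single parallel partner is enough: once one other parallel edge exists, Euler on $\kirk{G}^e$ together with the partial $\partial_f \kirk{G}^e$ suffices to recover $A$ from $A+t_f B$.
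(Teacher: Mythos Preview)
Your argument is correct and is precisely the Euler-based approach the paper alludes to (the paper itself does not spell out a proof, only remarks that Euler's homogeneous function theorem is the main idea and refers to Aluffi). The decomposition $\kirk{G}=(t_e+t_f)A+t_et_fB$ follows exactly as you describe: no spanning tree contains both $e$ and $f$, and the $e\leftrightarrow f$ swap is a graph automorphism, forcing the coefficients of $t_e$ and $t_f$ to agree; from there your use of $\partial_f\kirk{G}^e=B$ and Euler on $\kirk{G}^e$ to recover $A$ is clean and complete.
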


The main idea for the proof of Proposition \ref{e parellel} is Euler's homogeneous function theorem, see \cite{Acond}.

 





The next three propositions have a common set-up. 
\begin{center}
\includegraphics[width=0.44\textwidth]{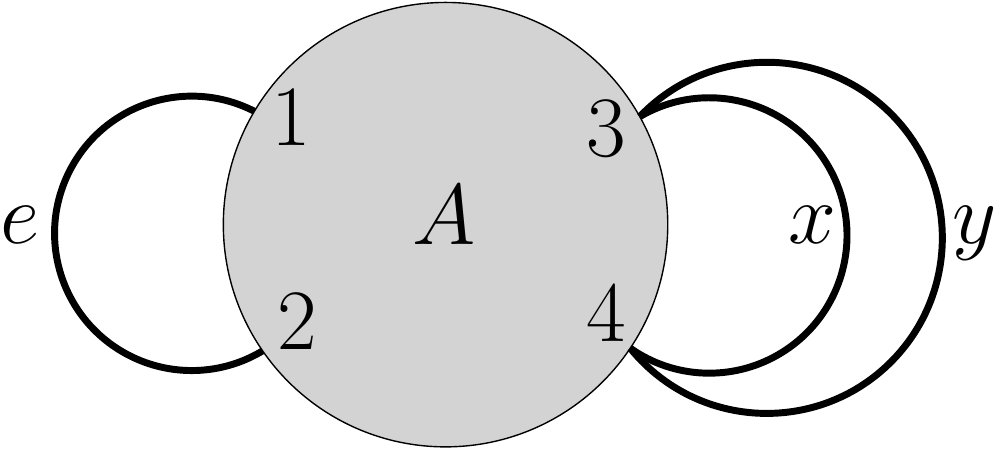}
\end{center}
For a graph with edge $ e $ connected to $ A $ at vertices $ 1, 2 $ and parallel edges $ x, y $ connected to $ A $ at vertices $ 3, 4 $, the condition $ 1(G,e) $ reads as
\begin{equation}\label{parEq}
\begin{aligned}
G_e = xy\Ais + (x+y)\Aisef &= \sum_{i}P_i \left(xy\Aa + (x+y)\Aefa \right) \\
&+ Q(yA+ \Aef) + R(xA+ \Aef).\\
\end{aligned}
\end{equation}
As the ideal generated by $\kirk{G_e}$ is homogeneous we may assume that the $ P_i, Q, R $ are homogeneous degree 2 polynomials in the variables $x, y, a_i$ (cf. \cite[Section 4.2]{fulton1989algebraic}). Let us write these polynomials as the solution set
$$ P_i,\quad Q,\quad R $$
Equation \ref{parEq} has a symmetry which is quite useful.  Let $ \tau_{xy} $ be the operation where $ x $ and $ y $ are swapped.  Applying $ \tau_{xy} $ to both sides gives
\begin{equation}\label{parEqxy}
\begin{aligned}
xy\Ais + (x+y)\Aisef &= \sum_{i}\tau_{xy}P_i \left( xy\Aa + (x+y)\Aefa \right) \\
&+ \tau_{xy}Q(xA+ \Aef) + \tau_{xy}R(yA+ \Aef).\\
\end{aligned}
\end{equation}
The left hand side is invariant under this operation. Thus we can sum equations \ref{parEq} and \ref{parEqxy}, and divide by 2, yielding the polynomials
$$ 
\widetilde{P_i} = \frac{P_i + \tau_{xy}P_i}{2},\quad \widetilde{Q} = \frac{Q + \tau_{xy}R}{2},\quad \widetilde{R} = \frac{R + \tau_{xy}Q}{2}
$$
which satisfy Equation \ref{parEq}. Hence from any given solution set $ P_i, Q, R $, one can construct $ \widetilde{P_i}, \widetilde{Q}, \widetilde{R} $ such that
$$ 
\tau_{xy}\widetilde{P_i} = \widetilde{P_i}, \quad 
\tau_{xy}\widetilde{Q} = \widetilde{R}, \quad
\tau_{xy}\widetilde{R} = \widetilde{Q}
.$$
Focusing only on the $ x,y $ dependence, in order to satisfy these relations and Equation \ref{parEq} the $ \widetilde{P_i}, \widetilde{Q}, \widetilde{R} $ must have the forms
$$
\begin{aligned}
\widetilde{P_i} &= p_{0 i} + (x+y)p_{1 i} + (x^2+y^2)p_{2 i} + xy p_{11 i}\\
\widetilde{Q} &= q_0 + x q_{10} + y q_{01} + x^2 q_{20} + y^2 q_{02} + xy q_{11}\\
\widetilde{R} &= q_0 + x q_{01} + y q_{10} + x^2 q_{02} + y^2 q_{20} + xy q_{11}.\\
\end{aligned}
$$
Substituting these expressions into Equation \ref{parEq} and collecting terms we get the equation:
\begin{center}
\begin{tabular}{l l l l}
$xy\Ais +(x+y)\Aisef =$ & $ xy\Sigma_i p_{0 i} \Aa$			& $+(x+y)q_0 A$					&$+2q_0 \Aef$\\
 			& $+ (x^2y+xy^2)\Sigma_ip_{1 i}\Aa	$			& $+2xy q_{1 0} A$				&$+(x+y) q_{10} \Aef$\\
			& $+ (x^3y + xy^3)\Sigma_ip_{2 i}\Aa$			& $+(x^2+y^2) q_{01} A$ 		&$+(x+y) q_{01} \Aef$\\
 		& $+ x^2y^2\Sigma_ip_{11 i}\Aa$					& $+ (x^2y+xy^2) q_{20} A$ 		&$+ (x^2+y^2) q_{20} \Aef$\\
 		&$+ (x+y)\Sigma_ip_{0 i}\Aefa	$				& $+ (x^3+y^3) q_{02} A $		&$+ (x^2+y^2) q_{02} \Aef$\\
			& $+ (x^2+y^2 + 2xy)\Sigma_ip_{1 i}\Aefa$		& $+ (x^2y+xy^2) q_{11} A $ 	&$+ 2xy q_{1 1} \Aef.$\\
 			& $+ (x^3+y^3+ x^2y+xy^2)\Sigma_ip_{2 i}\Aefa$	&  								&\\
			& $+ (x^2y+xy^2)\Sigma_ip_{11 i}\Aefa$			&  								&\\
\end{tabular}
\end{center}

This is an equation as polynomials in $x$ and $y$, so we can get a list of equations by equating coefficients. This gives
$$
\begin{aligned}
&[x^0y^0]: \quad 0 = 2q_0 \Aef\\
&[x^1y^0]: \quad \Aisef = \Sigma_ip_{0 i}\Aefa + q_0 A + (q_{10}+q_{01})\Aef\\
&[x^1y^1]: \quad \Ais = \Sigma_ip_{0 i}\Aa + 2\Sigma_ip_{1 i}\Aefa + 2q_{10}A+2q_{11}\Aef\\
&[x^2y^0]: \quad 0 = \Sigma_ip_{1 i}\Aefa + q_{01}A + (q_{20}+q_{02})\Aef\\
&[x^3y^0]: \quad 0 = \Sigma_ip_{2 i}\Aefa + q_{02}A\\
&[x^2y^1]: \quad 0 = \Sigma_ip_{1 i}\Aa + \Sigma_ip_{2 i}\Aefa + \Sigma_ip_{11 i}\Aefa + (q_{20}+q_{11})A\\
&[x^3y^1]: \quad 0 = \Sigma_ip_{2 i}\Aa\\
&[x^2y^2]: \quad 0 = \Sigma_ip_{11 i}\Aa.\\
\end{aligned}
$$
 By following the steps in this derivation we can determine a set of polynomials satisfying these eight equations when Equation \ref{parEq} is satisfied.  Conversely, given a solution set for these equations we can obtain a solution set for Equation \ref{parEq}.  In this sense the eight equations above are equivalent to Equation 
\ref{parEq}.  

We take the equation $[x^1y^1]$, and use $ [x^2y^0] $ to substitute out $ \Sigma_ip_{1 i}\Aefa $, and obtain the equation
$$[x^1y^1]^*: \quad \Ais = \Sigma_ip_{0 i}\Aa + 2(q_{10} - q_{01})A+2(q_{11} - q_{20} - q_{02})\Aef.\\$$

As a consequence of the $[x^0y^0]$ equation we have $q_0 = 0 $. (We see $\Aef$ is not zero so $q_0$ must be.)  Hence we can remove the $ q_0 A $ term from the $[x^1y^0]$ equation yielding 
$$[x^1y^0]^*: \quad \Aisef = \Sigma_ip_{0 i}\Aefa + (q_{10}+q_{01})\Aef. $$

These two expressions will be used in the next three proofs.

\begin{prop}\label{contract2}
If $x, y$ are parallel edges then $ 1(G,e) \Rightarrow 1(G^y_x,e)$.  


\end{prop}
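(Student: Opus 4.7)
The plan is to compute $\kirk{G^y_x}$ and $\kirk{G^y_x}^e$ in terms of the subgraph $A$, and then to extract from the eight coefficient equations already derived the exact identity required for $1(G^y_x, e)$.

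First I would observe that, since $x$ and $y$ are parallel edges between vertices $3$ and $4$, deleting $y$ leaves the single edge $x$ between those vertices, and contracting $x$ then identifies $3$ with $4$. Hence $G^y_x$ is the graph consisting of $A$ with vertices $3$ and $4$ identified, together with the edge $e$ between vertices $1$ and $2$. The contraction-deletion relation immediately gives
\[
\kirk{G^y_x}^e = \Aef \quad \text{and} \quad \kirk{G^y_x}_e = \Aisef,
\]
so by Theorem~\ref{thm euler} the condition $1(G^y_x, e)$ is equivalent to the membership $\Aisef \in \langle \partial \Aef \rangle$, where the partials are with respect to the edge variables $a_i$ of $A$.

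Next I would invoke the eight coefficient equations obtained earlier in this section under the assumption $1(G,e)$. Among them, the equation labelled $[x^1 y^0]^*$ reads
\[
\Aisef = \sum_i p_{0 i}\, \Aefa + (q_{10} + q_{01})\, \Aef,
\]
where each $\Aefa$ is the partial derivative of $\Aef$ with respect to the edge variable $a_i$, and the $p_{0i}, q_{10}, q_{01}$ are polynomials in the edge variables of $A$. Each $\Aefa$ lies in $\langle \partial \Aef \rangle$ by definition, and $\Aef$ itself lies in $\langle \partial \Aef \rangle$ by Euler's theorem applied to the homogeneous polynomial $\Aef$. Therefore the right-hand side is in $\langle \partial \Aef \rangle$, and we conclude $\Aisef \in \langle \partial \Aef \rangle$, which is exactly $1(G^y_x, e)$.

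The conceptual work, namely the symmetrization and coefficient extraction, is already done in the setup preceding the proposition; the proposition essentially amounts to recognising that $[x^1 y^0]^*$ is precisely the identity needed. The only point requiring care is the identification of $\Aef$ and $\Aisef$ as the deletion and contraction of $G^y_x$ at $e$, which is a straightforward application of the contraction--deletion relation.
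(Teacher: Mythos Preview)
Your proof is correct and follows essentially the same route as the paper: you identify $G^y_x$ so that $(G^y_x)^e=\Aef$ and $(G^y_x)_e=\Aisef$, then read off the equation $[x^1y^0]^*$ and conclude via Euler's theorem that $\Aisef\in\langle\partial\Aef\rangle$. This is exactly the paper's argument, with a touch more detail in the identification of $G^y_x$.
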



\begin{proof}
Suppose $1(G,e)$.  Then Equation \ref{parEq} has a solution set.  
We look at the $ [x^1y^0]^* $ equation:
$$ 
\Aisef = \Sigma_ip_{0 i}\Aefa + (q_{10}+q_{01})\Aef. 
$$
The left hand side is  $ \Aisef = G^{y}_{x e} $.  
We want to show that it is in the ideal $ \langle \partial G^{ye}_x \rangle $.  
Here $ G^{y e}_x = \Aef $.   
By Theorem 
\ref{thm euler} 
we have $ \Aef \in \langle \partial \Aef \rangle $.  
The $ \Aefa $ are partials of $\Aef$ themselves.
Thus $ \Aisef \in \langle \partial \Aef \rangle $, so $1(G^y_x,e)$ is satisfied.
\end{proof}

\begin{prop}\label{2 imp 1}
If $x, y$ are parallel edges then $ 1(G,e) \Rightarrow 1(G^y,e)$.


\end{prop}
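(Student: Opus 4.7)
The plan is to reuse the coefficient equations set up in the preamble to this proposition. Assuming $1(G,e)$, one has in particular the derived identities
\[
[x^1y^0]^*: \quad \Aisef = \Sigma_i p_{0 i}\Aefa + (q_{10}+q_{01})\Aef
\]
and
\[
[x^1y^1]^*: \quad \Ais = \Sigma_i p_{0 i}\Aa + 2(q_{10}-q_{01})A + 2(q_{11}-q_{20}-q_{02})\Aef.
\]
These are exactly the ingredients I need to control $\Ais$ and $\Aisef$ simultaneously.

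First I would identify the relevant polynomials for $G^y$. Deleting $y$ leaves $x$ as a single edge between vertices $3$ and $4$, so by the 2-cut formula applied to $A$ and the edge $x$:
\[
(G^y)^e = xA + \Aef \qquad \text{and} \qquad (G^y)_e = x\Ais + \Aisef.
\]
The generators of the Jacobian ideal $\langle \partial(G^y)^e\rangle$ are therefore $\partial_x(xA+\Aef) = A$ and $\partial_{a_i}(xA+\Aef) = x\Aa + \Aefa$.

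Next I would multiply $[x^1y^1]^*$ by $x$, add $[x^1y^0]^*$, and group terms to obtain an expression of the shape
\[
x\Ais + \Aisef = \sum_i p_{0i}(x\Aa + \Aefa) + 2x(q_{10}-q_{01})\,A + C(x)\,\Aef,
\]
where $C(x)$ is an explicit polynomial in $x$ and the $q$-coefficients. The first sum is an immediate combination of the generators $\partial_{a_i}(G^y)^e$, and the middle term is a multiple of $\partial_x(G^y)^e$. The one remaining piece to worry about is the $\Aef$ term.

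The key step, and the real obstacle, is recognizing that $\Aef$ itself lies in $\langle \partial(G^y)^e \rangle$, even though it is not literally a partial derivative of $xA+\Aef$. For this I would apply Euler's theorem (Theorem \ref{thm euler}) to the homogeneous polynomial $(G^y)^e = xA + \Aef$:
\[
\deg(xA+\Aef)\cdot (xA+\Aef) = x\cdot A + \sum_i a_i\,(x\Aa + \Aefa),
\]
which expresses $(d_A+1)\Aef$ as a linear combination of the generators (using also that $xA = x\cdot A$ is a multiple of $\partial_x(G^y)^e$). Since we work over $\mathbb{Q}$, this gives $\Aef \in \langle \partial(G^y)^e\rangle$, hence every term in the decomposition of $x\Ais + \Aisef$ lies in the Jacobian ideal, establishing $1(G^y,e)$.
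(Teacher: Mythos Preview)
Your proof is correct and follows essentially the same approach as the paper: form $x[x^1y^1]^* + [x^1y^0]^*$ to express $(G^y)_e = x\Ais + \Aisef$ in terms of $x\Aa+\Aefa$, $A$, and $\Aef$, then observe that all three lie in $\langle \partial(G^y)^e\rangle$. The paper handles the last point marginally more directly by writing $\Aef = (xA+\Aef) - xA$ and invoking the standing fact (from Euler) that $xA+\Aef\in\langle\partial(xA+\Aef)\rangle$, but your explicit Euler computation is equivalent.
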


\begin{proof}
First, note that $ G^{y e}= xA + \Aef $. 
Suppose $1(G,e)$ and create the equation $ x[x^1y^1]^* + [x^1y^0]^*$.
\begin{equation}\label{2eq}
x\Ais + \Aisef = \Sigma_ip_{0 i}(x\Aa + \Aefa) + 2x( q_{10} -  q_{01})A + [2x( q_{11}- q_{20}- q_{02}) + ( q_{10}+ q_{01})]\Aef.
\end{equation}
The left hand side of Equation 
\ref{2eq} 
is $ G^y_e $. $ A$ and $\Aef $ are both in the ideal of partial derivatives of $ (xA + \Aef ) $ because $ A = (\frac{\partial}{\partial x}) (xA + \Aef )$ and $ \Aef = (xA + \Aef ) - xA$. 

Hence $ G^y_e $ is in the ideal $ \langle \partial G^{ye} \rangle $, therefore we have $1(G^y,e)$.
\end{proof}

\begin{prop}\label{2 imp 3}
If $x, y$ are parallel edges and $ G\cup z $ is obtained by adding an edge $z$ parallel to x and y,  then $ 1(G,e) \Rightarrow 1(G\cup z,e)$.


\end{prop}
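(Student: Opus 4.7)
The plan is to mimic the approach of Proposition~\ref{2 imp 1}, extending it from two parallel edges to three. First, by the same spanning-tree analysis used in the preceding set-up (a spanning tree can contain at most one of the three parallel edges $x, y, z$, since any two form a cycle), I would obtain
\[
(G\cup z)_e = xyz\,\Ais + (xy+xz+yz)\Aisef
\]
and
\[
(G\cup z)^e = xyz\,A + (xy+xz+yz)\Aef.
\]

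Assuming $1(G,e)$, the derivation preceding Proposition~\ref{contract2} supplies polynomials $p_{0i}, q_{10}, q_{01}, q_{11}, q_{20}, q_{02}$ in the $a_i$ alone satisfying the equations $[x^1y^1]^*$ and $[x^1y^0]^*$. The key step is to form the weighted combination $xyz\cdot[x^1y^1]^* + (xy+xz+yz)\cdot[x^1y^0]^*$. On the left this produces precisely $(G\cup z)_e$, while on the right it yields
\[
\sum_i p_{0i}\bra{xyz\,\Aa + (xy+xz+yz)\Aefa} + 2xyz(q_{10}-q_{01})A + \alpha\,xyz\,\Aef + \beta\,(xy+xz+yz)\Aef,
\]
where $\alpha = 2(q_{11}-q_{20}-q_{02})$ and $\beta = q_{10}+q_{01}$.

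It then remains to verify that each term on the right lies in $\langle \partial(G\cup z)^e\rangle$. The first term equals $\sum_i p_{0i}\,\partial_{a_i}(G\cup z)^e$ directly. For the $\Aef$ monomials, I would invoke Euler's theorem (Theorem~\ref{thm euler}) to place $(G\cup z)^e$ itself in the Jacobian ideal, and then observe that
\[
(G\cup z)^e - x\,\partial_x(G\cup z)^e = yz\,\Aef,
\]
with the symmetric identities for $y$ and $z$ giving $xz\,\Aef$ and $xy\,\Aef$ in the ideal as well; multiplying by any of $x, y, z$ then places $xyz\,\Aef$ and $(xy+xz+yz)\Aef$ in the ideal. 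Finally, the $xyz\,A$ term is obtained from the rearrangement $xyz\,A = x\,\partial_x(G\cup z)^e - xy\,\Aef - xz\,\Aef$, which is now visibly in the ideal.

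The main obstacle is simply the bookkeeping: keeping track of the many monomials in $x, y, z$ and organizing the combination so that $(G\cup z)_e$ appears on the left. No genuinely new idea beyond those already deployed in Proposition~\ref{2 imp 1} is needed; the trick of extracting the pure $A$-term using $x\partial_x$ together with the $\Aef$-monomial identities is the three-variable analogue of the identity $A = (\partial/\partial x)(xA + \Aef)$ used there.
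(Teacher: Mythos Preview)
Your proposal is correct and follows essentially the same route as the paper: form $xyz\cdot[x^1y^1]^* + (xy+xz+yz)\cdot[x^1y^0]^*$ to produce $(G\cup z)_e$ on the left, and then show that $yz\,\Aef$, $xz\,\Aef$, $xy\,\Aef$, and $xyz\,A$ all lie in $\langle\partial(G\cup z)^e\rangle$ via the identity $(G\cup z)^e - x\,\partial_x(G\cup z)^e = yz\,\Aef$ and its symmetric variants. The only cosmetic difference is that the paper extracts $xyz\,A$ as $(G\cup z)^e - yz\,\Aef - xz\,\Aef - xy\,\Aef$ rather than your $x\,\partial_x(G\cup z)^e - xy\,\Aef - xz\,\Aef$, which is equivalent.
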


\begin{proof}
This proof is very similar to the proof of Proposition \ref{2 imp 1}. We have 
	\[
		(G\cup z)^e = xyzA + (yz+xz+xy)\Aef.  
	\]
The following polynomials are in the ideal $\langle \partial (G\cup z)^e \rangle$:
$$
yz\Aef, \quad xz\Aef, \quad xy\Aef, \quad xyzA.
$$
This is because 
	\[ 
	yz\Aef = (xyzA + (yz+xz+xy)\Aef) - x(\partial/\partial x)(xyzA + (yz+xz+xy)\Aef).
	\] 
A similar argument works for $xz\Aef$, $xy\Aef$. Additionally, we have
	\[ 
	xyzA = (xyzA + (yz+xz+xy)\Aef) - yz\Aef - xz\Aef - xy\Aef.
	\]

Suppose $1(G,e)$. We make the equation $ xyz[x^1y^1]^* + (yz+xz+xy)[x^1y^0]^* $:
$$
\begin{aligned}
xyz\Ais + (yz+xz+xy)\Aisef &= \Sigma_ip_{0 i}(xyz\Aa + (yz+xz+xy)\Aefa) \\
&+ 2xyz (q_{10} - q_{01})A+2xyz(q_{11} - q_{20} - q_{02})\Aef\\
&+ (yz+xz+xy)(q_{10}+q_{01})\Aef.\\
\end{aligned}
$$
It follows that $ 1(G\cup z,e) $.
\end{proof}

\section{Wheel graphs}

We can give a full characterization of which edges of wheel graphs satisfy condition 1.  Specifically for wheels with more than 3 spokes, condition 1 is false for all rim edges and true for all spoke edges.

We need a few lemmas to obtain the results.
The next lemma is closely related to Proposition 
\ref{2edge} 
and says that if two edges form a two edge cut set of $G^e$ then contracting one of them is condition 1 preserving.  
\begin{lemma}\label{lemma r}
If a graph $G$ has the form of the graph on the left below, 

\begin{center}
\includegraphics[width=0.54\textwidth]{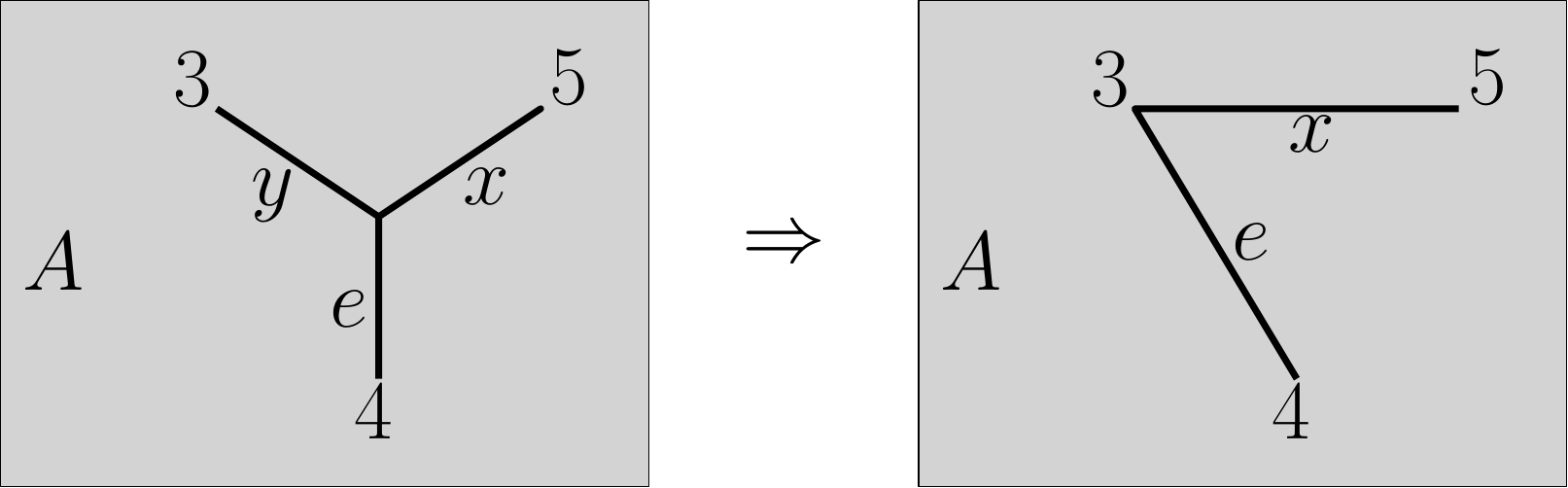}
\end{center}

with the equation 
$$
G = (ey+ex+xy)A + x\Aef +y\Afg + e\Aeg,
$$
then $1(G,e) \Rightarrow 1(G_y,e)$.  
\end{lemma}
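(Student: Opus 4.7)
The plan is to read off from the displayed formula all the relevant partial derivatives and contractions of $G$, invoke the hypothesis $1(G,e)$ to express $G_e$ as an element of $\langle \partial G^e\rangle$, and then specialize by setting $y = 0$ so that the identity descends to an expression for $(G_y)_e$ in $\langle \partial (G_y)^e\rangle$.

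From $G = (ey+ex+xy)A + x\Aef + y\Afg + e\Aeg$, I would first compute $G^e = (x+y)A + \Aeg$. Because $\Aeg$ involves none of the variables $x, y, e$, this forces the collapse $G^{ex} = G^{ey} = A$, while $G^{ea_i} = (x+y)\Aa + \Aega$ for each edge variable $a_i$ of $A$. On the contraction side, $G_e = xyA + x\Aef + y\Afg$, whereas $G_y = exA + x\Aef + e\Aeg$ gives $(G_y)^e = xA + \Aeg$ and $(G_y)_e = x\Aef$, with partials $(G_y)^{ex} = A$ and $(G_y)^{ea_i} = x\Aa + \Aega$.

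Assuming $1(G,e)$, there exist polynomials $P_i, Q, R$ in the edge variables of $G$ with
\[
G_e \;=\; \sum_i P_i\, G^{ea_i} + Q\, G^{ex} + R\, G^{ey} \;=\; \sum_i P_i\bigl((x+y)\Aa + \Aega\bigr) + (Q+R)\, A,
\]
where the last step uses $G^{ex} = G^{ey} = A$. The key move is then to set $y = 0$ throughout. The left-hand side becomes $(G_y)_e = x\Aef$, each $(x+y)\Aa + \Aega$ specializes to $x\Aa + \Aega = (G_y)^{ea_i}$, and the remaining $A$ is exactly $(G_y)^{ex}$. This produces the required expression of $(G_y)_e$ as a combination of partial derivatives of $(G_y)^e$, yielding $1(G_y, e)$.

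There is essentially no obstacle; the entire argument hinges on the fact that $\Aeg$ contains none of $x, y, e$, so the generators $G^{ex}$ and $G^{ey}$ both coincide with $A$ and this coincidence survives the substitution $y = 0$ as the generator $(G_y)^{ex}$ of $\langle \partial (G_y)^e\rangle$. If $\Aeg$ depended on $x$ or $y$, the specialization $y=0$ would no longer carry the generators of one Jacobian ideal into the other, and a more delicate accounting would be required.
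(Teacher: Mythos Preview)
Your proof is correct and follows exactly the paper's approach: write out the condition $1(G,e)$ explicitly in terms of $A$ and the identified-vertex polynomials, then specialize $y=0$ to obtain the corresponding equation for $1(G_y,e)$. In fact your computation of $(G_y)_e = x\Aef$ from the displayed formula is cleaner than the paper's own version, which prints the left-hand side of that equation as $xA + \Aef$ (apparently a typo).
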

Note that here the central vertex connecting $x$, $y$, and $e$ is only connected to the rest of the graph $A$ through $x$, $y$, and $e$.
\begin{proof}
The equation for $ 1(G,e) $ is 
\begin{equation}\label{w1}
xyA + x\Aef + y\Afg = \sum_i P_i ((x+y)\Aa + \Aega) +Q A + R A.
\end{equation}
The equation for $ 1(G_y,e) $ is 
\begin{equation}\label{w2}
xA + \Aef = \sum_i S_i (x\Aa + \Aega) +T A.
\end{equation}
Set $ y=0 $ in Equation \ref{w1}.  Then we have a solution for Equation \ref{w2} by selecting 
$$
S_i = P_i(y=0); \quad T = Q(y=0) + R(y=0).
$$
\end{proof}

\begin{prop}\label{rim prop}
For the wheel graphs with $ n>3 $ sides, condition 1 is false for all rim edges.
\end{prop}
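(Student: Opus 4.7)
The plan is to prove $\neg 1(W_n, e)$ for rim edges by downward induction on $n$, using Lemma \ref{lemma r} together with Proposition \ref{2 imp 1} for the inductive step, and reducing the base case $W_4$ to a direct polynomial calculation. Fix a rim edge $e = v_1 v_2$ of $W_n$ with $n \geq 4$. The endpoint $v_1$ has degree exactly three in $W_n$, with incident edges $e$, the spoke $s_1$, and the adjacent rim edge $r_n = v_n v_1$, all leading to distinct vertices of the subgraph $A := W_n - v_1$. This is precisely the configuration required by Lemma \ref{lemma r} (with $v_1$ playing the central degree-three vertex and $e, s_1, r_n$ in the roles of $e, x, y$), so taking $y = r_n$ yields
\[
1(W_n, e) \;\Longrightarrow\; 1((W_n)_{r_n}, e).
\]

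In the contracted graph $(W_n)_{r_n}$, the identified vertex $v_1 = v_n$ now carries both former spokes $s_1$ and $s_n$ to the hub, so these are parallel edges. Applying Proposition \ref{2 imp 1} to this parallel pair gives $1((W_n)_{r_n}, e) \Rightarrow 1(((W_n)_{r_n})^{s_n}, e)$, and a direct check shows that $((W_n)_{r_n})^{s_n}$ is exactly $W_{n-1}$ with $e$ still a rim edge. Composing the two steps yields the key reduction
\[
1(W_n, e) \;\Longrightarrow\; 1(W_{n-1}, e) \quad \text{for every } n \geq 4,
\]
whose contrapositive $\neg 1(W_{n-1}, e) \Rightarrow \neg 1(W_n, e)$ propagates failure of condition 1 upward once a single base case is established.

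The main obstacle is the base case $W_4$: this chain of reductions cannot be iterated one more step, because it lands at $W_3 = K_4$, and a short hand calculation shows that $1(K_4, e)$ actually \emph{holds} (the Jacobian ideal of $\kirk{K_4}^e$ does contain $\kirk{K_4}_e$). So the induction must genuinely be seeded at $W_4$, and the real content of the proposition is an explicit verification that $\kirk{W_4}_e$ does not lie in the Jacobian ideal of $\kirk{W_4}^e$ (an equivalent form of $1(W_4, e)$ by Euler's theorem and contraction-deletion). The most practical way to produce such an obstruction is to exhibit a single rational point of affine $7$-space at which every partial derivative of $\kirk{W_4}^e$ vanishes but $\kirk{W_4}_e$ is nonzero; any such point certifies that $\kirk{W_4}_e$ is outside the Jacobian ideal.
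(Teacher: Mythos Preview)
Your argument is essentially identical to the paper's: both prove the implication $1(W_n,r)\Rightarrow 1(W_{n-1},r)$ for $n>3$ by first applying Lemma~\ref{lemma r} to contract the rim edge adjacent to $r$ and then applying Proposition~\ref{2 imp 1} to delete one of the two spokes that become parallel, and both take $n=4$ as the base case to be checked directly. Your additional remark that the chain cannot be pushed to $W_3=K_4$ because $1(K_4,e)$ actually holds is correct and explains why the paper stops at $n=4$; your proposed base-case method (finding a rational point in the singular locus of $\kirk{W_4}^e$ where $\kirk{W_4}_e$ is nonzero) is a valid sufficient test, though note it would only detect failure of membership in the radical, so if no such point existed you would still need a Gr\"obner-type computation---but here such a point does exist, so the method succeeds.
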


\begin{proof}
This will be proved by induction.  Let $ W_n $ be a wheel graph with $ n $ sides and let $ r $ be any rim edge.  We will show that for $ n>3 $ we have
\begin{equation}\label{w3}
1(W_n,r) \Rightarrow 1(W_{n-1},r).
\end{equation}
Assuming $ n>3 $ guarantees that $ r $ is a regular edge of $ W_{n-1} $.

\begin{center}
\includegraphics[width=0.84\textwidth]{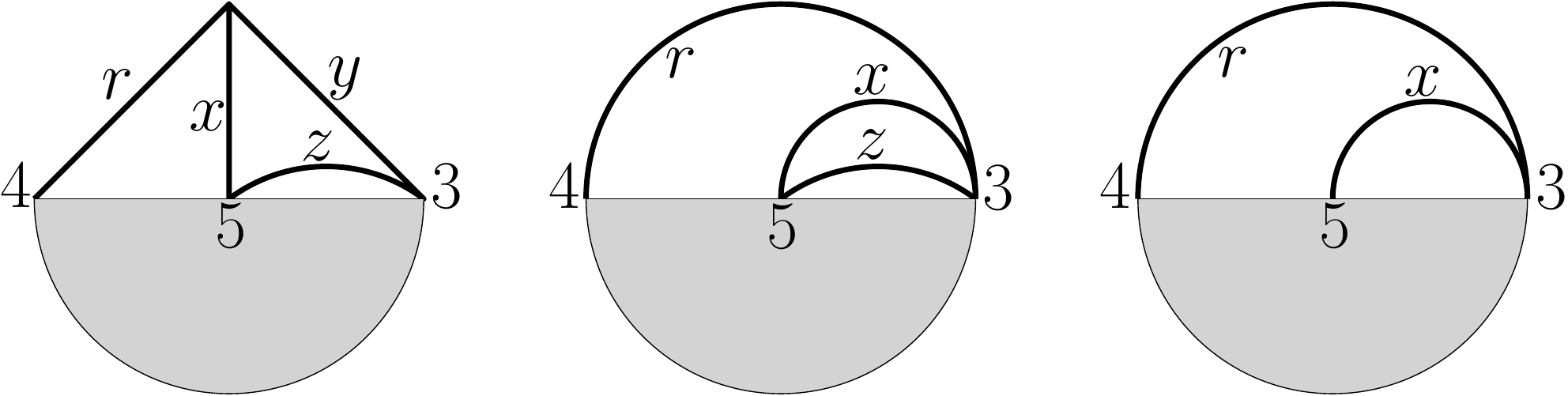}
\end{center}

Now assume $ 1(W_n,r) $.  Then, looking at the diagram above we see that $ r $ satisfies the conditions of Lemma 
\ref{lemma r}, 
so we can contract the rim edge $ y $ beside $ r $, to get $ 1(W_{n \: y},r) $.  

But then $ x $ and $ z $ are parallel in $ W_{n \: y} $, so by Proposition 
\ref{2 imp 1} 
we can delete $ z $ to get $ 1(W_{n \: y}^{ \: z},r) $.  But this is just condition 1 for the wheel graph with one less side, as $ W_{n \: y}^{ \: z} = W_{n-1} $.  Therefore we have proved Equation 
\ref{w3}.  
Then taking the contrapositive,
$$
\neg 1(W_{n-1},r) \Rightarrow \neg 1(W_n,r),
$$
for $ n>3 $.

The base cases can be verified explicitly.
 By induction with $ n = 4 $ as the base case we obtain $ \neg 1(W_n,r)$ for $n>3$.
\end{proof}

\begin{lemma}\label{lemma s}
$ 1(G,e) $ is true for all graphs having the form

\begin{center}
\includegraphics[width=0.24\textwidth]{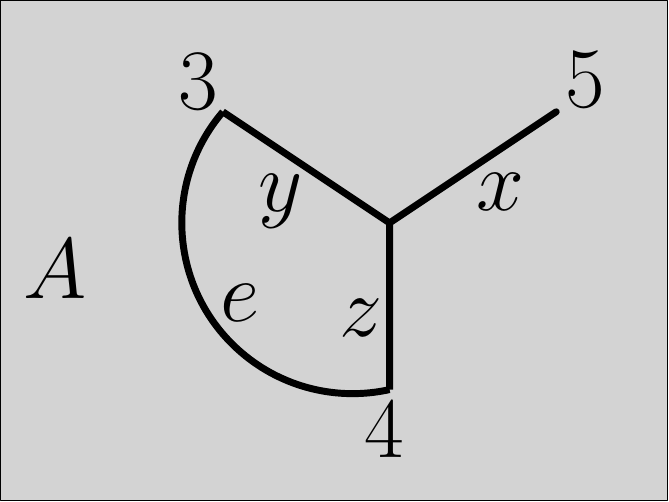}
\end{center}

with the equation 
$$
G = e[(yz+xz+xy)A + x\Aef, + y\Afg + z\Aeg + \Aefg] + (yz+xz+xy)\Aef + (y+z)\Aefg.
$$
\end{lemma}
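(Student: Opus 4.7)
The plan is to exhibit $G_e = (yz+xz+xy)\Aef + (y+z)\Aefg$ as an explicit element of $\langle \partial G^e \rangle$. The first step is to compute the three ``spoke'' partials of $G^e$; since $G^e$ is only quadratic in $x,y,z$, these come out cleanly as $\partial_x G^e = (y+z)A + \Aef$, $\partial_y G^e = (x+z)A + \Afg$, and $\partial_z G^e = (x+y)A + \Aeg$. The crucial observation is that among these three partials, $\Aef$ appears only in $\partial_x G^e$ and $\Aefg$ appears in none; so the $\Aef$-piece of $G_e$ will have to come from a polynomial multiple of $\partial_x G^e$, while the $\Aefg$-piece must come from $G^e$ itself, which is accessible via Euler's theorem.

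Next I will apply Euler's theorem (Theorem~\ref{thm euler}) twice. First, since $G^e$ is homogeneous it belongs to its own Jacobian ideal. Second, the partial Euler sum
\[
F := x\,\partial_x G^e + y\,\partial_y G^e + z\,\partial_z G^e = 2(yz+xz+xy)A + x\Aef + y\Afg + z\Aeg
\]
is tautologically in $\langle \partial G^e \rangle$, and subtracting it from $G^e$ collapses almost everything to give the key auxiliary identity
\[
G^e - F = \Aefg - (yz+xz+xy)A \in \langle \partial G^e \rangle.
\]
This is the desired ``source'' for the $\Aefg$-piece of $G_e$, with only a multiple of $A$ left as a by-product that will need to be cancelled.

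The last step is to combine $(yz+xz+xy)\,\partial_x G^e$, which produces $(yz+xz+xy)\Aef$ together with a spurious $(yz+xz+xy)(y+z)A$ term, with $(y+z)(G^e - F)$, which produces $(y+z)\Aefg$ together with $-(y+z)(yz+xz+xy)A$ and thereby cancels the spurious term. A short bookkeeping check gives
\[
(yz+xz+xy)\,\partial_x G^e + (y+z)(G^e - F) = (yz+xz+xy)\Aef + (y+z)\Aefg = G_e,
\]
and since $\partial_x G^e$, $G^e$, and $F$ all lie in $\langle \partial G^e \rangle$, this certifies $1(G,e)$. In particular, the generators $\partial_{a_i} G^e$ are never required.

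The main difficulty is locating this final combination, but once the auxiliary identity $G^e - F = \Aefg - (yz+xz+xy)A$ is in hand its form is essentially forced: $\Aef$ has to enter via a multiple of $\partial_x G^e$, $\Aefg$ has to enter via a multiple of $G^e - F$ (using $G^e$ alone would drag in $x\Aef + y\Afg + z\Aeg$), and matching the coefficient of $A$ between the two summands pins down the scalar factors $(yz+xz+xy)$ and $(y+z)$.
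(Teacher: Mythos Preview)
Your proof is correct and considerably cleaner than the paper's. The paper proceeds by guessing the coefficients $P_i = \frac{(y+z)a_i}{l-1}$, $Q = yz + \frac{x(y+z)}{l-1}$, $R = -\frac{y(y+z)(l-2)}{l-1}$, $S = -\frac{z(y+z)(l-2)}{l-1}$ (with $l$ the loop number of $G$) and then verifying term by term that the contributions involving $A$ cancel; the authors themselves remark afterward that this cancellation ``seems like something of a minor miracle.'' Your formulation via the auxiliary element $G^e - F = \Aefg - (yz+xz+xy)A$ makes the cancellation transparent: the $A$-terms coming from $(yz+xz+xy)\,\partial_x G^e$ and from $(y+z)(G^e-F)$ are visibly equal with opposite signs. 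In fact, if you expand the $(y+z)G^e$ piece by Euler's theorem your combination produces exactly the paper's coefficients $P_i,Q,R,S$, so the two certificates are literally the same element of $\langle \partial G^e\rangle$; your route simply explains where those coefficients come from rather than guessing them.

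One small caveat: your closing remark that ``the generators $\partial_{a_i} G^e$ are never required'' is not quite right as stated. You invoke $G^e \in \langle \partial G^e\rangle$, and Euler's identity writes $G^e$ as a combination over \emph{all} edge variables, including the $a_i$; indeed $\Aefg$ does not appear in $\partial_x G^e$, $\partial_y G^e$, or $\partial_z G^e$, so those three alone cannot suffice. What is true is that you never need the explicit form of $\partial_{a_i} G^e$ --- they enter only packaged together inside $G^e$.
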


\begin{proof}
The equation $ 1(G,e) $ reads 
	\begin{equation}\label{eq of good}
		\begin{aligned} 
			&(yz+xz+xy)\Aef + (y+z)\Aefg  \\
			= \ &\sum_i P_i [ (yz+xz+xy)\Aa + x\Aefa + y\Afga + z\Aega + \Aefga ] \\
			&+ Q[(y+z)A + \Aef]
			+ R[(x+z)A + \Afg]
			+ S[(x+y)A + \Aeg].
		\end{aligned} 
	\end{equation}
Now we will make use of Proposition 
\ref{thm euler} 
to try to guess what the solution polynomials to Equation 
\ref{eq of good} 
are.  

The $ P_i $ are the logical polynomials to guess first because $ \Aefg $ occurs with them and nowhere else on the right hand side of Equation 
\ref{eq of good}.  
Then to equate the $ \Aefg $ terms, we will try 
$$
P_i = \frac{(y+z) a_i}{l - 1}
$$
The loop number\footnote{In graph theory language the loop number of a graph is the dimension of the cycle space of the graph; the term `loop number' comes from physics.} of $ G $, which we denote by $l$, is chosen as a reference value. Here, $A$ (with 4 fewer edges and 1 fewer vertex than $G$) has loop number $ l - 3 $, $ \Aef $ (with $1$ fewer vertex than $A$) has loop number $ l - 2 $, and $ \Aefg $ has loop number $ l - 1 $.

With this choice of the $ P_i $, the $ \sum_i P_i [...] $ in Equation 
\ref{eq of good} 
evaluates to 
$$
\frac{y+z}{l-1}[ (yz+xz+xy)(l-3)A + x(l-2)\Aef + y(l-2)\Afg + z(l-2)\Aeg + (l-1)\Aefg ].
$$

Next, we show that the $ \Aef $ terms work out.  Notice that 
$$
\frac{l-2}{l-1} = 1 - \frac{1}{l-1}.
$$
Then we endeavour to satisfy 
$$
(yz+xz+xy)\Aef = x(y+z)\left( 1 - \frac{1}{l-1}\right)\Aef + Q \Aef,
$$
which can be accomplished by choosing
$$
Q = yz + \frac{x(y+z)}{l-1}.
$$

Finally, to make $ \Afg $ and $ \Aeg $ work out we choose
$$
R = \frac{-y(y+z)(l-2)}{l-1}; \quad S = \frac{-z(y+z)(l-2)}{l-1}.
$$

Then with these choices, by construction, all of the terms involving $ \Aef, \Afg, \Aeg $, and $ \Aefg $ equate on both sides of $ P_i, Q, R, S $.  
Finally, then, as the reader can verify, the terms involving $A$ all cancel which gives a valid solution set $P_i, Q, R, S$.



\end{proof}

\begin{prop}\label{spoke prop}
For the wheel graphs with $ n\geq 3 $ sides, condition 1 is true for all spoke edges.
\end{prop}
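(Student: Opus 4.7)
The plan is to verify, for each spoke of $W_n$, that $W_n$ together with that edge fits the template of Lemma \ref{lemma s}. By the rotational symmetry of the wheel it suffices to handle a single spoke, so I will fix $e = s_1$, joining the hub $h$ to the rim vertex $r_1$.

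I will take $v := r_2$, the rim vertex adjacent to $r_1$. Since $n \geq 3$, the vertex $v$ has degree exactly $3$ in $W_n$, with incident edges the spoke $y := s_2$ (running to $h$), the rim edge $z := t_1$ (running to $r_1$), and the rim edge $x := t_2$ (running to $r_3$). Let $A$ denote the subgraph of $W_n$ obtained by deleting $v$ together with the four edges $e, x, y, z$. I will label the distinguished vertices of $A$ as $3 := h$, $4 := r_1$, $5 := r_3$. Then $x$, $y$, $z$ join $v$ to vertices $5$, $3$, $4$ of $A$ respectively, while $e$ joins vertex $3$ to vertex $4$ of $A$. This is exactly the configuration described in Lemma \ref{lemma s}: the external vertex $v$ is attached to $A$ by the three-edge fan $\{x,y,z\}$ and the edge $e$ runs internally between vertices $3$ and $4$ of $A$. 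Consequently the displayed equation of Lemma \ref{lemma s} holds for $W_n$ by the $1$- and $2$-vertex cut formulas for the Kirchhoff polynomial (Lemma \ref{Psi-P identities}), and so Lemma \ref{lemma s} delivers $1(W_n, e)$.

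I do not anticipate a serious obstacle. The only points to check are that the three distinguished vertices $h, r_1, r_3$ of $A$ are distinct and that $A$ is connected, both of which are immediate when $n \geq 3$: $A$ contains the spokes $s_3, \ldots, s_n$ emanating from $h$ together with the surviving rim edges $t_3, \ldots, t_n$, so $r_1$ reconnects to the hub through the rim edge $t_n$ followed by the spoke $s_n$. The degenerate case $n = 3$ produces the minimal $A$, namely the two-edge path $h \text{--} r_3 \text{--} r_1$, and Lemma \ref{lemma s} still applies directly, so no separate base case is needed.
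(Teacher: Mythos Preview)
Your proof is correct and follows the same approach as the paper: both apply Lemma~\ref{lemma s} directly to the wheel, and you have simply written out explicitly the edge/vertex labeling that the paper leaves to its figure. One tiny remark: the displayed identity in Lemma~\ref{lemma s} is part of that lemma's setup rather than something you need to justify separately, so the appeal to Lemma~\ref{Psi-P identities} (which is about series--parallel joins) is unnecessary---matching the picture is enough.
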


\begin{proof}
This is an immediate consequence of Lemma 
\ref{lemma s}.  
Let $ s $ be a spoke edge of $ W_n $.  

\begin{center}
\includegraphics[width=0.24\textwidth]{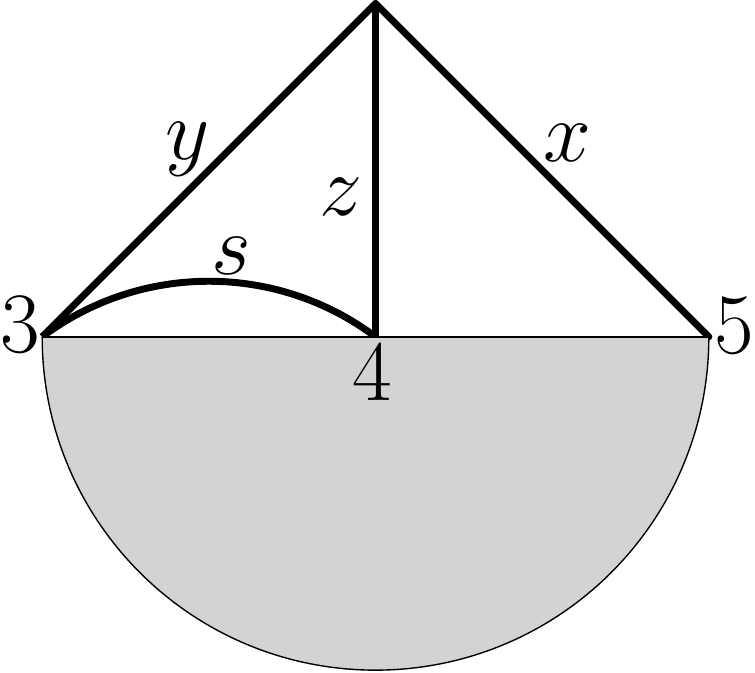}
\end{center}

With the edges as labelled we see that $ (W_n, s) $ has the form specified by the lemma, therefore $ 1(W_n, s) $ is true.
\end{proof}

The cancellation of terms involving $A$ in Lemma~\ref{lemma s} seems like something of a minor miracle.  It is suggestive that making a $\Delta$ to $Y$ transformation in the graph ought to preserve condition 1.  In fact this possibility is what suggested Lemma~\ref{lemma s} to us.  See Section~\ref{sec conclusion} for further discussion of this point.

 




\section{Simultaneous combinations in series-parallel graphs}

Condition 1 itself is not ideally suited to the recursive constructions involved in building series parallel graphs.  Specifically, one frequently wants to combine expansions of the polynomials $\kirk{G}$ and $\breaker{G}$ but lacks any information on how the coefficient polynomials relate.  To work around this we first consider a stronger condition where the coefficients are controlled.

\begin{definition} \label{simultaneous combination} 
	Let $G$ be a source-terminal (series-parallel) graph and $e_1 \in G$ an edge. We say that \emph{simultaneous combination} holds for $(G,e_1)$, or $S(G,e_1)$ holds, if there are polynomials $A_j,B,C$ such that
	\allstar{
		\kirk{G} &= \sum A_j \dkirk{G}{1j} + B \cdot\dkirk{G}{1} \\
		\breaker{G} &= \sum A_j \dbreaker{G}{1j} + C \cdot \dbreaker{G}{1}
	}
	We say that \emph{simultaneous combination} holds for $G$, or $S(G)$ holds, if $S(G,e)$ holds for all $e \in G$.
\end{definition}

Note that if $S(G,e)$ holds, then by Euler's Theorem we can choose either $B = 0$ or $C = 0$ in the statement of Definition~\ref{simultaneous combination}.  The freedom to choose which is $0$ is quite handy and explains why this symmetric, albeit redundant, definition was chosen.

\begin{prop}
	$S(G,e)$ implies condition 1 holds for $e \in G$.
\end{prop}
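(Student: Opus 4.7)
The plan is to use only the first equation supplied by $S(G,e_1)$, namely
\[
\kirk{G} = \sum_j A_j \dkirk{G}{1j} + B\cdot \dkirk{G}{1},
\]
and show that every term on the right-hand side belongs to the Jacobian ideal $\langle \partial \dkirk{G}{1}\rangle$. The second equation involving $\breaker{G}$ plays no role in this implication; it carries the extra information that will be essential for the subsequent stability under parallel joins (Corollary~\ref{stability cor}).

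Two observations suffice. First, for each index $j$ one has $\dkirk{G}{1j} = \partial_{e_j}\dkirk{G}{1}$, so $\dkirk{G}{1j}$ is literally a generator of $\langle \partial \dkirk{G}{1}\rangle$; hence each product $A_j \dkirk{G}{1j}$ lies in the ideal. Second, to absorb the term $B\cdot \dkirk{G}{1}$, I would apply Euler's Theorem (Theorem~\ref{thm euler}) to the homogeneous polynomial $\dkirk{G}{1}$, which gives
\[
\dkirk{G}{1} = \frac{1}{\deg \dkirk{G}{1}}\sum_{e\neq e_1} t_e\, \partial_e \dkirk{G}{1} \in \langle \partial \dkirk{G}{1}\rangle.
\]
Summing these contributions places $\kirk{G}$ inside $\langle \partial \dkirk{G}{1}\rangle$, which is exactly condition $1$ for $(G,e_1)$.

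There is essentially no obstacle. The only point of care is that the Euler step requires $\dkirk{G}{1}$ to be homogeneous of positive degree, i.e.\ $e_1$ to be a regular edge; the non-regular cases catalogued in the introduction (self-loop, bridge, or $G\smallsetminus e_1$ a tree) are handled by those elementary remarks and do not interact with $S$ in a substantive way. The payoff of this short proposition is conceptual: it confirms that the more rigid condition $S(G,e)$, which also pins down a compatible expansion of $\breaker{G}$, is genuinely a strengthening of condition $1$, so that later results proving $S$ recursively will in particular prove condition $1$.
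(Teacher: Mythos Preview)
Your argument is correct and is exactly the unpacking of what the paper records as ``Immediate from the definitions.'' You use the first equation of $S(G,e_1)$ together with the two obvious facts---that each $\dkirk{G}{1j}$ is a generator of $\langle \partial \dkirk{G}{1}\rangle$ and that $\dkirk{G}{1}$ itself lies in its Jacobian ideal by Euler's Theorem---which is precisely why the implication is immediate; your remark on regularity of $e_1$ is a fair caveat but not something the paper pauses over.
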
 

\begin{proof}
	Immediate from the definitions.
\end{proof}

Simultaneous combinations are well behaved with respect to both series and parallel operations.
\begin{lemma} \label{S-lift}
	Let $H,H'$ be source-terminal graphs and let $e \in H$. If $S(H,e)$ then $S(H\paral H',e)$ and $S(H \bolt H',e)$.
\end{lemma}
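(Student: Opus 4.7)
The plan is to fix coefficients $A_j, B, C$ witnessing $S(H,e)$, so that
\[
\kirk{H} = \sum_{j} A_j \kirk{H}^{ej} + B\,\kirk{H}^e \qquad\text{and}\qquad \breaker{H} = \sum_{j} A_j \breaker{H}^{ej} + C\,\breaker{H}^e,
\]
with the sums running over $j \in E(H)\setminus\{e\}$. To treat $G := H \paral H'$ and $G := H \bolt H'$ I would expand every Kirchhoff and second-Symanzik polynomial appearing in the desired identities for $S(G,e)$ using Lemma~\ref{Psi-P identities}; since $e \in H$, each of the polynomials $\kirk{G}^e, \breaker{G}^e, \kirk{G}^{ej}, \breaker{G}^{ej}$ becomes an explicit expression in $\kirk{H}^e, \breaker{H}^e, \kirk{H}^{ej}, \breaker{H}^{ej}, \kirk{H'}, \breaker{H'}$ and, for $j \in E(H')$, in $\kirk{H'}^j, \breaker{H'}^j$.

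For the new coefficients I would try $A'_j := A_j$ for $j \in E(H)\setminus\{e\}$. Applying $S(H,e)$ then collapses $\sum_{j\in H} A_j \kirk{H}^{ej}$ into $\kirk{H} - B\,\kirk{H}^e$ and $\sum_{j\in H} A_j \breaker{H}^{ej}$ into $\breaker{H} - C\,\breaker{H}^e$; the obstruction is that the residual terms $B\,\kirk{H}^e$ and $C\,\breaker{H}^e$ will not combine into scalar multiples of $\kirk{G}^e$ and $\breaker{G}^e$ unless $B = C$. To absorb this mismatch I would apply Euler's theorem (Theorem~\ref{thm euler}) to the homogeneous polynomials $\kirk{H'}$ and $\breaker{H'}$, whose degrees differ by $1$ by Lemma~\ref{Psi-P identities}(e), setting $A'_j := (B-C)\,t_j$ for each $j \in E(H')$. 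This produces the correct scalar multiples of $\kirk{H'}$ and $\breaker{H'}$ on the right-hand sides and leaves $B'$ and $C'$ determined by a small linear system in $B$, $C$ and $\deg \kirk{H'}$, which is easily solvable over $\mathbb{Q}$.

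The verification is then a direct computation: substitute the proposed $A'_j$, $B'$, $C'$ into both target identities, expand via Lemma~\ref{Psi-P identities}, and match the coefficients of $\kirk{H}^e\,\breaker{H'}$ and $\breaker{H}^e\,\kirk{H'}$ -- this is where $B'$ and $C'$ get pinned down. The series case $G = H \bolt H'$ runs identically, with the roles of $\kirk{H'}$ and $\breaker{H'}$ swapped by the product formulas $\kirk{H\bolt H'} = \kirk{H}\kirk{H'}$ and $\breaker{H\bolt H'} = \breaker{H}\kirk{H'} + \kirk{H}\breaker{H'}$. I expect the hardest part to be bookkeeping rather than any genuine difficulty: one must track which copies of $\kirk{H'}, \breaker{H'}$ come from the parallel/series product formulas versus those manufactured by Euler's identity, and confirm that the single common choice of $A'_j$ handles both target identities at once -- which is precisely the \emph{simultaneous} content of Definition~\ref{simultaneous combination}.
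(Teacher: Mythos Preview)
Your approach is correct and in fact slightly cleaner than the paper's. The paper first invokes the freedom noted after Definition~\ref{simultaneous combination} to set $B=0$ (for the parallel case) or $C=0$ (for the series case), and then manufactures the needed product terms such as $\breaker{H}^{e}\kirk{H'}$ by taking weighted Euler sums over \emph{both} $H$ and $H'$ (with weights depending on $n=\deg\breaker{H}$ and $m=\deg\breaker{H'}$), solving a small $2\times 2$ system to isolate the right piece. You instead keep both $B$ and $C$, leave the $A_j$ on $H$ untouched, and use a single Euler sum over $H'$ with coefficient $(B-C)\,t_j$. The point that makes your route work is exactly the one you flag: because $\deg\breaker{H'}=\deg\kirk{H'}+1$, the two constraints you get from matching coefficients of $\kirk{H}^{e}\breaker{H'}$ and $\breaker{H}^{e}\kirk{H'}$ collapse to the \emph{same} equation for $B'$ (and likewise for $C'$ in the series case), so a single common choice of $A'_j$ really does handle both identities simultaneously. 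The paper's version has the mild advantage that setting one of $B,C$ to zero up front keeps the formulas for the new $B',C'$ a bit more symmetric in $n$ and $m$; yours has the advantage of touching only the $H'$-coordinates and avoiding the $2\times 2$ inversion entirely.
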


The basic plan is to put the required linear combinations together using Euler's theorem.  This technique will be a theme in what follows, but since this is the first such argument we will go into detail.

\begin{proof}[Proof for $S(H\paral H',e)$]
	Let $e = e_1$, let $n := \deg(\breaker{H})$ , let $m := \deg(\breaker{H'})$, and let $G := H\paral H'$. Then
	\allstar{
		\kirk{G} &= \kirk{H}\breaker{H'} + \breaker{H} \kirk{H'} \\
		\breaker{G} &= \breaker{H}\,\breaker{H'}
	}
	so
	\allstar{
		\gen{\dkirk{G}{1j}} &= \gen{ \dkirk{H}{1j} \breaker{H'} + \dbreaker{H}{1j} \kirk{H'}, \dkirk{H}{1} \dbreaker{H'}{j} + \dbreaker{H}{1} \dkirk{H'}{j}} \\
		\gen{\dbreaker{G}{1j}} &= \gen{\dbreaker{H}{1j}\breaker{H'}, \dbreaker{H}{1} \dbreaker{H'}{j}}.
	}
	
	We note that $\deg(\breaker{H}) = \deg(\kirk{H}) + 1$, so by Euler's theorem the components of the vector
	\[
		\bbm
			(n-2) & (n-1) \\
			m & (m-1)
		\ebm
		\bbm
			\dkirk{H}{1} \breaker{H'} \\
			\dbreaker{H}{1}  \kirk{H'}
		\ebm
	\]
	are in $\gen{\dkirk{G}{1j}}$. Thus so is
	\[
		m\sum_{j \in H} x_j \dkirk{G}{1j} - (n-2)\sum_{j \in H'} x_j \dkirk{G}{1j} = (n+m-2) \dbreaker{H}{1}\kirk{H'}.
	\]
	We also have that
	\allstar{
		m\sum_{j \in H} x_j \dbreaker{G}{1j} - (n-2)\sum_{j \in H'} x_j \dbreaker{G}{1j} &= m(n-1) \dbreaker{H}{1} \breaker{H'} - (n-2)m \dbreaker{H}{1} \breaker{H'} \\
		&= m \dbreaker{H}{1} \breaker{H'}.
	}
	For $j \in H$ pick $A_j,C$ as in the statement of $S(H,e_1)$ (using the previously observed freedom to set $B=0$). Then for
	\[
		B_j := \piecewise{A_j + \frac{C}{n+m-2}\bra{mx_j}}{j \in H}{-\frac{(n-2)C}{n+m-2}x_j} 
	\]
	\[
		C' := \bra{1-\frac{m}{n+m-2}}C
	\]
	we verify that we have $S(H\paral H',e)$.
	\allstar{
		\sum_{j \in G} B_j \dkirk{G}{1j} = &\sum_{j \in H}\bra{A_j + \frac{C}{n+m-2}\bra{mx_j}}\dkirk{G}{1j} 
		+ \sum_{j \in H'} \bra{-\frac{(n-2)C}{n+m-2}x_j} \dkirk{G}{1j} \\
		= &\sum_{j \in H}A_j  \dkirk{G}{1j} +  \sum_{j \in H} \frac{C}{n+m-2}\bra{mx_j} \dkirk{G}{1j} - \sum_{j \in H'}\frac{(n-2)C}{n+m-2}x_j \dkirk{G}{1j} \\ 
		= &\sum_{j \in H}A_j  \dkirk{G}{1j} +  \frac{C}{n+m-2} \bra{ m \sum_{j \in H} x_j \dkirk{G}{1j} - (n-2)\sum_{j \in H'}x_j \dkirk{G}{1j}} \\
		= &\sum_{j \in H}A_j  \bra{\dkirk{H}{1j} \breaker{H'} + \dbreaker{H}{1j} \kirk{H'}} + C \cdot \dbreaker{H}{1} \kirk{H'}.
	}
	By $S(H,e_1)$ we can simplify the remaining expression
	\allstar{
		= & \sum_{j \in H}A_j\dkirk{H}{1j}\breaker{H'} +  \sum_{j \in H}A_j \dbreaker{H}{1j} \kirk{H'} + C \cdot \dbreaker{H}{1} \kirk{H'} \\
		= & \ \kirk{H}\breaker{H'} + \kirk{H'}(\breaker{H} - C \dbreaker{H}{1}) + C \cdot \dbreaker{H}{1} \kirk{H'} \\
		= & \ \kirk{G}.
	}
	
	\bigskip \noindent
	Meanwhile
	\allstar{
		\sum_{j \in G} B_j \dbreaker{G}{1j} = &\sum_{j \in H}\bra{A_j + \frac{C}{n+m-2}\bra{mx_j}}\dbreaker{G}{1j} 
		+ \sum_{j \in H'} \bra{-\frac{(n-2)C}{n+m-2}x_j} \dbreaker{G}{1j} \\
		= &\sum_{j \in H}A_j  \dbreaker{G}{1j} +  \sum_{j \in H} \frac{C}{n+m-2}\bra{mx_j} \dbreaker{G}{1j} - \sum_{j \in H'}\frac{(n-2)C}{n+m-2}x_j \dbreaker{G}{1j} \\ 
		= &\sum_{j \in H}A_j  \dbreaker{G}{1j} +  \frac{C}{n+m-2} \bra{ m \sum_{j \in H} x_j \dbreaker{G}{1j} - (n-2)\sum_{j \in H'}x_j \dbreaker{G}{1j}} \\
		= &\sum_{j \in H}A_j  \bra{\dbreaker{H}{1j} \breaker{H'}} + \frac{C}{n+m-2} \cdot m \dbreaker{H}{1} \breaker{H'}.
	}
	By $S(H,e_1)$ we can simplify the remaining expression
	\allstar{
		=  & \breaker{H'} (\breaker{H} - C \dbreaker{H}{1}) + \frac{mC}{n+m-2} \cdot  \dbreaker{H}{1} \breaker{H'} \\
		= & \ \breaker{G} - \bra{1-\frac{m}{n+m-2}}C \cdot  \dbreaker{G}{1}.
	} 
	So $S(G,e_1)$ holds.
\end{proof}

\begin{proof}[Proof for $S(H \bolt H',e)$]
	We use a similar argument for the series join. Let $e = e_1$, let $n := \deg{\breaker{H}}$, let $m := \deg{\breaker{H'}}$, and let $G := H \bolt H'$. Then
	\allstar{
		\kirk{G} &= \kirk{H} \kirk{H'} \\
		\breaker{G} &= \breaker{H}\kirk{H'} + \kirk{H} \breaker{H'}
	}
	so
	\allstar{
		\gen{\dkirk{G}{1j}} &= \gen{\dkirk{H}{1j}\kirk{H'}, \dkirk{H}{1} \dkirk{H'}{j}} \\
		\gen{\dbreaker{G}{1j}} &= \gen{ \dbreaker{H}{1j} \kirk{H'} + \dkirk{H}{1j} \breaker{H'}, \dbreaker{H}{1}\dkirk{H'}{j} + \dkirk{H}{1} \dbreaker{H'}{j}}.
	}
	We note that $\deg(\breaker{H}) = \deg(\kirk{H}) + 1$, so by Euler's theorem the coordinates of
	\[
		\bbm
			(n-1) & (n-2) \\
			(m-1) & m
		\ebm
		\bbm
			\dbreaker{H}{1} \kirk{H'} \\
			\dkirk{H}{1}  \breaker{H'}
		\ebm
	\]
	are in $\gen{\dbreaker{G}{1j}}$. Thus so is
	\[
		-(m-1)\sum_{j \in H} x_j \dbreaker{G}{1j} + (n-1)\sum_{j \in H'} x_j \dbreaker{G}{1j} = (n+m-2) \dkirk{H}{1}\breaker{H'}.
	\]
	
	We also have that
	\allstar{
		-(m-1)\sum_{j \in H} x_j \dkirk{G}{1j} + (n-1)\sum_{j \in H'} x_j \dkirk{G}{1j} &= -(m-1)(n-2) \dkirk{H}{1} \kirk{H'} + (n-1)(m-2) \dkirk{H}{1} \kirk{H'} \\
		&= (m-1)\dkirk{H}{1}\kirk{H'}.
	}
	\bigskip
	
	For $j \in H$ pick $A_j,B$ as in the statement of $S(H,e_1)$, this time suppressing $C=0$. Then for
	\allstar{
		A_j' &:= \piecewise{A_j - \frac{(m-1)B}{n+m-2}x_j}{j \in H}{\frac{(n-1)B}{n+m-2}x_j} \\
		B' &:= \bra{1-\frac{m-1}{n+m-2}}B
	}
	we can verify that we have $S(H \bolt H',e)$ in exactly the same way as before.
	
\end{proof}

\begin{figure}[h]
	\centering
	\includegraphics{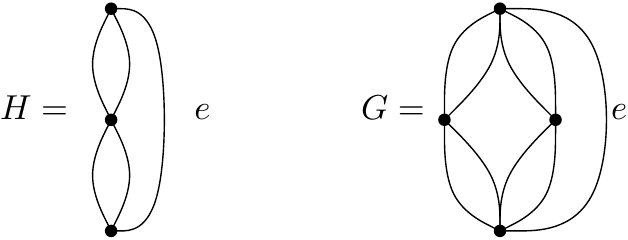}
	\caption{Graphs illustrating the gap between condition 1 and $S$.}
	\label{counterexample}
\end{figure}

Simultaneous combination is stronger than condition 1.  For example, let $G$ and $H$ be as in Figure~\ref{counterexample}, then one can check that $1(H, e)$ is true but $S(H, e)$ is false. However, in some sense simultaneous combination classifies when condition 1 is stable under parallel join. In the example of Figure~\ref{counterexample} we see $G$ is an extension of $H$ by a parallel join but $1(G,e)$ is false.  This is the content of Corollary~\ref{stability cor}.

First we need a few observations on factorizations in Kirchhoff polynomials. Recall that a biconnected component of a graph $G$ is a maximal connected subgraph of $G$ that has no cut vertex. The terms block and biconnected component are synonymous.

\begin{lemma} \label{lem: kirchhoff factors}
	The non-trivial factors of the Kirchhoff polynomial of a loopless graph $G$ correspond to the biconnected components of $G$ which are not isomorphic to $K_2$.
\end{lemma}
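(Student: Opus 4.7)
The plan is to split the lemma into two independent claims: that $\kirk{G}$ factors across the blocks (biconnected components) of $G$, and that for any block $B\not\cong K_2$ the polynomial $\kirk{B}$ is irreducible. The first claim follows from the 1-vertex cut formula $\kirk{H\cup H'}=\kirk{H}\kirk{H'}$ recalled in Section~2, applied inductively to the block-cut tree, together with the observation $\kirk{K_2}=1$. Hence bridges contribute only unit factors and the non-trivial factors correspond exactly to the non-$K_2$ blocks (and the loopless hypothesis rules out the unit-but-loopy contributions $t_e$ that a self-loop would introduce).

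For the irreducibility of $\kirk{B}$, I would argue as follows. Assume $\kirk{B}=pq$ with both $p$ and $q$ non-constant. Because each monomial of $\kirk{B}$ is a product of variables of edges \emph{not} in a spanning tree, $\kirk{B}$ is multilinear, so every variable $t_e$ appears in exactly one of $p,q$, producing a partition $E(B)=E_p\sqcup E_q$ into non-empty sets. Since the variable sets of $p$ and $q$ are disjoint, no monomial of $pq$ cancels, and the set of complements of spanning trees is exactly the ``product family''
\[
\{S_p\cup S_q : S_p\in\mathrm{supp}(p),\ S_q\in\mathrm{supp}(q)\}.
\]
Complementing, the bases of the graphic matroid $M(B)$ are closed under the exchange $(T_1,T_2)\mapsto(T_1\cap E_p)\cup(T_2\cap E_q)$. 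A short count then gives that $|T\cap E_p|$ is constant across spanning trees, and an extend-to-a-basis argument shows $T\cap E_p$ is always a maximal forest of $(V(B),E_p)$; together these conditions upgrade the factorization to a matroid direct sum $M(B)=M(B)|_{E_p}\oplus M(B)|_{E_q}$.

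To finish, invoke the standard fact that in a loopless biconnected graph on at least three vertices any two edges lie on a common cycle. Picking $e_p\in E_p$ and $e_q\in E_q$ yields a cycle, i.e.\ a circuit of $M(B)$, that crosses the partition. But circuits of a direct sum matroid are contained in a single summand, contradiction. Hence $\kirk{B}$ is irreducible and the lemma follows. The main obstacle is the middle paragraph: carefully passing from the multilinear variable partition to a bona fide matroid direct sum. The steps before and after (block factorization and the circuit-versus-direct-sum contradiction) are routine once that bridge is built.
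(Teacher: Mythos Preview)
Your argument is correct and takes a genuinely different route from the paper's. Both proofs share the first half (block factorization via the 1-cut formula) and both begin the irreducibility half by observing that multilinearity forces a non-trivial factorization $\kirk{B}=pq$ to partition the edge set as $E_p\sqcup E_q$. From there the paper argues directly with spanning trees: it picks spanning forests of the connected components of the two edge-coloured subgraphs, shows each such forest must appear as the $E_p$-part (respectively $E_q$-part) of some monomial of $p$ (respectively $q$), and then observes that the product monomial forces the union of these forests to be acyclic, so the bipartite ``component incidence'' graph is a tree and $B$ has a cut vertex. Your approach instead packages the product-of-supports observation as a matroid direct sum $M(B)=M(B)|_{E_p}\oplus M(B)|_{E_q}$ and finishes with the standard 2-connectivity fact that any two edges share a cycle, contradicting the circuit structure of a direct sum. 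The paper's proof is a bit more self-contained (no matroid vocabulary), while yours is really the statement that the cycle matroid of a 2-connected graph is a connected matroid, which situates the lemma in a well-known framework and would generalize cleanly. One small quibble: your invocation of ``biconnected on at least three vertices'' misses the 2-vertex multi-edge blocks, but there any two parallel edges already form a 2-cycle, so the same contradiction goes through; you should phrase the hypothesis as $B\not\cong K_2$ rather than $|V(B)|\ge 3$.
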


Note that a tree has only copies of $K_2$ as biconnected components corresponding correctly to the Kirchhoff polynomial being $1$.  Also, the result can easily be extended to graphs with self-loops by noting that a self-loop contributes a factor of its variable and any such factors arise in this manner.

\begin{proof}
	
First, note that spanning trees in the different biconnected components meet only at cut vertices, so they are independent.  Thus the Kirchhoff polynomial of $G$ is the product of the Kirchhoff polynomials of these components.
	
	Next we need to show that there are no other factors.  Suppose $G$ has no cut vertices but assume for a contradiction that $\kirk{G} = P \cdot Q$ for some non-constant $P$ and $Q$. Note, a priori, that we do not know if $P,Q$ are Kirchhoff polynomials of a graph. The Kirchhoff polynomial $\kirk{G}$ is linear in each of the edge variables, so any factorization gives a partition of the edges of $G$. Call the two subgraphs the factorization into $P\cdot Q$ induces $A$, whose edges are red, and $B$, whose edges are blue. Let $H^0(X)$ denote the set of connected components of the graph $X$. Let $\Upsilon$ be the graph whose vertices are elements of $H^0(A) \cup H^0(B)$ and whose edges are shared vertices in $G$.
	
	Choose spanning trees $T_{a}$ for each $a \in H^0(A)$ and notice $\bigcup_a T_{a}$ can always be extended to a spanning tree of $G$ using (necessarily) blue edges. Thus 
	\[
	\lambda \bra{\displaystyle \prod_{a \in H^0(A)} \prod_{e \not \in T_a} e}
	\]
	is a monomial of $P$ and $\lambda \neq 0$ since otherwise this product will never show up in the polynomial $\kirk{G}$ contradicting that the spanning trees $T_{a}$ can be extended to a spanning tree of $G$. Similarly, 
	\[
	\mu \bra{\displaystyle \prod_{b \in H^0(B)} \prod_{e \not \in T_b} e}
	\] 
	is a non-zero monomial of $Q$. But now
	\[
	\lambda \mu \bra{\prod_{a \in H^0(A)} \prod_{e \not \in T_a} e}\bra{\prod_{b \in H^0(B)} \prod_{e \not \in T_b} e}
	\]
	is a monomial of $\kirk{G}$. In particular, $\bigcup_a T_{a} \cup \bigcup_b T_{b}$ must be a spanning tree, hence acyclic, so $\Upsilon$ must be a tree. In particular $G$ has at least one cut vertex.
\end{proof}


\begin{cor} \label{cor: biconnected factors}
	If $G$ is biconnected, does not have self-loops, and not $K_2$, then the Kirchhoff polynomial of $G$ is irreducible and non-constant in every edge variable.
\end{cor}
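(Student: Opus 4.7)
The plan is to read this corollary as essentially a specialization of Lemma~\ref{lem: kirchhoff factors}, with the non-vanishing-in-each-variable claim coming from a separate direct observation.

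First I would recall that the biconnected components (blocks) of a graph partition its edges, and that a biconnected graph has itself as its unique block. Under the hypothesis that $G$ is biconnected, loopless, and not $K_2$, the only block of $G$ is $G$ itself, and this block is not isomorphic to $K_2$. Lemma~\ref{lem: kirchhoff factors} then says that the non-trivial factors of $\kirk{G}$ correspond to this single block, so up to scalars $\kirk{G}$ is its own only non-trivial factor. Equivalently, any factorization $\kirk{G} = P \cdot Q$ with $P, Q$ non-constant would contradict the lemma; hence $\kirk{G}$ is irreducible.

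For the second claim I would argue directly. Since $G$ is biconnected and not $K_2$, no edge of $G$ is a bridge. So for any edge $e \in E(G)$, the graph $G \smallsetminus e$ is connected and admits a spanning tree $T$, which is simultaneously a spanning tree of $G$ that avoids $e$. The monomial $\prod_{f \notin T} t_f$ then appears in $\kirk{G}$ and includes the variable $t_e$, witnessing that $\kirk{G}$ depends non-trivially on $t_e$.

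I do not expect a serious obstacle here; the only subtlety worth pointing out is the degenerate case of two vertices joined by $k \ge 2$ parallel edges, which does satisfy the hypotheses (biconnected, loopless, not $K_2$). In this case $\kirk{G}$ is the elementary symmetric polynomial $e_{k-1}(t_1, \ldots, t_k)$, for which both conclusions are easy to verify directly, and the lemma-based argument above goes through without modification.
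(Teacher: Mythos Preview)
Your proof is correct and follows essentially the same approach as the paper: irreducibility is read off directly from Lemma~\ref{lem: kirchhoff factors}, and non-constancy in each variable follows because a biconnected graph other than $K_2$ has no bridges, so every edge is avoided by some spanning tree. Your additional remark on the multi-edge dipole case is harmless but unnecessary, since that case is already covered by the general argument.
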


\begin{proof}
	By Lemma \ref{lem: kirchhoff factors} the irreducibility is immediate. If $G$ is biconnected and not $K_2$ then it does not have a bridge, so each edge is avoided in at least one spanning tree. 
\end{proof}

\begin{cor}
	Let $H$ be a connected subgraph of a biconnected graph $G$ that does not have self-loops. If $\gcd(H,G) \neq 1$, then $H = G$ as graphs.
\end{cor}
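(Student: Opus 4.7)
The plan is to exploit the irreducibility of $\kirk{G}$ established in Corollary~\ref{cor: biconnected factors}. The hypothesis $\gcd(H,G) \neq 1$ immediately rules out $G = K_2$, since $\kirk{K_2} = 1$; so Corollary~\ref{cor: biconnected factors} applies and tells us that $\kirk{G}$ is irreducible in $\mathbb{Q}[t_e : e \in E(G)]$ and is non-constant in every variable $t_e$ with $e \in E(G)$. Because $\kirk{G}$ is irreducible, any nontrivial common factor of $\kirk{H}$ and $\kirk{G}$ must be an associate of $\kirk{G}$. Hence $\kirk{G}$ divides $\kirk{H}$, and we may write $\kirk{H} = \kirk{G} \cdot P$ for some polynomial $P$.

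Next I would show that this divisibility forces $E(G) \subseteq E(H)$. Fix any $e \in E(G)$. The Kirchhoff polynomial $\kirk{G}$ is linear in $t_e$ with nonzero leading coefficient (indeed, the coefficient of $t_e$ is $\kirk{G/e}$, and the constant term in $t_e$ is $\kirk{G\smallsetminus e}$, at least one of which is nonzero and nonconstant by Corollary~\ref{cor: biconnected factors}). Since degree in a single variable is additive over products of nonzero polynomials, the $t_e$-degree of $\kirk{G}\cdot P$ is at least $1$, so $t_e$ appears nontrivially in $\kirk{H}$. But $\kirk{H}$ only uses the variables $\{t_f : f \in E(H)\} \subseteq \{t_f : f \in E(G)\}$, so this forces $e \in E(H)$. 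Therefore $E(H) = E(G)$.

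Finally, I would pin down the vertex sets. Because $G$ is biconnected it has no isolated vertices, so every vertex of $G$ is incident to some edge of $E(G) = E(H)$, and in a subgraph both endpoints of each edge must be present. This gives $V(G) \subseteq V(H)$, and combining with $V(H) \subseteq V(G)$ (since $H$ is a subgraph) yields $V(H) = V(G)$. With both the edge and vertex sets agreeing, $H = G$ as graphs. The only real content beyond bookkeeping is the variable-support step in the middle paragraph; that step reduces to the elementary fact that the $t_e$-degree of a product of nonzero polynomials is the sum of the $t_e$-degrees, and everything else is a direct consequence of irreducibility and biconnectedness.
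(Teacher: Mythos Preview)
Your argument is correct and matches the paper's approach: irreducibility of $\kirk{G}$ forces $\kirk{G}\mid\kirk{H}$, and then the fact that $\kirk{G}$ is non-constant in every edge variable forces $E(G)\subseteq E(H)$, hence $H=G$. One harmless slip in your parenthetical: the coefficient of $t_e$ in $\kirk{G}$ is $\kirk{G\smallsetminus e}$ and the constant term is $\kirk{G/e}$, not the other way around---but you only use that $\kirk{G}$ is non-constant in $t_e$, which Corollary~\ref{cor: biconnected factors} already supplies.
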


\begin{proof}
	Since $\gcd(H,G) \neq 1$ we have that neither $G$ nor $H$ is a tree. As $H$ is connected its Kirchhoff polynomial is non-zero. Since the Kirchhoff polynomial of $G$ is irreducible, by Corollary \ref{cor: biconnected factors} we see that $G$ and $H$ have the same Kirchhoff polynomials. In particular, $H$ is non-constant in all of the edge variables of $G$, so the subgraph $H$ must contain all of the edges of $G$. 
\end{proof}

We remark that the condition that $H$ be a subgraph of $G$ in the previous corollary is essential. 


\begin{lemma}
  Let $G$ be a connected graph with terminal vertices $s$ and $t$.  If the polynomials $\kirk{G}$ and $\breaker{G}$ have a nontrivial common factor then either
  $G$ has a biconnected component which is connected to the rest of $G$ at a single vertex and includes neither $s$ nor $t$ except possibly one of them as the cut vertex, or $G$ has a self-loop.
\end{lemma}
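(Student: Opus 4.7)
Assume $G$ has no self-loop (otherwise we are done) and let $G'$ denote $G$ with $s$ and $t$ identified, so $\breaker{G}=\kirk{G'}$. By Lemma~\ref{lem: kirchhoff factors} and Corollary~\ref{cor: biconnected factors}, both $\kirk{G}$ and $\kirk{G'}$ factor into irreducible polynomials of the form $\kirk{B}$ for non-$K_2$ blocks $B$, together with a factor $t_{st}$ in $\kirk{G'}$ if $st \in E(G)$ (which produces a self-loop in $G'$). Since the variable $t_{st}$ does not divide $\kirk{G}$ (as $G$ has no self-loop) and Kirchhoff polynomials of distinct blocks of a graph involve disjoint variable sets, any nontrivial common factor of $\kirk{G}$ and $\breaker{G}$ must equal $\kirk{B}$ for some non-$K_2$ block $B$ of $G$ which is also a block of $G'$ (after the renaming $s\mapsto s\sim t$).

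The main structural step is a comparison of the block-cut trees of $G$ and $G'$. Let $T$ be the block-cut tree of $G$, and let $P$ be the unique path in $T$ joining the vertex representing $s$ (either $s$ itself if $s$ is a cut vertex, or the unique block containing $s$ otherwise) to the corresponding vertex for $t$. I will show that a block $B$ of $G$ remains a block of $G'$ precisely when $B$ does not lie on $P$: identifying $s$ with $t$ produces two internally disjoint $s$-$t$ paths through the blocks of $P$, merging them into a single biconnected component of $G'$, whereas blocks off $P$ are untouched, since every edge incident to a non-cut-vertex of such a block is internal to it, and its cut vertex in $G$ remains a cut vertex in $G'$. The degenerate case in which $P$ consists of a single block $B_1$ containing both $s$ and $t$ requires a small separate argument: using the deletion-contraction identity $\kirk{B_1\cup\{st\}}=t_{st}\kirk{B_1}+\breaker{B_1}$ together with the irreducibility of $\kirk{B_1\cup\{st\}}$, one concludes $\kirk{B_1}\nmid \breaker{B_1}$, so $B_1$ also fails to contribute to the common factor.

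Given the structural claim, the existence of a nontrivial common factor forces a non-$K_2$ block of $G$ to lie off $P$. To upgrade this to the pendant block required in the statement, note that if every leaf of $T$ lay on $P$ then (since the leaves of a path in a tree are its endpoints) all leaves of $T$ would be endpoints of $P$, forcing $T=P$ and contradicting the existence of a block off $P$. So $T$ has a leaf $L$ off $P$. By definition of a leaf in the block-cut tree, $L$ is a biconnected component attached to the rest of $G$ at a single cut vertex $v$. Moreover $\{s,t\}\cap V(L)\subseteq \{v\}$: if, say, $s$ lay in $V(L)\setminus\{v\}$, then $s$ would be a non-cut-vertex of $G$ contained only in $L$, forcing the $s$-endpoint of $P$ to be $L$ itself and contradicting $L\notin P$. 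This $L$ is the biconnected component asserted by the lemma.

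The main obstacle is the structural claim about which blocks of $G$ survive as blocks of $G'$: one must verify maximal biconnectedness in $G'$ carefully, especially when a block off $P$ meets $\{s,t\}$ only at a cut vertex, and dually one must show that the blocks on $P$ genuinely coalesce into a single biconnected component of $G'$. The remaining steps are combinatorial bookkeeping on the block-cut tree.
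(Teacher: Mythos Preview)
Your approach is correct and takes a somewhat different route from the paper's. Both proofs pass to $G'=\Gamma$ (the graph with $s,t$ identified) and use the factorisation of Kirchhoff polynomials into blocks, but they organise the case analysis differently. The paper argues directly on the block structure of $\Gamma$: if $\Gamma$ has a cut vertex other than the merged vertex $v$, one immediately finds a block of $\Gamma$ avoiding $v$ and pulls it back to the desired pendant block of $G$; if $v$ is the only cut vertex of $\Gamma$, a short contradiction (showing $G$ must then be biconnected or a series chain from $s$ to $t$, and ruling out both via irreducibility of $\kirk{G}$ plus a degree/multilinearity count on $\breaker{G}=\kirk{G}\cdot L$) finishes. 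You instead work with the block--cut tree $T$ of $G$, identify the $s$--$t$ path $P$ in $T$, and show that the irreducible factors of $\kirk{G}$ surviving in $\kirk{G'}$ are exactly those coming from blocks off $P$; the pendant block is then any leaf of $T$ off $P$. For the degenerate case (a single block $B_1$ on $P$) you use the add-an-edge trick $\kirk{B_1\cup\{st\}}=t_{st}\kirk{B_1}+\breaker{B_1}$ and irreducibility of the left side, whereas the paper invokes multilinearity of $\breaker{G}$ directly.

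Your route is a bit longer but yields a sharper structural statement --- precisely which blocks of $G$ contribute to $\gcd(\kirk{G},\breaker{G})$ --- while the paper's is more economical. The step you flag as the main obstacle, that the blocks on $P$ coalesce into a single (loopless, non-$K_2$) biconnected piece of $G'$, is genuine but routine once you note that when $P$ contains at least two blocks $s$ and $t$ cannot share a block of $G$, so identifying them creates no self-loop in the merged piece; irreducibility of its Kirchhoff polynomial then blocks any $\kirk{B_i}$ from dividing it.
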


\begin{proof}
	If $G$ has a self-loop the result is immediate, so we may assume otherwise. Let $\Gamma$ be the graph $G$ with $s$ and $t$ identified and note $\breaker{G}$ is the Kirchhoff polynomial $\Gamma$, which by the $\gcd$ condition is non-trivial. Additionally, note that there is a natural morphism of graphs $\varphi\colon G \ra \Gamma$ induced by identifying $s$ and $t$. Let $v$ be the vertex in $\Gamma$ that is the image of both $s$ and $t$ under $\varphi$. Observe that the restriction $\varphi\colon G \bs \{s,t\} \ra \Gamma \bs v$ is an isomorphism.

        Observe that $\deg(\breaker{G}) = \deg(\kirk{G})+1$ so $\breaker{G}$ and $\kirk{G}$ having a factor in common implies that $\breaker{G}$ has a nontrivial factor.  Furthermore, $\Gamma$ is not $K_2$ and so by Corollary~\ref{cor: biconnected factors} we see that $\Gamma$ has at least one cut vertex.
        
	Let $\{ Y_1, \ldots, Y_n \}$ be the biconnected components of $\Gamma$. If $\Gamma$ has a cut vertex $w$ which is not $v$ then there is a $Y_j$ not containing $v$. Thus $\varphi^{-1}(Y_j)$ is a biconnected component of $G \bs \{s,t\}$. There is at most one neighbour of $v$ in $Y_j$, namely $w$, so it follows that $\varphi^{-1}(Y_j)$ is a biconnected component of $G$ which does not contain either $s$ or $t$.

	Otherwise, the unique cut vertex of $\Gamma$ is $v$.
        Suppose $G$ has no biconnected component as described in the statement.  Then either $G$ is biconnected or $G$ is a series join of at least two biconnected components running from $s$ to $t$.  In this latter case, $\Gamma$ is a cycle of biconnected components and hence has no cut vertex, giving a contradiction.
        Finally, suppose the graph $G$ is biconnected.  Since $\gcd(\kirk{G}, \kirk{\Gamma}) \neq 1$, we see that $G$ is not $K_2$ and so by Corollary~\ref{cor: biconnected factors} the polynomial $\kirk{G}$ is irreducible.  Thus by looking at the degrees we see $\breaker{G}=G\cdot L$ where $\deg L = 1$.  Also, by Corollary~\ref{cor: biconnected factors} the polynomial $\kirk{G}$ is non-constant in every edge variable so any variable in $L$ appears quadratically in $\breaker{G}$, which is a contradiction.  Thus $G$ has a biconnected component as described in the statement.
     	
\end{proof}



\begin{figure}[h]
	\centering
	\includegraphics[page=1,width=.45\textwidth]{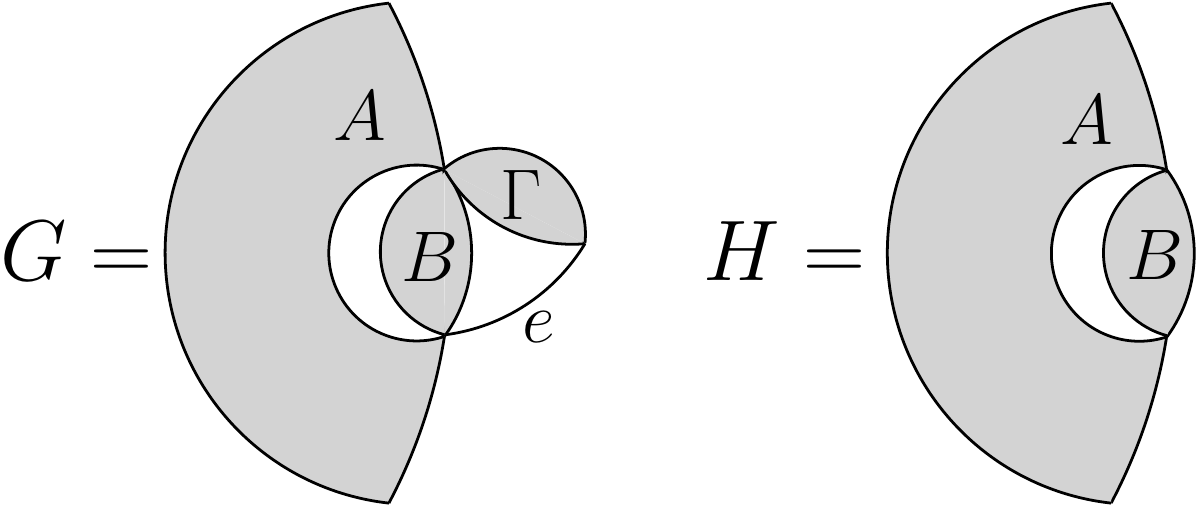}
	\caption{Graphs used for proof of Proposition \ref{stability}}
	\label{fig1}
\end{figure}

\begin{prop} \label{stability}
	Let $G$ and $H$ be series-parallel graphs of the form illustrated in Figure \ref{fig1} and suppose condition 1 holds for $e$ in $G\paral H$. Then $S(G,e)$ holds.
\end{prop}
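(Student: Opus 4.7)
The plan is to extract the two identities required by $S(G, e)$ from the single identity provided by $1(G \paral H, e)$, using a bi-degree decomposition in the edge variables of $G$ and $H$. Let $x$ denote the edge variables of $G$ and $y$ those of $H$. Expanding via Lemma~\ref{Psi-P identities}, condition $1(G \paral H, e)$ provides polynomials $B_j \in \mathbb{Q}[x, y]$ of total degree $2$ such that
\allstar{
	\kirk{G}\breaker{H} + \breaker{G}\kirk{H} = & \sum_{j \in E(G) \setminus \{e\}} B_j \bra{\dkirk{G}{ej}\breaker{H} + \dbreaker{G}{ej}\kirk{H}} \\
	& + \sum_{j \in E(H)} B_j \bra{\dkirk{G}{e}\dbreaker{H}{j} + \dbreaker{G}{e}\dkirk{H}{j}}.
}
Decompose each $B_j$ into bi-homogeneous pieces $B_j^{(a, b)}$ of $x$-degree $a$ and $y$-degree $b$ with $a + b = 2$. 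The LHS is supported in only two bi-degrees, namely $(\deg \kirk{G}, \deg \breaker{H})$ and $(\deg \breaker{G}, \deg \kirk{H})$, yielding two nontrivial bi-homogeneous equations (with the other RHS bi-degree pieces forced to sum to zero).

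Set $A_j := B_j^{(2, 0)}$ for $j \in E(G) \setminus \{e\}$, so each $A_j \in \mathbb{Q}[x]$ is homogeneous of $x$-degree $2$. Equating the bi-degree $(\deg \breaker{G}, \deg \kirk{H})$ components and rearranging gives
\[
	\kirk{H} \bra{\breaker{G} - \sum_{j \in E(G)} A_j \dbreaker{G}{ej}} = \dkirk{G}{e} X(x, y) + \dbreaker{G}{e} Y(x, y),
\]
where $X, Y$ arise from the $H$-edge contributions. Since $\kirk{H} \in \mathbb{Q}[y]$ is coprime to $\dkirk{G}{e}, \dbreaker{G}{e} \in \mathbb{Q}[x]$ (disjoint variable sets), and since $\gcd(\kirk{H}, \breaker{H}) = 1$ (guaranteed by the structural hypothesis on $H$ from Figure~\ref{fig1}, via Corollary~\ref{cor: biconnected factors}), a divisibility argument forces $\breaker{G} - \sum A_j \dbreaker{G}{ej} = C \cdot \dbreaker{G}{e}$ for some $C \in \mathbb{Q}[x]$. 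The same analysis on the bi-degree $(\deg \kirk{G}, \deg \breaker{H})$ component --- where $B_j^{(2, 0)}$ again appears, now as the coefficient of $\dkirk{G}{ej}\breaker{H}$ --- yields $\kirk{G} - \sum A_j \dkirk{G}{ej} = B \cdot \dkirk{G}{e}$ with the \emph{same} $A_j$'s. Together these are exactly the defining identities of $S(G, e)$.

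I expect the main obstacle to be handling the cross-contribution terms from $B_j^{(1, 1)}$ (for $j \in E(G)$) and $B_j^{(0, 2)}$ (for $j \in E(H)$), which enter both bi-homogeneous equations alongside the desired terms. In the bi-degree $(\deg \kirk{G}, \deg \breaker{H})$ equation, for instance, $B_j^{(1, 1)}$ for $j \in E(G)$ contributes a $B_j^{(1, 1)} \dbreaker{G}{ej} \kirk{H}$ term, and the $H$-edge summands contribute further mixed terms. Showing that these cross terms assemble cleanly into the ideals $\gen{\dkirk{G}{e}}$ and $\gen{\dbreaker{G}{e}}$ is where the argument must lean on Euler's theorem applied to $\breaker{H}$ and $\kirk{H}$ in the $y$-variables (Lemma~\ref{Psi-P identities}(e)), together with the coprimality of $\kirk{H}$ and $\breaker{H}$ coming from the assumed form of $H$ in Figure~\ref{fig1}.
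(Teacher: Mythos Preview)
Your bi-degree decomposition does extract the equation
\[
\kirk{H}(y)\,\Bigl(\breaker{G} - \sum_{j\in G\setminus\{e\}} A_j\,\dbreaker{G}{ej}\Bigr)
= \dkirk{G}{e}(x)\,X(x,y) + \dbreaker{G}{e}(x)\,Y(x,y),
\]
but the ``divisibility argument'' you invoke does not force the conclusion you claim. Coprimality of $\kirk{H}\in\mathbb{Q}[y]$ with polynomials in $\mathbb{Q}[x]$ only lets you specialize $y$ to a point where $\kirk{H}\neq 0$ and conclude that $\breaker{G} - \sum A_j\,\dbreaker{G}{ej}$ lies in the ideal $\langle \dkirk{G}{e},\,\dbreaker{G}{e}\rangle$ of $\mathbb{Q}[x]$; it does \emph{not} force membership in the principal ideal $\langle\dbreaker{G}{e}\rangle$. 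To see the gap concretely, run your argument with $H=K_2$ (so $\kirk{H}=1$, $\breaker{H}=y_1$): the bi-degree $(n{+}1,0)$ piece gives $\breaker{G}-\sum A_j\,\dbreaker{G}{ej}=B_{y_1}^{(2,0)}\cdot\dkirk{G}{e}$, a multiple of $\dkirk{G}{e}$ rather than $\dbreaker{G}{e}$, and there is no mechanism in your outline to convert one into the other. The companion bi-degree $(n,m{+}1)$ equation is worse still, since the cross terms there involve $\sum_{j\in G}B_j^{(1,1)}\dbreaker{G}{ej}\kirk{H}$, which cannot be absorbed into $\langle\dkirk{G}{e},\dbreaker{G}{e}\rangle$ at all.

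The underlying issue is that you have not used the structural hypothesis of Figure~\ref{fig1}. There, $H$ is not an arbitrary series-parallel graph but a \emph{specific subgraph of $G$}: writing $\Gamma:=G\setminus(H\cup\{e\})$ one has $\dkirk{G}{e}=\kirk{H}\,\kirk{\Gamma}$ and $\dbreaker{G}{e}=\breaker{H}\,\kirk{\Gamma}$. The paper's proof exploits this by \emph{identifying} the edge variables of the external $H$ in $G\paral H$ with those of the internal copy inside $G$; the argument then reduces to checking divisibility by $\kirk{\Gamma}$, which is handled by a separate computation modulo $\langle\kirk{\Gamma}\rangle$. Your outline treats $H$ as disjoint from $G$ and invokes Figure~\ref{fig1} only for $\gcd(\kirk{H},\breaker{H})=1$, which holds for every series-parallel graph and so carries no information about the particular relationship between $G$ and $H$ that the proposition actually requires.
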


\begin{proof}
	Let $e=e_1$ and note that series-parallel graphs do not contain self-loops.
	
	In what follows we identify the edge variables of $H$ with their counterparts in $H \inj G$. Let $\Gamma := G \bs (H \cup \set{e})$. 
	Note $\dkirk{G}{1} = \kirk{H}\kirk{\Gamma}$ and $\dbreaker{G}{1} = \breaker{H}\kirk{\Gamma}$.
	Since condition 1 holds we may write $G \paral H$ as
	\allstar{
		\kirk{G}\breaker{H} + \breaker{G}\kirk{H} = & 
		\sum_{j \in H \subset G} B'_j \dkirk{H}{j} \kirk{\Gamma}\breaker{H} + \sum_{j\in \Gamma} C_j\kirk{H}\dkirk{\Gamma}{j}\breaker{H} + \sum_{j \in H}B''_j\kirk{H}\kirk{\Gamma}\dbreaker{H}{j} \\
		& + \sum_{j \in H \subset G} B'_j \dbreaker{H}{j} \kirk{\Gamma}\kirk{H} + \sum_{j\in \Gamma} C_j\breaker{H}\dkirk{\Gamma}{j}\kirk{H} + \sum_{j \in H}B''_j\breaker{H}\kirk{\Gamma}\dkirk{H}{j} + D\kirk{H}\kirk{\Gamma}\breaker{H}\\
		= & \breaker{H} \bra{ \sum_{j \in H} B_j \dkirk{G}{1j} + \sum_{j \in \Gamma} C_j \dkirk{G}{1j} } 
		+ \kirk{H} \bra { \sum_{j \in H} B_j \dbreaker{G}{1j}  + \sum_{j \in \Gamma} C_j \dbreaker{G}{1j} } + D\kirk{H}\kirk{\Gamma}\breaker{H} \\
	}
	where $B_j = B_j' + B_j''$.
	Since $H$ is series-parallel it does not have a cut vertex with a biconnected component joining only at that vertex and containing neither $s$ nor $t$ except possibly as the cut vertex, so $\gcd(\kirk{H},\breaker{H}) = 1$. Thus from rearranging and factoring we get
	\begin{align}
		\breaker{H} &\mid \breaker{G} - \bra { \sum_{j \in H} B_j \dbreaker{G}{1j}  + \sum_{j \in \Gamma} C_j \dbreaker{G}{1j}} \label{div eq 1}\\
		\kirk{H} &\mid \kirk{G} - \bra{ \sum_{j \in H} B_j \dkirk{G}{1j} + \sum_{j \in \Gamma} C_j \dkirk{G}{1j} } \label{div eq 2}.
	\end{align}
	If $\kirk{\Gamma}$ were also to divide each of the expressions above, then by disjointness of the variables of $H,\Gamma$ we see $\dbreaker{G}{1} = \breaker{H}\kirk{\Gamma}$ divides the first expression and $\dkirk{G}{1} = \kirk{H}\kirk{\Gamma}$ divides the second. Whence, $S(G,e)$ would hold. We proceed to prove exactly this.
	
	Notice for $j \in H$ that $\Gamma$ remains a 1-cut component of $G \bs \set{1,j}$, so $\kirk{\Gamma} \mid \dbreaker{G}{1j}$ and $\kirk{\Gamma} \mid \dkirk{G}{1j}$. Since $H$ is connected we get by the 2-vertex-cut formula that
	\allstar{
		\kirk{G} &= \kirk{H}(x_e\kirk{\Gamma} + \breaker{\Gamma}) + \overset{uv}{H}\kirk{\Gamma} \\
		\breaker{G} &= \breaker{H}(x_e\kirk{\Gamma} + \breaker{\Gamma})  \overset{st, uv}{H}\kirk{\Gamma}.
	} 
	where $u,v$ are the source and terminal of $\Gamma \bolt e$. So the right hand sides of \eqref{div eq 1} and \eqref{div eq 2} modulo $\gen{\kirk{\Gamma}}$ become
	\allstar{
		\kirk{H}\breaker{\Gamma} - \sum_{j \in \Gamma} C_j \dkirk{G}{1j} + \gen{\kirk{\Gamma}} \\
		\breaker{H}\,\breaker{\Gamma} - \sum_{j \in \Gamma} C_j \dbreaker{G}{1j} + \gen{\kirk{\Gamma}}. \\
	} 
	Or further simplifying with our explicit expressions of $\breaker{G},\kirk{G}$ we get
	\allstar{
		\kirk{H}\breaker{\Gamma} - \sum_{j \in \Gamma} C_j \kirk{H}\dkirk{\Gamma}{j} + \gen{\kirk{\Gamma}} \\
		\breaker{H}\,\breaker{\Gamma} - \sum_{j \in \Gamma} C_j \breaker{H}\dkirk{\Gamma}{j} + \gen{\kirk{\Gamma}}. \\
	}
	Again, since $\kirk{H},\breaker{H}$ are coprime to $\kirk{\Gamma}$ it suffices to verify 
	\[
	\kirk{\Gamma} | \breaker{\Gamma} - \sum_{j \in \Gamma} C_j \dkirk{\Gamma}{j}. 
	\]
	Returning to the original expression we see that
	\allstar{
		\kirk{G\paral H} + \gen{\kirk{\Gamma}}= & \sum_{j \in H} B_j \dkirk{G}{1j} \breaker{H} + \sum_{j \in \Gamma} C_j \dkirk{G}{1j} \breaker{H}  \\
		& +   \sum_{j \in H} B_j \dbreaker{G}{1j} \kirk{H} + \sum_{j \in \Gamma} C_j \dbreaker{G}{1j} \kirk{H} + \gen{\kirk{\Gamma}} \\
		= & \sum_{j \in \Gamma} C_j \dkirk{G}{1j} \breaker{H} + \sum_{j \in \Gamma} C_j \dbreaker{G}{1j} \kirk{H} + \gen{\kirk{\Gamma}} \\
		= & \sum_{j \in \Gamma} C_j \dkirk{\Gamma}{j} \kirk{H} \breaker{H} + \sum_{j \in \Gamma} C_j \dkirk{\Gamma}{j}\breaker{H} \kirk{H} + \gen{\kirk{\Gamma}} \\
		= & 2\sum_{j \in \Gamma} C_j \dkirk{\Gamma}{j} \kirk{H} \breaker{H} + \gen{\kirk{\Gamma}}.
	}
	Meanwhile, from Figure \ref{fig1} we see by the two vertex cut formula that
	\allstar{
		\kirk{G\paral H} + \gen{\kirk{\Gamma}} &= (\breaker{\Gamma} + x_e \kirk{\Gamma}) (\kirk{H\paral H}) + \kirk{\Gamma} (\kirk{H}\overset{st,uv}{H} + \breaker{H}\overset{uv}{H}) + \gen{\kirk{\Gamma}} \\
		&= \breaker{\Gamma}(\kirk{H\paral H}) + \gen{\kirk{\Gamma}} \\
		&= 2\breaker{H}\kirk{H}\breaker{\Gamma} + \gen{\kirk{\Gamma}}.
	}
	Thus taking the difference of the two previous calculations we obtain
	\[
		0 = 2\breaker{H}\kirk{H}\bra{\breaker{\Gamma} - \sum_{j \in \Gamma} C_j \dkirk{\Gamma}{j}} + \gen{\kirk{\Gamma}}.
	\]
	Since both $\breaker{H}$ and $\kirk{H}$ are coprime to $\kirk{\Gamma}$, we may cancel these terms to obtain the result. 
\end{proof}

\begin{cor}\label{stability cor}
	Suppose $G$ is a series-parallel graph such that $S(G,e)$ does not hold but Condition 1 does hold for $e$.  Then there is a series-parallel graph $H$ such that Condition 1 fails for $e$ in $G\paral H$.
\end{cor}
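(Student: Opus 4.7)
The plan is to apply the contrapositive of Proposition \ref{stability} essentially directly. Unpacking, that contrapositive reads: whenever $G$ and $H$ are series-parallel graphs fitting the setup of Figure \ref{fig1} and $S(G,e)$ does not hold, then Condition 1 fails for $e$ in $G \paral H$. So the only genuine work is to exhibit some series-parallel graph $H$ for which the pair $(G,H)$ matches the form required by Figure \ref{fig1}.

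The first step I would take is to produce such an $H$ explicitly. Since $G$ is a series-parallel graph with designated source and terminal vertices, a natural choice is a minimal series-parallel source-terminal building block, for instance $K_2$, or more generally any convenient non-trivial series-parallel graph attached in parallel to $G$ via its source and terminal. With this choice, $G \paral H$ remains series-parallel, $e$ continues to be an edge of the $G$-component of $G\paral H$, and the structural setup of Figure \ref{fig1} is satisfied. Once this is verified, the contrapositive of Proposition \ref{stability}, applied to this pair $(G,H)$, yields that $1(G\paral H, e)$ fails.

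The hypothesis that $1(G, e)$ does hold plays only a non-degeneracy role: it rules out the uninteresting cases in which $e$ is a bridge or self-loop of $G$, ensuring that the statement of the corollary is meaningful rather than vacuous, and that $e$ is a regular edge so the arguments behind Proposition \ref{stability} apply. The only real obstacle, minor as it is, is to verify that the chosen $H$ genuinely satisfies the structural requirements depicted in Figure \ref{fig1}; this is a routine check given the flexibility of the series-parallel construction. Beyond that, the proof is essentially a one-line appeal to the contrapositive of Proposition \ref{stability}.
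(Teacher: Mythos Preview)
Your overall strategy—apply the contrapositive of Proposition~\ref{stability}—is exactly right, and you are correct that the hypothesis $1(G,e)$ forces $e$ to be regular. But you have misread what Figure~\ref{fig1} requires, and this is the entire content of the argument.

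In Proposition~\ref{stability}, $H$ is \emph{not} an arbitrary series-parallel graph glued in parallel to $G$. Look at the proof: the edge variables of $H$ are identified with their counterparts under an inclusion $H \hookrightarrow G$, and one sets $\Gamma := G \setminus (H \cup \{e\})$ so that $\dkirk{G}{1} = \kirk{H}\kirk{\Gamma}$. In other words, Figure~\ref{fig1} demands that $G$ itself decompose as $H \paral (\Gamma \bolt e)$ for some non-trivial $\Gamma$, and the $H$ to which one then parallel-joins is a second copy of that specific subgraph of $G$. Choosing $H = K_2$ will not satisfy this unless $K_2$ already sits inside $G$ in the required way, which it generally does not.

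Consequently the ``routine check'' you dismiss is precisely the work the paper does: one argues that since $e$ is regular, somewhere in the series-parallel construction of $G$ there is a subgraph of the form $B \paral (C_1 \bolt e \bolt C_2)$; one rewrites this as $B \paral (C \bolt e)$ with $C = C_1 \bolt C_2$; and if $C$ happens to be empty one invokes Proposition~\ref{2edge} to subdivide $e$ (the two halves form a 2-edge cut, so condition~1 is unaffected) and thereby force $C$ to be non-empty. Only after this structural massaging is $G$ genuinely of the form in Figure~\ref{fig1}, with $H = B$ the required witness. Your proposal skips all of this and so does not yet constitute a proof.
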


\begin{proof}
	Note that $G$ is series-parallel and so does not have self-loops. Condition 1 holds for $e$ in $G$, so $e$ is regular and so at some point in the construction of $G$ by series and parallel operations we have the subgraph $B \paral (C_1\bolt e \bolt C_2)$, where one or both of $C_1$ and $C_2$ may be empty.  Without loss of generality we may instead consider $B \paral (C\bolt e)$ where $C = C_1\bolt C_2$ since the Kirchhoff polynomial is invariant under this transformation. By Proposition \ref{2edge} we may subdivide $e$ if necessary to ensure that $C$ is non-empty (if $e$ is subdivided the two halves are the 2-edge cut set). Therefore $G$ is of the form to apply Proposition~\ref{stability}.  Let $H$ be as in Proposition~\ref{stability}.  Since $S(G,e)$ does not hold by assumption, we see by Proposition~\ref{stability} that condition 1 does not hold for $e$ in $G\paral H$.
\end{proof}



\section{Carving out a series-parallel class for condition 1}

As we saw in the previous section simultaneous combination is well behaved with respect to series and parallel joins, implies condition 1, and is strictly stronger than condition 1.  Furthermore the way in which it is stronger is itself well behaved under parallel join in the sense made precise in Corollary~\ref{stability cor}.


Consequently, characterizing a class of series-parallel graphs which have the simultaneous combination property would give a class of series-parallel graphs whose edges all satisfy condition 1. Due to the nice behaviour with respect to series and parallel joins one would expect that we could give a recursively defined family of series parallel graphs with the simultaneous combination property. Unfortunately, simultaneous combination is not well suited to pulling out a good base case for implementing this plan.  We need instead a variant on simultaneous combinations to generate a larger class of graphs that satisfy $S(G)$. The point of this condition shows up most strongly in Corollary~\ref{T-series-lift-cor}; specifically it captures when a parallel join with edge $e$ will satisfy $S(G,e)$. As an added benefit we are then able to apply these technical results to identify a combinatorial condition that ensures $S(G)$.

\begin{definition} \label{simultaneous combination II}
Let $G$ be a series-parallel graph. We say 
$T(G)$ holds, if there are polynomials $A_j,C$ such that
\allstar{
\breaker{G} &= \sum A_j \dkirk{G}{j}\\
C \cdot \breaker{G} &= \sum A_j \dbreaker{G}{j} \in \gen{\breaker{G}}.
}
\end{definition}

We give some examples of condition $T$ for small graphs in Figure~\ref{fig:TofG}. It is helpful to compare these examples to the example in Figure~\ref{counterexample}. Condition 1 is false for the edge $e \in G$ in Figure~\ref{counterexample}. By Corollary~\ref{stability cor} we see that $S(H,e)$ does not hold, where $H$ is the graph from Figure~\ref{counterexample}. Finally, we will see by Lemma~\ref{bridge lemma}(i) that condition $T$ fails for the graph in Figure~\ref{fig:TofG}b, explaining why $S(H, e)$ fails. 

Condition $T$ is well behaved with respect to parallel join and we can understand both joins with paths.

\begin{figure}[h]
	\centering
	\includegraphics[width=0.36\textwidth]{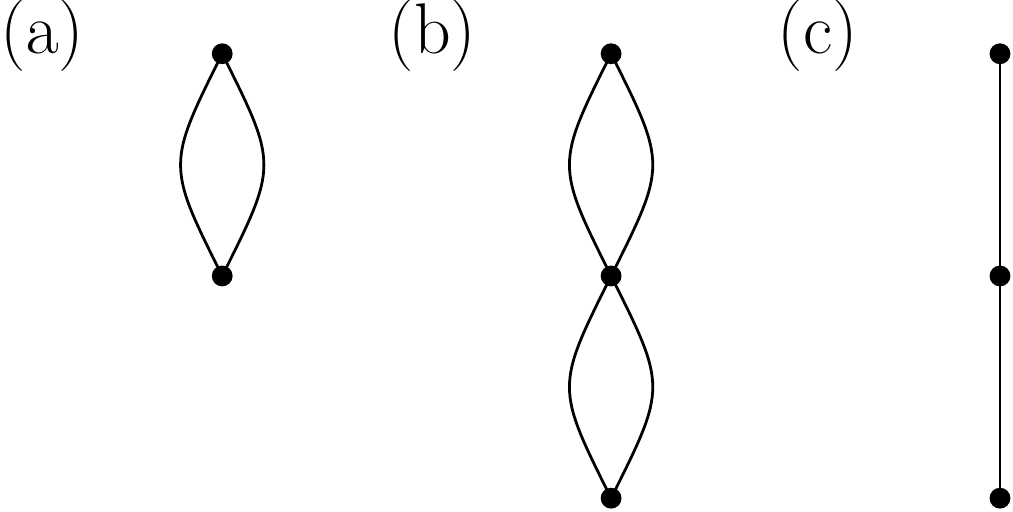}
	\caption{Graph (a) satisfies condition $T$. Graph (b) does not satisfy condition $T$, which demonstrates that condition $T$ is not well behaved with respect to series join. Finally, graph (c) does not satisfy condition $T$ for the trivial reason that the derivatives of its Kirchhoff polynomial are identically 0.}
	\label{fig:TofG}
\end{figure}

\begin{lemma} \label{T-parallel-lift}
If $H,H'$ are series-parallel graphs and $T(H)$ then $T(H\paral H')$.
\end{lemma}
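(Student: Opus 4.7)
The plan is to construct witnesses for $T(H \paral H')$ by starting from the $T(H)$ witnesses and adjoining Euler-type corrections given by scalar multiples of edge variables, exactly parallel to the recipe used in Lemma~\ref{S-lift}. Set $G := H \paral H'$, $n := \deg \breaker{H}$, $m := \deg \breaker{H'}$, and let $(A_j)_{j \in H}, C$ be polynomials as in $T(H)$. From the parallel identities of Lemma~\ref{Psi-P identities}, for $j \in H$ we have $\dkirk{G}{j} = \dkirk{H}{j}\breaker{H'} + \dbreaker{H}{j}\kirk{H'}$ and $\dbreaker{G}{j} = \dbreaker{H}{j}\breaker{H'}$, with the symmetric formulas for $j \in H'$.

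Concretely, I would try witnesses of the form $A_j' := A_j + \alpha x_j$ for $j \in H$ and $A_j' := \beta x_j$ for $j \in H'$, where $\alpha, \beta \in \mathbb{Q}$ are scalars to be determined. Using $T(H)$ on the $A_j$ contribution to $\sum_{j \in G} A_j' \dkirk{G}{j}$ produces $\breaker{H}\breaker{H'} + C\breaker{H}\kirk{H'}$, exceeding the target $\breaker{G}$ by the unwanted term $C\breaker{H}\kirk{H'}$. Euler's theorem applied to the four homogeneous polynomials $\kirk{H}, \breaker{H}, \kirk{H'}, \breaker{H'}$ expresses each $x_j$-correction as an explicit $\mathbb{Q}$-combination of the monomials $\breaker{H}\kirk{H'}$ and $\kirk{H}\breaker{H'}$, so requiring both coefficients to vanish amounts to a $2 \times 2$ linear system in $(\alpha, \beta)$. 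A brief computation shows its determinant is, up to sign, $n + m - 1$, which is positive since Lemma~\ref{Psi-P identities}(e) forces $n, m \geq 1$; hence a unique rational solution exists, and the first equation of $T(G)$ holds by construction.

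For the second equation of $T(G)$, the $T(H)$ hypothesis gives $\sum_{j \in H} A_j \dbreaker{G}{j} = C\breaker{G}$ at once, and Euler's theorem applied to $\breaker{H}$ and $\breaker{H'}$ turns the two $x_j$-corrections into $\alpha n \breaker{G}$ and $\beta m \breaker{G}$ respectively; setting $C' := C + \alpha n + \beta m$ then completes the witness and the required containment $\sum A_j' \dbreaker{G}{j} \in \gen{\breaker{G}}$ is automatic. The only genuine obstacle is verifying that a single pair $(\alpha, \beta)$ can simultaneously cancel both of the off-target coefficients in the first equation, which reduces to the non-singularity check above; the remainder is routine bookkeeping, in spirit identical to the construction in Lemma~\ref{S-lift}.
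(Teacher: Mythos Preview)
Your approach is essentially the paper's: start from the $T(H)$ witnesses, add Euler corrections $\alpha x_j$ ($j\in H$) and $\beta x_j$ ($j\in H'$), and determine the coefficients by a $2\times 2$ system with determinant $\pm(n+m-1)$. There is one explicit slip, however. You declare $\alpha,\beta\in\mathbb{Q}$, but the right-hand side of your system involves $C$, which is a \emph{polynomial}, not a scalar. Taking the $A_j$ homogeneous of degree~$2$ (as one may, since $\breaker{H}=\sum A_j\dkirk{H}{j}$ is homogeneous of degree~$n$ and each $\dkirk{H}{j}$ has degree $n-2$), the second equation $C\breaker{H}=\sum A_j\dbreaker{H}{j}$ forces $\deg C=1$. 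Your system reads $(n-1)\alpha+m\beta=0$ and $n\alpha+(m-1)\beta=-C$, which has no solution with $\alpha,\beta\in\mathbb{Q}$ unless $C=0$.

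The fix is immediate and does not change the structure: solve the system over the polynomial ring instead, so that $\alpha,\beta$ come out as rational multiples of $C$. This is exactly what the paper does, writing $B_j'=B_j-\tfrac{C(m-1)}{n+m-1}x_j$ for $j\in H$ and $B_j'=\tfrac{nC}{n+m-1}x_j$ for $j\in H'$. With this correction the rest of your outline goes through verbatim, and your $C'=C+\alpha n+\beta m$ is then a genuine polynomial, as Definition~\ref{simultaneous combination II} requires.
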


\begin{proof}
Let $G := H\paral H'$ and note that $\kirk{G} = \kirk{H}\breaker{H'} + \breaker{H} \kirk{H'}$. Let $n := \deg{\breaker{H}}$ and let $m := \deg(\breaker{H'})$. Thus
\[
\gen{\dkirk{G}{j}} = \gen{ \dkirk{H}{j} \breaker{H'} + \dbreaker{H}{j} \kirk{H'}, \kirk{H}\dbreaker{H'}{j} + \breaker{H} \dkirk{H'}{j}}.
\]
Now
\[
\breaker{H}\kirk{H'} = -\frac{1}{n+m-1}\bra{(m-1)\sum_{j \in H} x_j\dkirk{G}{j} - n\sum_{j \in H'} x_j \dkirk{G}{j}}.
\]
By $T(H)$ we may choose $B_j, j \in H$ and $C$ such that
\[
\sum_{j \in H} B_j \dkirk{G}{j} = \sum_{j \in H} B_j ( \dkirk{H}{j} \breaker{H'} + \dbreaker{H}{j} \kirk{H'}) = \breaker{H}\,\breaker{H'} + C\breaker{H}\kirk{H'}.
\]
Let
\[
B_j' := \piecewise{B_j - \frac{C(m-1)}{n+m-1}x_j}{j \in H}{\frac{nC}{n+m-1}x_j}
\]
We verify that $T(G)$ is satisfied with a calculation. 
\allstar{
\sum_{j \in G} B_j' \dkirk{G}{j} &= \sum_{j \in H} \bra{B_j - \frac{C(m-1)}{n+m-1}x_j} \dkirk{G}{j} + \sum_{j \in H'} \bra{\frac{nC}{n+m-1}x_j} \dkirk{G}{j} \\
&= \breaker{H}\,\breaker{H'} + C\breaker{H}\kirk{H'} - C\breaker{H}\kirk{H'} \\
&= \breaker{G}.
}
\noindent
Meanwhile
\allstar{
\sum_{j \in G} B_j' \dbreaker{G}{j} &=  \sum_{j \in H} \bra{B_j - \frac{C(m-1)}{n+m-1}x_j} \dbreaker{H}{j}\breaker{H'} + \sum_{j \in H'} \bra{\frac{nC}{n+m-1}x_j} \breaker{H}\,\dbreaker{H'}{j} \\
&= \breaker{H'} \bra{\sum_{j \in H} B_j \dbreaker{H}{j}} +   \frac{C}{n+m-1} \bra{\sum_{j \in H'} nx_j \breaker{H}\, \dbreaker{H'}{j}  - \sum_{j \in H} (m-1)x_j \dbreaker{H}{j}\breaker{H'}} \\
&= \breaker{H'} \bra{\sum_{j \in H} B_j \dbreaker{H}{j}} +  \frac{C}{n+m-1}  \bra{nm \breaker{H}\, \breaker{H'}  -  n(m-1) \breaker{H}\,\breaker{H'}}
}
\noindent
which by $T(H)$ is in the ideal $\gen{\breaker{H}\,\breaker{H'}}$.
\end{proof}

Condition $T$ has special behaviour for paths which we illustrate with the following lemmas.

\begin{lemma} \label{T-path-lift}
Let $H$ be a series-parallel graph such that $T(H)$. Then $T(H \bolt K_2)$.
\end{lemma}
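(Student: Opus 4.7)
The plan is to exploit that appending a $K_2$ in series changes $\kirk{G}$ minimally while adding exactly one new term to $\breaker{G}$. Let $e$ be the edge of the $K_2$, so $\kirk{K_2}=1$ and $\breaker{K_2}=x_e$. Setting $G:=H\bolt K_2$, Lemma~\ref{Psi-P identities}(c),(d) give
\[
\kirk{G}=\kirk{H},\qquad \breaker{G}=\breaker{H}+x_e\kirk{H},
\]
so for $j\in H$ we have $\dkirk{G}{j}=\dkirk{H}{j}$ and $\dbreaker{G}{j}=\dbreaker{H}{j}+x_e\dkirk{H}{j}$, while $\dkirk{G}{e}=0$ and $\dbreaker{G}{e}=\kirk{H}$.

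Let $A_j,C$ realize $T(H)$ and set $d:=\deg\kirk{H}$. Note $d\geq 1$: if $d=0$ then every $\dkirk{H}{j}$ vanishes and the first equation of $T(H)$ would force $\breaker{H}=0$, contradicting $\deg\breaker{H}\geq 1$ from Lemma~\ref{Psi-P identities}(e). The first equation of $T(G)$ asks for $\breaker{H}+x_e\kirk{H}$ as a combination of the $\dkirk{H}{j}$, so the natural remedy is an Euler correction,
\[
A_j' := A_j+\tfrac{x_e}{d}x_j \quad (j\in H),
\]
with $A_e'$ left free since $\dkirk{G}{e}=0$. Then $\sum_{j\in H}A_j'\dkirk{H}{j}=\breaker{H}+\tfrac{x_e}{d}\cdot d\kirk{H}=\breaker{G}$ by Euler's theorem.

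Both $A_e'$ and the output constant $C'$ are pinned down by the second equation of $T(G)$. Expanding $\sum_{j\in G}A_j'\dbreaker{G}{j}$ with this choice of $A_j'$, invoking $T(H)$ to simplify $\sum_{j\in H}A_j\dbreaker{H}{j}$, and applying Euler's theorem to $\breaker{H}$ (whose degree is $d+1$), the sum collapses to
\[
\left(C+\tfrac{(2d+1)x_e}{d}\right)\breaker{H}+(x_e^2+A_e')\kirk{H}.
\]
Setting $A_e' := Cx_e+\tfrac{(d+1)x_e^2}{d}$ refactors this as $\left(C+\tfrac{(2d+1)x_e}{d}\right)(\breaker{H}+x_e\kirk{H})$, so $C' := C+\tfrac{(2d+1)x_e}{d}$ completes the verification.

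The main obstacle is ensuring consistency between the two equations of $T(G)$: the Euler correction needed for the first brings in a factor of $d$, whereas the derivatives of $\breaker{H}$ in the second contribute via $d+1$. The new edge provides exactly one free parameter $A_e'$, which is just enough to absorb the resulting degree-mismatched discrepancy and refactor the output as a scalar multiple of $\breaker{G}$.
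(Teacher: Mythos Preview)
Your proof is correct and follows essentially the same approach as the paper: both use the Euler correction $A_j'=A_j+\tfrac{x_e}{d}x_j$ for $j\in H$, then solve for $A_e'$ and $C'$ to make the second equation refactor as $C'\breaker{G}$, arriving at the identical values $C'=C+\tfrac{2d+1}{d}x_e$ and $A_e'=x_e(C'-x_e)=Cx_e+\tfrac{d+1}{d}x_e^2$. Your explicit justification that $d\geq 1$ (ruling out paths, for which $T$ fails) is a nice touch that the paper leaves implicit.
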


\begin{proof}
Let $e$ be the edge of $K_2$, let $n := \deg \kirk{H}$, and let $A_j,C$ be polynomials such that
\allstar{
\sum_{j \in H} A_j \dkirk{H}{j} &= \breaker{H} \\
\sum_{j \in H} A_j \dbreaker{H}{j} &= C \cdot \breaker{H}.
}
Note that for the polynomials $\kirk{H \bolt K_2} = \kirk{H}$ and $\breaker{H \bolt K_2} = \breaker{H} + x_e \kirk{H}$. Now let 
\[
C' := \bra{C + x_e + \frac{n+1}{n}x_e}
\]
and observe
\[
\sum_{j \in H} \bra{A_j + \frac{x_e}{n} x_j} \dkirk{H}{j}  +  x_e \bra{C' - x_e} \frac{\der}{\der x_e} (\kirk{H}) = \breaker{H} + x_e \kirk{H} + 0
\]
together with
\allstar{
& \sum_{j \in H} \bra{A_j + \frac{x_e}{n} x_j} \bra{\breaker{H} + x_e \kirk{H}}^j +  x_e \bra{C' - x_e} \frac{\der}{\der x_e} \bra{\breaker{H} + x_e \kirk{H}} \\
=& \ \  C \cdot \breaker{H} + x_e \breaker{H} + \frac{n+1}{n} x_e \breaker{H} + x_e^2 \kirk{H} + x_e \bra{C' - x_e} \kirk{H} \\
=& \bra{C + x_e + \frac{n+1}{n}x_e} \breaker{H} + C' x_e \kirk{H} \\
=& \ \ C' (\breaker{H} + x_e\kirk{H})
}
proves the result.
\end{proof}

\begin{lemma} \label{path-to-T}
Let $H$ be a series-parallel graph and $\Gamma$ a path. Then $T(H\paral \Gamma)$.
\end{lemma}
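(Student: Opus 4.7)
The key observation is that a path $\Gamma$ has an especially transparent pair of polynomials: if $\Gamma$ has edges $e_1, \ldots, e_k$ with variables $y_1, \ldots, y_k$, then $\kirk{\Gamma} = 1$ (since $\Gamma$ is a tree) and $\breaker{\Gamma} = y_1 + \cdots + y_k$ (the Kirchhoff polynomial of the $k$-cycle obtained by identifying the endpoints). Applying Lemma~\ref{Psi-P identities} to $G := H\paral \Gamma$ gives $\kirk{G} = \kirk{H}\breaker{\Gamma} + \breaker{H}$ and $\breaker{G} = \breaker{H}\,\breaker{\Gamma}$. Critically, $\dkirk{G}{e_i} = \kirk{H}$ and $\dbreaker{G}{e_i} = \breaker{H}$ are independent of the path variables, which gives us a tunable handle directly on the two factors of $\breaker{G}$.

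The plan is to use Euler's theorem on the $H$-side to manufacture a $\breaker{\Gamma}\breaker{H}$ term (together with an unwanted $\breaker{\Gamma}^2 \kirk{H}$ term), and then kill the unwanted contribution using the path variables, whose derivatives produce pure copies of $\kirk{H}$. Concretely, let $n := \deg \breaker{H}$, which is positive by Lemma~\ref{Psi-P identities}(e), so that Euler gives $\sum_{j \in H} x_j \dbreaker{H}{j} = n \breaker{H}$ and $\sum_{j \in H} x_j \dkirk{H}{j} = (n-1)\kirk{H}$. I would then set
\[
A_j := \tfrac{1}{n} x_j \breaker{\Gamma} \quad (j \in H), \qquad A_{e_i} := -\tfrac{n-1}{n}\, y_i\, \breaker{\Gamma} \quad (i = 1, \ldots, k),
\]
and take $C := \breaker{\Gamma}/n$.

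The verification is a short bookkeeping calculation. Expanding $\dkirk{G}{j} = \breaker{\Gamma}\dkirk{H}{j} + \dbreaker{H}{j}$ for $j \in H$ and applying Euler yields
\[
\sum_{j \in H} A_j \dkirk{G}{j} = \tfrac{n-1}{n}\breaker{\Gamma}^2 \kirk{H} + \breaker{\Gamma}\breaker{H},
\]
while $\sum_{i} A_{e_i} \dkirk{G}{e_i} = -\tfrac{n-1}{n}\breaker{\Gamma}\kirk{H}\sum_i y_i = -\tfrac{n-1}{n}\breaker{\Gamma}^2\kirk{H}$ (using $\sum y_i = \breaker{\Gamma}$). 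The two $\breaker{\Gamma}^2\kirk{H}$ terms cancel, leaving $\breaker{\Gamma}\breaker{H} = \breaker{G}$, which is the first identity of $T(G)$. Analogously, $\sum_{j \in H} A_j \dbreaker{G}{j} = \breaker{\Gamma}^2 \breaker{H}$ and $\sum_{i} A_{e_i} \dbreaker{G}{e_i} = -\tfrac{n-1}{n}\breaker{\Gamma}^2 \breaker{H}$, summing to $\tfrac{1}{n}\breaker{\Gamma}^2 \breaker{H} = C\cdot \breaker{G}$, which is the second identity.

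There is no substantial obstacle here, since the path variables decouple completely from $H$'s variables and the only nontrivial identities used are two applications of Euler's theorem; the scalars $1/n$ and $-(n-1)/n$ are forced by the requirement that the $\breaker{\Gamma}^2\kirk{H}$ contribution cancel. The one thing worth noting is that this whole argument relies on $n = \deg \breaker{H} \geq 1$, which is guaranteed by Lemma~\ref{Psi-P identities}(e) (so no degenerate base case arises, even when $H = K_2$).
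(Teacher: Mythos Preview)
Your proof is correct and is essentially the same as the paper's: both produce the identical certificates $A_j = \tfrac{1}{n}\breaker{\Gamma}\,x_j$ for $j\in H$, $A_{e_i} = -\tfrac{n-1}{n}\breaker{\Gamma}\,y_i$, and $C = \breaker{\Gamma}/n$ (noting that the paper's $n := \deg\kirk{G}$ equals your $n := \deg\breaker{H}$). The only cosmetic difference is that the paper applies Euler's theorem to $\kirk{G}$ and $\breaker{G}$ globally and then subtracts off the $\Gamma$-contribution, whereas you apply Euler directly to $\kirk{H}$ and $\breaker{H}$; the algebra is identical.
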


\begin{proof}
Let $G := H\paral \Gamma$, let $n := \deg(\kirk{G})$, and let $z_1, \ldots, z_m$ be the edge variables for $\Gamma$. Now
\[
\breaker{G} := (z_1 + \ldots + z_m) \breaker{P}, \midskip \kirk{G} = (z_1 + \ldots + z_m)\kirk{H} + \breaker{H}.
\]
From which it is clear that
\begin{align*}
	\breaker{G} = \ & (z_1 + \ldots + z_m)\bra{\frac{1}{n} \sum_{j \in G} x_j \dkirk{G}{j} - \sum_{j \in \Gamma} z_j \dkirk{G}{j}} 
	\intertext{ and }
	  & (z_1 + \ldots + z_m)\bra{\frac{1}{n}\sum_{j \in G} x_j \dbreaker{G}{j} - \sum_{j \in \Gamma} z_j \dbreaker{G}{j}} \\
	= \ & (z_1 + \ldots + z_m)\bra{\frac{n+1}{n} \breaker{G} - \breaker{G}} \\
	\in \ & \gen{\dbreaker{G}{j}} ,
\end{align*}
so we have $T(G)$.
\end{proof}

Now we are positioned to see what condition $T(H)$ is really for, namely guaranteeing $S(G,e)$ when $G$ is $H$ parallel joined with $e$, and more generally for series joins of such $H$s.

\begin{lemma} \label{T-series-lift}
Let $\mathcal{H}$ be a finite collection of series-parallel graphs such that $T(H)$ holds for each $H \in \mathcal{H}$. Let $G$ be the series join of all $H \in \mathcal{H}$. Then $\breaker{G} \in \gen{\dkirk{G}{j}}$. 
\end{lemma}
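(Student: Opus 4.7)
The plan is to use the explicit series-join formulas for $\kirk{G}$ and $\breaker{G}$ from Lemma~\ref{Psi-P identities} and then paste together, for each $H \in \mathcal{H}$ separately, the witness supplied by the first line of $T(H)$.

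Enumerate $\mathcal{H} = \{H_1, \ldots, H_n\}$ so that $G = H_1 \bolt H_2 \bolt \cdots \bolt H_n$. Iterating parts (c) and (d) of Lemma~\ref{Psi-P identities} yields $\kirk{G} = \prod_{i=1}^n \kirk{H_i}$ and $\breaker{G} = \sum_{i=1}^n \breaker{H_i} \prod_{k \neq i} \kirk{H_k}$. Since the edge-variable sets of the $H_i$ are pairwise disjoint in $G$, for any edge $j \in H_i$ the only factor of $\kirk{G}$ depending on $x_j$ is $\kirk{H_i}$, and hence $\dkirk{G}{j} = \dkirk{H_i}{j} \prod_{k \neq i} \kirk{H_k}$.

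For each $i$, condition $T(H_i)$ supplies polynomials $A_j^{(i)}$ indexed by $j \in H_i$ with $\sum_{j \in H_i} A_j^{(i)} \dkirk{H_i}{j} = \breaker{H_i}$. Multiplying by the factor $\prod_{k \neq i} \kirk{H_k}$, which is a polynomial in variables disjoint from those of $H_i$, gives $\sum_{j \in H_i} A_j^{(i)} \dkirk{G}{j} = \breaker{H_i} \prod_{k \neq i} \kirk{H_k}$. Summing over $i$ reproduces $\breaker{G}$ exactly, so $\breaker{G} \in \gen{\dkirk{G}{j}}$ with explicit coefficients given by the $A_j^{(i)}$.

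There is no real obstacle in this argument; the only mildly curious point is that the second line of the definition of $T$ plays no role in this lemma. I expect that second line --- the controlled image under the $\dbreaker{}{}$ partials --- to be precisely what is needed to upgrade this $\gen{\dkirk{G}{j}}$-membership statement toward an $S$-type conclusion, such as $S(G \paral e, e)$, in the corollary that presumably follows.
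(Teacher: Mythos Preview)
Your proof is correct and is essentially identical to the paper's own argument: both compute $\kirk{G}=\prod_i\kirk{H_i}$ and $\breaker{G}=\sum_i\breaker{H_i}\prod_{k\neq i}\kirk{H_k}$, observe that $\dkirk{G}{j}=\dkirk{H_i}{j}\prod_{k\neq i}\kirk{H_k}$ for $j\in H_i$, and then paste together the first-line witnesses from each $T(H_i)$. Your closing observation is also on target: the paper uses only the first line of $T$ here, and the immediate corollary is exactly that condition 1 holds for the edge of $K_2$ in $K_2\paral G$.
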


\begin{proof}

We have that
\[
\kirk{G} = \prod_{H \in \mathcal{H}} \kirk{H}, \midskip \breaker{G} = \sum_{H \in \mathcal{H}} \breaker{H} \prod_{H' \neq H} \kirk{H'}.
\]
So
\[
\gen{\dkirk{G}{j}} = \bigoplus_{H \in \mathcal{H}} \gen{\dkirk{H}{j} \prod_{H' \neq H} \kirk{H'} : j \in H}.
\]
By $T(H_i)$ we may choose $B_j^{H}$ such that
\[
\sum_{H \in \mathcal{H}} \sum_{j \in H} B_j^{H} \dkirk{G}{j} =\sum_{H \in \mathcal{H}} \breaker{H} \prod_{H' \neq H} \kirk{H'}.
\]
\end{proof}

The next corollary is an easy consequence of Lemma \ref{T-series-lift} but is not essential to the rest of the paper. 

\begin{cor} \label{T-series-lift-cor}
Let $\mathcal{H}$ be a finite collection of series-parallel graphs such that $T(H)$ holds for each $H \in \mathcal{H}$. Let $G$ be the series join of all $H \in \mathcal{H}$.  Then Condition 1 holds for (the edge of) $K_2$ in $K_2\paral G$.
\end{cor}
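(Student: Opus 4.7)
The plan is to unwind the definitions so that the condition to prove reduces directly to the conclusion of Lemma~\ref{T-series-lift}, with Euler's theorem handling a secondary term.

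First, I would compute the Kirchhoff polynomial of $K_2 \paral G$. Writing $e$ for the edge of $K_2$, we have $\kirk{K_2} = 1$ and $\breaker{K_2} = x_e$, so by Lemma~\ref{Psi-P identities}(a),
\[
\kirk{K_2 \paral G} = \breaker{G} + x_e \kirk{G}.
\]
Deleting $e$ from $K_2 \paral G$ leaves precisely the graph $G$, hence $\kirk{(K_2 \paral G)^e} = \kirk{G}$. Condition $1(K_2 \paral G, e)$ therefore asks whether
\[
\breaker{G} + x_e \kirk{G} \in \langle \partial \kirk{G} \rangle,
\]
where the Jacobian ideal is taken over all edge variables of $K_2 \paral G$. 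Since $\kirk{G}$ does not involve $x_e$, the variable $x_e$ plays no role in the generators, and the ideal equals $\langle \dkirk{G}{j} : j \in G \rangle$.

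Next, I would deal with the two summands separately. For the $x_e \kirk{G}$ term, Euler's theorem (Theorem~\ref{thm euler}) gives $\kirk{G} \in \langle \dkirk{G}{j} : j \in G \rangle$, so multiplication by $x_e$ keeps us in the ideal. For the $\breaker{G}$ term, this is exactly the content of Lemma~\ref{T-series-lift} applied to the collection $\mathcal{H}$: under the hypothesis that $T(H)$ holds for every $H \in \mathcal{H}$ and that $G$ is their series join, $\breaker{G} \in \langle \dkirk{G}{j} : j \in G \rangle$. Adding the two memberships yields the required containment.

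There is essentially no obstacle: Lemma~\ref{T-series-lift} has been engineered to supply precisely the nontrivial half of the statement, and the rest is a one-line appeal to Euler plus the identification $(K_2 \paral G)^e = G$. The only thing to be slightly careful about is checking that widening the Jacobian ideal from the variables of $G$ to all variables of $K_2 \paral G$ does not change anything for the purposes of this argument, which is immediate because $x_e$ does not appear in $\kirk{G}$.
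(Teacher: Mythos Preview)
Your proof is correct and follows exactly the paper's approach: compute $\kirk{K_2\paral G} = x_e\kirk{G} + \breaker{G}$, identify $(K_2\paral G)^e$ with $G$, and invoke Lemma~\ref{T-series-lift} for the $\breaker{G}$ term together with Euler's theorem for the $x_e\kirk{G}$ term. The paper's proof is simply more terse, writing only the formula for $\kirk{\Gamma}$ and then ``By the previous lemma Condition 1 holds for $e$ in $\Gamma$,'' leaving the Euler step and the identification $(K_2\paral G)^e = G$ implicit; you have spelled these out.
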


\begin{proof}
Let $\Gamma := K_2\paral G$ and $e$ be the edge of $K_2$. Then
\[
\kirk{\Gamma} := x_e\kirk{G} + \breaker{G}.
\]
By the previous lemma Condition 1 holds for $e$ in $\Gamma$.
\end{proof}

Next we look at the base cases for building series-parallel graphs with the simultaneous combination property. A notable obstacle in classifying series-parallel graphs satisfying simultaneous combination is the fact that a series parallel graph $G$ can decompose as $H \paral H'$ where one of $H$ or $H'$ has edges which are not regular. Lemma \ref{bridge lemma} shows how we can sometimes overcome this obstacle. The distinct cases of Lemma \ref{bridge lemma} are due to the fact that condition $T$ and simultaneous combination are sensitive to the marking of source and terminal. 

\begin{lemma} \label{bridge lemma}
Let $H := H_1 \bolt \ldots \bolt e_1 \bolt \ldots \bolt H_r$ be a series-parallel graph where $e_1 \in H$ is a bridge and let $\Gamma$ be a series-parallel graph. If any of 
\begin{enumerate}[(i)]
\item
$T(\Gamma)$ holds and $H$ is a path,
\item
$T(\Gamma)$ holds and $T(H_i)$ holds for all $i$,
\item
$\Gamma$ is a path and $T(H_i)$ holds for all $i$
\end{enumerate}
are satisfied, then $S(H\paral \Gamma,e_1)$ and $T(H\paral \Gamma)$.

\end{lemma}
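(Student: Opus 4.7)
The plan is to compute the first and second partial derivatives of $\kirk{G}$ and $\breaker{G}$ with respect to the bridge $e_1$, where $G:=H\paral\Gamma$, then to solve the defining equations of $S(G,e_1)$ and $T(G)$ by matching coefficients. Since $e_1$ is a bridge of $H$ we have $\dkirk{H}{1}=0$ and $\dbreaker{H}{1}=\kirk{H}$, so the parallel-join formulas of Lemma~\ref{Psi-P identities} give $\dkirk{G}{1}=\kirk{H}\kirk{\Gamma}$ and $\dbreaker{G}{1}=\kirk{H}\breaker{\Gamma}$, while the mixed second derivatives at an edge $j\ne e_1$ are $\dkirk{G}{1j}=\dkirk{H}{j}\kirk{\Gamma}$ and $\dbreaker{G}{1j}=\dkirk{H}{j}\breaker{\Gamma}$ when $j\in H$, and $\dkirk{G}{1j}=\kirk{H}\dkirk{\Gamma}{j}$ and $\dbreaker{G}{1j}=\kirk{H}\dbreaker{\Gamma}{j}$ when $j\in\Gamma$.

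The $T(H\paral\Gamma)$ assertion is immediate in each case. In cases~(i) and~(iii) one of the two factors is a path, so Lemma~\ref{path-to-T} applies directly (using commutativity of $\paral$). In case~(ii), $T(\Gamma)$ together with Lemma~\ref{T-parallel-lift} gives $T(\Gamma\paral H)=T(H\paral\Gamma)$.

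For $S(G,e_1)$, introducing the abbreviations $\sigma:=\sum_{j\in H,\,j\ne e_1}A_j\dkirk{H}{j}$, $\tau:=\sum_{j\in\Gamma}A_j\dkirk{\Gamma}{j}$, $\rho:=\sum_{j\in\Gamma}A_j\dbreaker{\Gamma}{j}$, the defining pair of equations becomes
\[
\kirk{H}\breaker{\Gamma}+\breaker{H}\kirk{\Gamma}=\sigma\kirk{\Gamma}+\tau\kirk{H}+B\kirk{H}\kirk{\Gamma},
\]
\[
\breaker{H}\breaker{\Gamma}=\sigma\breaker{\Gamma}+\rho\kirk{H}+C\kirk{H}\breaker{\Gamma}.
\]
I will choose $\sigma,\tau,\rho$ using the hypotheses and solve the residual equations for $B$ and $C$. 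The key identity is that when $T(H_i)$ holds for every $i$, expanding $\breaker{H}=\sum_i\breaker{H_i}\prod_{k\ne i}\kirk{H_k}+x_{e_1}\kirk{H}$ and using $\dkirk{H}{j}=\dkirk{H_i}{j}\prod_{k\ne i}\kirk{H_k}$ for $j\in H_i$ produces polynomials with $\sigma=\breaker{H}-x_{e_1}\kirk{H}$; when instead $H$ is a path I set $\sigma=0$, which works because $\kirk{H}=1$ lets the leftover $\breaker{H}\kirk{\Gamma}$ term be absorbed into $B$. On the $\Gamma$ side, $T(\Gamma)$ directly supplies $\tau=\breaker{\Gamma}$ and $\rho=C^\Gamma\breaker{\Gamma}$ from a single choice of coefficients; when $\Gamma$ is a path I set $\tau=0$ (forced) and pick $A_j:=x_{e_1}z_j$ so that $\rho=x_{e_1}\breaker{\Gamma}$. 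Substituting these choices into the two equations yields, case by case, $B=x_{e_1}$ and $C=x_{e_1}-C^\Gamma$ in case~(ii), $B=\breaker{H}$ and $C=\breaker{H}-C^\Gamma$ in case~(i), and $B=\breaker{\Gamma}+x_{e_1}$ and $C=0$ in case~(iii).

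The main obstacle is the bookkeeping needed to verify the identity $\sigma=\breaker{H}-x_{e_1}\kirk{H}$ in cases~(ii) and~(iii). This is essentially the content of Lemma~\ref{T-series-lift} adapted to a series decomposition of $H$ that contains the bridge $e_1$, which itself does not satisfy any $T$-like condition; the missing $x_{e_1}\kirk{H}$ term is exactly what can be absorbed by the coefficient $B$ (and symmetrically by $C$) in the $S$-equations, so the adjustment goes through cleanly.
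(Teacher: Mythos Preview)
Your proof is correct and follows essentially the same approach as the paper: reduce the $T$-claim to Lemmas~\ref{path-to-T} and~\ref{T-parallel-lift}, then for $S(G,e_1)$ compute the derivatives explicitly (using $\dkirk{H}{1}=0$, $\dbreaker{H}{1}=\kirk{H}$), invoke Lemma~\ref{T-series-lift} on the $H_i$ to produce the $H$-side coefficients, and use $T(\Gamma)$ or the path structure of $\Gamma$ on the other side. Your uniform $\sigma,\tau,\rho$ bookkeeping and the resulting choices of $B,C$ are slightly cleaner than the paper's case-by-case Euler adjustments (e.g.\ you take $B=x_{e_1}$ in case~(ii) where the paper arranges $B=0$), but these are cosmetic differences only.
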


\begin{proof}
Note $T(\Gamma)$ implies $T(\Gamma \paral H)$ so this condition is satisfied in cases $(i)$ and $(ii)$. Let $G := H\paral \Gamma$. We prove $S(G,e_1)$ holds case by case. 

\begin{enumerate}[(i)]
\item
Note that
\[
\kirk{G} = ( x_1 + \ldots + x_n)\kirk{\Gamma} + \breaker{\Gamma}, \midskip \breaker{G} = ( x_1 + \ldots + x_n) \breaker{\Gamma}.
\]
Let $A_j,C$ be the polynomials as in $T(\Gamma)$. Then
\[
\sum_{j \in \Gamma} A_j \dkirk{G}{1j} + ( x_1 + \ldots + x_n)\dkirk{G}{1}= \kirk{G}
\]
and
\[
\sum_{j \in \Gamma} A_j \dbreaker{G}{1j} + (( x_1 + \ldots + x_n)-C)\dbreaker{G}{1} = \breaker{G}
\]
so we are done.

\item
We observe the claim is equivalent to proving $S( (H_1 \bolt \ldots \bolt H_r \bolt e_1)\paral \Gamma,e_1)$ since the relevant polynomials associated to these two graphs are identical. Let us relabel $H := H_1 \bolt \ldots \bolt H_r$ and $G := (H \bolt e_1) \paral \Gamma$. Now
\allstar{
\kirk{G} &= \kirk{\Gamma}(\kirk{H}x + \breaker{H}) + \kirk{H}\breaker{\Gamma} \\
\breaker{G} &= \breaker{\Gamma}(\kirk{H} x + \breaker{H}).
}

Choose $B_j,D$ as in $T(\Gamma)$. Since $T(H_i)$ for all $i$ we have by Lemma \ref{T-series-lift} that there are $C_j$ such that 
\[
\sum_{j \in H} C_j \dkirk{H}{j} = \breaker{H}.
\]
Letting $m := \deg \Gamma$ we verify that
\[
D' := -\bra{D + \frac{x}{m}}, \midskip A_j := \piecewise{\frac{x x_j}{m} + B_j}{j \in \Gamma}{C_j}
\]
are certificates for  $S(H\paral\Gamma,x)$:
\allstar{
\sum_{j \in G} A_j \dkirk{G}{1j} &= \sum_{j \in H} C_j \kirk{\Gamma}\dkirk{H}{j} + \sum_{j \in \Gamma} \bra{\frac{x x_j}{m} + B_j} \dkirk{\Gamma}{j}{\kirk{H}} \\
&= \kirk{\Gamma}\breaker{H} + x\kirk{\Gamma}\kirk{H} + \breaker{\Gamma} \kirk{H}.
}
Meanwhile
\allstar{
	\sum_{j \in G} A_j \dbreaker{G}{1j} - D \dbreaker{G}{1} &= \sum_{j \in H} C_j \breaker{\Gamma}\dkirk{H}{j} + \sum_{j \in \Gamma} \bra{\frac{x x_j}{m} + B_j} \dbreaker{\Gamma}{j}{\kirk{H}} - \bra{D + \frac{x}{m}} \breaker{\Gamma} \kirk{H}\\
	&= \breaker{\Gamma} \, \breaker{H} + x\breaker{\Gamma}\kirk{H}.
}

\item
Again we observe the claim is equivalent to proving $S( (H_1 \bolt \ldots \bolt H_r \bolt e_1)\paral \Gamma,e_1)$, so we relabel $H := H_1 \bolt \ldots \bolt H_r$.  Write $\breaker{\Gamma} = z_1 + \ldots + z_m$ and note
\allstar{
\kirk{G} &= \kirk{H}(z_1 + \ldots + z_m) + (x_e \kirk{H} + \breaker{H}) \\
\breaker{G} &= (x_e \kirk{H} + \breaker{H})(z_1 + \ldots + z_m).
}
Since $T(H_i)$ holds for all $i$ we may choose $A_j$ as in Lemma \ref{T-series-lift}. Now let
\allstar{
B_j &:= \piecewise{A_j}{j \in H}{0} \\
C    &:= (z_1 + \ldots + z_m + x_e), \midskip D := x_e.
}
Then
\allstar{
\sum_{j \in G} B_j \dkirk{G}{ej} + C\dkirk{G}{e} &= \breaker{H} + (z_1 + \ldots + z_m + x_e)\kirk{H} \\
\sum_{j \in G} B_j \dbreaker{G}{ej} + D \dbreaker{G}{e} &= (z_1 + \ldots + z_m)\breaker{H} + x_e(z_1 + \ldots + z_m)\kirk{H}  
}
and so $S(G,e_1)$ holds. In this case we also need to show $T(G)$, but this follows from Lemma \ref{path-to-T}. 
\end{enumerate}
\end{proof}

Cycles are graphs which behave very nicely under series and parallel joins despite the fact that condition 1 is false for every edge. It will be convenient to use cycles as building blocks for larger graphs satisfying simultaneous combination.

\begin{lemma} \label{cycle lemma}
Let $H$ be a series-parallel graph which is a cycle and $\Gamma$ be any series-parallel graph. Then $T(H)$, $T(H \paral \Gamma)$, and $S(H\paral \Gamma, e)$ holds for any edge $e\in H$. If $\Gamma$ is not a path then $S(H \bolt \Gamma, e)$ as well.
\end{lemma}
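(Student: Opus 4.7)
My plan is to decompose $H$, which as a series-parallel graph has a distinguished source $s$ and terminal $t$, as $H = P_1 \paral P_2$, where $P_1, P_2$ are the two internally disjoint $s$-$t$ paths forming the cycle. By symmetry in the two paths we may assume without loss of generality that the given edge $e$ lies in $P_1$.

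For the two $T$ statements, the key observation is that $P_2$ is a path. So Lemma~\ref{path-to-T}, applied with its ``$H$'' being $P_1$ and its ``$\Gamma$'' being $P_2$, immediately yields $T(P_1 \paral P_2) = T(H)$. Then $T(H \paral \Gamma)$ follows from Lemma~\ref{T-parallel-lift} applied to $T(H)$.

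For the two $S$ statements, I will establish the stronger condition $S(H,e)$ directly and then invoke Lemma~\ref{S-lift} to deduce both $S(H\paral\Gamma,e)$ and $S(H\bolt\Gamma,e)$ for any series-parallel $\Gamma$. To see $S(H,e)$, observe that because every spanning tree of a cycle omits exactly one edge, $\kirk{H} = \sum_{i\in H} x_i$ is linear in each variable; consequently $\dkirk{H}{e} = 1$ and every second partial $\dkirk{H}{ej}$ vanishes. Meanwhile Lemma~\ref{Psi-P identities}(b) gives $\breaker{H} = \breaker{P_1}\,\breaker{P_2}$, so $\dbreaker{H}{e} = \breaker{P_2}$ because $e \in P_1$. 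Choosing $A_j = 0$ for every $j$, $B = \kirk{H}$, and $C = \breaker{P_1}$, both defining equations of $S(H,e)$ in Definition~\ref{simultaneous combination} reduce to trivial identities.

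There is no substantial obstacle here: linearity of $\kirk{H}$ trivialises the first equation of Definition~\ref{simultaneous combination} (satisfied by $B$ alone), and the factorisation of $\breaker{H}$ trivialises the second (satisfied by $C$ alone). I observe that the argument actually delivers $S(H\bolt\Gamma,e)$ unconditionally on $\Gamma$, so the hypothesis ``$\Gamma$ is not a path'' in the statement is not needed for this route; presumably the authors include it because the expected application uses Lemma~\ref{bridge lemma} rather than the direct verification above.
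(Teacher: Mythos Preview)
Your argument is correct, and for the two $T$ statements it coincides with the paper's proof. For the two $S$ statements you take a genuinely different and shorter route. The paper obtains $S(H\paral\Gamma,e)$ by writing $H\paral\Gamma = \P\paral(\P'\paral\Gamma)$, invoking Lemma~\ref{path-to-T} to get $T(\P'\paral\Gamma)$, and then applying case~(i) of Lemma~\ref{bridge lemma}; for $S(H\bolt\Gamma,e)$ it writes out the polynomials explicitly and exhibits coefficients $B_j,C$ by hand, with a $\frac{1}{k}$ factor where $k=\deg\kirk{\Gamma}$. You instead observe that $S(H,e)$ is immediate for a cycle (because $\dkirk{H}{e}=1$, $\dkirk{H}{ej}=0$, and $\breaker{H}$ factors as $\breaker{P_1}\,\breaker{P_2}$), and then apply Lemma~\ref{S-lift} once to get both conclusions at a stroke. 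Your diagnosis of the ``$\Gamma$ not a path'' hypothesis is essentially right but the specific reason is different: the paper's explicit series computation divides by $k=\deg\kirk{\Gamma}$, which vanishes exactly when $\Gamma$ is a path, whereas the denominator $n+m-2$ in Lemma~\ref{S-lift} is at least $1$ here since $n=\deg\breaker{H}\geq 2$ for a cycle. So your route indeed removes that hypothesis. What the paper's approach buys is that it avoids appealing to the relatively heavy Lemma~\ref{S-lift} for the series case; what yours buys is brevity and a slightly stronger conclusion.
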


\begin{proof}
Write $H := \P \P'$ with $\P, \P'$ paths. Let $\Gamma$ be a series-parallel graph. By Lemma \ref{path-to-T} we get that all of $T(\P\paral \Gamma)$, $T(\P'\paral  \Gamma)$, and $T(H)$ hold. Thus if $e \in \P$ we have $S(\P \paral (\Gamma \paral \P'), e)$ by Lemma \ref{bridge lemma}. Moreover, $T(\Gamma\paral  \P)$ implies $T(\Gamma\paral H)$ by Lemma \ref{T-parallel-lift}. 

Finally we consider series join.  Let $G := H \bolt \Gamma$, let $x_i$ denote the edge variables of $\P$, and let $y_i$ denote the edge variables of $\P'$. Note
	\allstar{
		\breaker{G} &= (x_1 + \ldots + x_n)(y_1 + \ldots + y_m)\kirk{\Gamma} + ( x_1 + \ldots + x_n + y_1 + \ldots + y_m)\breaker{\Gamma} \\
		\kirk{G} &= ( x_1 + \ldots + x_n + y_1 + \ldots + y_m)\kirk{\Gamma} \\
		&= \kirk{H} \kirk{\Gamma}.
	}
Then
	\allstar{
		\gen{\dkirk{G}{1j}} &= \gen{ 0, \dkirk{\Gamma}{j}} \\
		\gen{\dbreaker{G}{1j}} &= \gen{0,\kirk{\Gamma}, (y_1 + \ldots + y_m)\dkirk{\Gamma}{j} + \dbreaker{\Gamma}{j}}.
	}
Let $k := \deg{\kirk{\Gamma}}$. We verify that the choice of
	\allstar{
		B_j &:= 
		\begin{cases}
			\frac{1}{k}\kirk{H} x_j & \text{if }j \in \Gamma \\
			\breaker{H} - (y_1 + \ldots + y_m)(1-\frac{1}{k})\kirk{H} & \text{if } x_j = y_1 \\
			0 & \text{otherwise }
		\end{cases} \\
		C &:= -\frac{1}{k}\kirk{H}
	}
works as a certificate for $S(G,e_1)$. We see that
	\allstar{
		\sum_{j \in G} B_j \dkirk{G}{1j} = \sum_{j \in \Gamma} \frac{1}{k}\kirk{H} x_j \dkirk{\Gamma}{j} + 0
	}
and that
	\allstar{
		& \ \ \sum_{j \in G} B_j \dbreaker{G}{1j} + C \dbreaker{G}{1} \\
		&= \sum_{j \in \Gamma} \frac{1}{k}\kirk{H} x_j \bra{(y_1 + \ldots + y_m)\dkirk{\Gamma}{j} + \dbreaker{\Gamma}{j}} + \bra{\breaker{H} - (y_1 + \ldots + y_m)\bra{1-\frac{1}{k}}\kirk{H}} \kirk{\Gamma} \\
		& \ \ - \frac{1}{k}\kirk{H} \bra{(y_1 + \ldots + y_m)\kirk{\Gamma} + \breaker{\Gamma}} \\ 
		&= (y_1 + \ldots + y_m) \kirk{H} \kirk{\Gamma} + \frac{k+1}{k}\kirk{H}\breaker{\Gamma} + \breaker{H} \kirk{\Gamma} - (y_1 + \ldots + y_m)\bra{1-\frac{1}{k}}\kirk{H} \kirk\Gamma \\
		& \ \ - \frac{1}{k}\kirk{H} \bra{(y_1 + \ldots + y_m)\kirk{\Gamma} + \breaker{\Gamma}} \\ 
		&=  \kirk{H}\breaker{\Gamma} + \breaker{H} \kirk{\Gamma} + \bra{1 - \bra{1-\frac{1}{k}} - \frac{1}{k}}(y_1 + \ldots + y_m)\kirk{H}\kirk{\Gamma}.
	}
So we are done.
\end{proof}

\begin{cor} \label{cyclepaths}
Let $H$ be a cycle, let $\P$ be a path, let $M := H \bolt \P$, and let $\Gamma$ be a series-parallel graph. Then
	\begin{enumerate}[(i)]
		\item
		$T(M)$,
		\item
		$T(M \paral \Gamma)$,
		\item
		$S(M \paral \Gamma, e)$ for all $e \in H$, and
		\item
		if $T(\Gamma)$ then $S(M \paral \Gamma,e)$ for all $e \in M$. 
	\end{enumerate}
\end{cor}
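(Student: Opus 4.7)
Write $N := M\paral \Gamma$, decompose the cycle as $H = \P_1 \paral \P_2$ (with $\P_1, \P_2$ paths), and let $X$, $Y$, $Z$ denote the sums of edge variables of $\P_1$, $\P_2$, $\P$, respectively. Then $\kirk{H} = X+Y$, $\breaker{H} = XY$, $\kirk{\P} = 1$, $\breaker{\P} = Z$, so $\kirk{M} = X+Y$ and $\breaker{M} = XY + (X+Y)Z$. Part (i) follows from $T(H)$ (Lemma \ref{cycle lemma}) by applying Lemma \ref{T-path-lift} once for each $K_2$ in the series decomposition of $\P$; part (ii) is then immediate from Lemma \ref{T-parallel-lift}.

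For (iii), the plan is to verify $S(M,e)$ directly and then invoke the parallel-join case of Lemma \ref{S-lift}. Placing $e \in \P_1$ without loss of generality, the verification is very short because $\kirk{M} = X+Y$ is linear: $\dkirk{M}{e} = 1$ and $\dkirk{M}{ej} = 0$ for every $j \neq e$, so the first equation of Definition \ref{simultaneous combination} is forced and satisfied by $B = X+Y$. For the second equation, $\dbreaker{M}{e} = Y+Z$ and $\dbreaker{M}{ej} = 1$ for every $j \in \P_2 \cup \P$; taking $C = X$ together with the single nonzero coefficient $A_{y_1} = YZ$ on any $y_1 \in \P_2$ rebuilds $\breaker{M} = XY + XZ + YZ$. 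Lemma \ref{S-lift} then upgrades $S(M,e)$ to $S(N,e)$. The only subtle point is that Lemma \ref{cycle lemma} cannot be cited to obtain $S(H\bolt \P, e)$, since its series-join case explicitly excludes $\Gamma$ being a path (the proof there would divide by $\deg \kirk{\P} = 0$); we sidestep this precisely by exploiting the collapse $\kirk{\P} = 1$, which makes the direct check trivial.

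For (iv), the case $e \in H$ is (iii). For $e \in \P$, write $\P = \P_a \bolt e \bolt \P_b$ (either side possibly empty) and set $M' := (H \bolt \P_a \bolt \P_b) \bolt e$. Because the one- and two-vertex-cut formulas of Lemma \ref{Psi-P identities} are symmetric in their two inputs, reordering a series chain leaves $\kirk{M}$ and $\breaker{M}$ unchanged, so $\kirk{N} = \kirk{M'\paral \Gamma}$, $\breaker{N} = \breaker{M'\paral \Gamma}$, and consequently $S(N,e) \Leftrightarrow S(M'\paral \Gamma, e)$. In $M'$ the edge $e$ appears as a bridge appended to $H \bolt (\P_a \bolt \P_b)$, which is of the cycle-plus-path form covered by (i) (and degenerates to $H$ itself when both $\P_a$ and $\P_b$ are empty, for which $T$ holds by Lemma \ref{cycle lemma}). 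Together with the hypothesis $T(\Gamma)$ this is exactly the input required by Lemma \ref{bridge lemma}(ii) (taking $r = 1$ in that lemma), which delivers $S(M'\paral \Gamma, e)$, and hence $S(N,e)$.
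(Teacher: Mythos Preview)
Your proof is correct. Parts (i), (ii), and (iv) match the paper's approach exactly; in (iv) you are simply more explicit than the paper about moving $e$ to the end of the series chain and taking $r=1$ so that the single block $H_1 = H\bolt\P_a\bolt\P_b$ satisfies $T$ by part (i) (this is indeed necessary, since $T$ fails for the individual path edges).

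The genuine difference is in (iii). The paper works directly with $G = M\paral\Gamma$: it writes out $\kirk{G}$, $\breaker{G}$, $\dkirk{G}{e}$, $\dbreaker{G}{e}$ explicitly in terms of $\kirk{\Gamma}$ and $\breaker{\Gamma}$ and then exhibits certificates $A_j$, $B$, $C$ for $S(G,e)$ in one shot. You instead prove $S(M,e)$ first, exploiting the fact that $\kirk{M} = X+Y$ is linear so every second derivative $\dkirk{M}{ej}$ vanishes; this makes the certificate nearly trivial, and then Lemma~\ref{S-lift} transports it to $M\paral\Gamma$. Your route is cleaner and more modular: it separates the combinatorics of the cycle-plus-path piece from the parallel join, and avoids ever touching $\kirk{\Gamma}$ or $\breaker{\Gamma}$. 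The paper's direct computation, on the other hand, produces an explicit certificate at the level of $G$ itself, of the kind actually used in the worked example at the end of the paper; following your route one would obtain that certificate only implicitly, by chasing through the proof of Lemma~\ref{S-lift}.
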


\begin{proof}
We obtain $T(M)$ by repeatedly invoking Lemma \ref{T-path-lift} and $T(M \paral \Gamma)$ from Lemma \ref{T-parallel-lift}. Denote $G := M \paral \Gamma$. If $e \in \P$ and $T(\Gamma)$ then we have $S(G,e)$ from Lemma \ref{bridge lemma}. Therefore (iv) follows from (iii) and all that is left to prove is (iii). If $e \in H$, let $n := \deg(\breaker{\Gamma})$ and write
\begin{gather*}
\breaker{H} = (e + x_1 + \ldots + x_r)(y_1 + \ldots + y_m) \\
 \kirk{H} =  e + x_1 + \ldots + x_r + y_1 + \ldots + y_m \\
\breaker{\P} = (z_1 + \ldots + z_k), \midskip \kirk{\P} = 1.
\end{gather*}
Then
\allstar{
\kirk{G} &= \kirk{H}\breaker{\Gamma} + \bra{\breaker{H} +\kirk{H} (z_1 + \ldots + z_k)}\kirk{\Gamma} \\
\breaker{G} &= \breaker{M}\,\breaker{\Gamma} = \bra{\breaker{H} +\kirk{H} (z_1 + \ldots + z_k)}\breaker{\Gamma} \\
\dkirk{G}{e} &= \breaker{\Gamma} + \bra{y_1 + \ldots + y_m + z_1 + \ldots + z_k}\kirk{\Gamma} \\
\dbreaker{G}{e} &=  \bra{y_1 + \ldots + y_m + z_1 + \ldots + z_k}\breaker{\Gamma}.
}

We notice $\dkirk{G}{ey_1} = \kirk{\Gamma}$, $\dbreaker{G}{ey_1} = \breaker{\Gamma}$, that
\begin{align*}
	& \ \ \ \  \sum_{j \in \Gamma} x_j \kirk{H} \dkirk{G}{ej} - (n-1)\kirk{H}\dkirk{G}{e} \\
	&=  n\kirk{H}\breaker{\Gamma} + (n-1)\kirk{H} \bra{y_1 + \ldots + y_m + z_1 + \ldots + z_k}\kirk{\Gamma}  - (n-1)\kirk{H}\dkirk{G}{e} \\
	&= \kirk{H}\breaker{\Gamma}, 
	\intertext{and that}
	& \ \ \ \  \sum_{j \in \Gamma} x_j \kirk{H} \dbreaker{G}{ej} \\
	&=  n\kirk{H}\bra{y_1 + \ldots + y_m + z_1 + \ldots + z_k}\breaker{\Gamma}\\
	&= n\kirk{H}\dbreaker{G}{e}.
\end{align*}
So it follows
\begin{gather*}
A_j := \piecewiseThree{x_j\kirk{H}}{j \in \Gamma}{\breaker{H\bolt\P}}{j = y_1}{0} \\
B := -(n-1)\kirk{H}, \midskip C := -n\kirk{H}
\end{gather*}
serve as witnesses for $S(G,e)$.
\end{proof}

We are now ready to give a class of series-parallel graphs which satisfy condition 1.  A key class of graphs is those which have a planar embedding where one of the faces gives a Hamiltonian cycle, i.e. the cycle defined by the face includes all the vertices of the graph.

For convenience in what follows we make the following definition.

\begin{definition}
We call the operation $\paral^e(G) := G \paral e$ the \emph{restricted parallel join}. Similarly, we call $\bolt^e(G) := e \bolt G$ and $\bolt_e(G) := G \bolt e$  the \emph{restricted series joins}.
\end{definition}

\begin{remark} \label{ham arc-diagram}
Let $G$ be a graph with a planar embedding that has a Hamiltonian cycle as a facial cycle and let $s,t$ be two vertices which are consecutive on this cycle (joined by the edge $e$). We may always view $G$ as a non-crossing arc diagram with the Hamilton path from $s$ to $t$ as a horizontal line segment whose left endpoint is $s$ and right endpoint $t$ and all other edges as arcs above this line segment. Since the aforementioned Hamilton path together with the edge $e$ form the Hamiltonian face then the arc $e$ is always the outermost arc. In such an arc diagram drawing the Hamiltonian face is the unbounded face. 


\end{remark}


\begin{lemma}
Let $G$ be a graph that has a planar embedding with a Hamiltonian cycle as a facial cycle and let $s,t \in G$ be vertices consecutive on the Hamiltonian face. Then there is an isomorphism $\phi$ to a series-parallel graph $(\wtilde G,s',t')$ built out of $\paral^e$ and $\bolt$ such that $\phi(s) = s'$ and $\phi(t) = t'$. Conversely, if $(G,s,t)$ is a series-parallel built out of $\paral^e$ and $\bolt$ with no cut vertex then $G$ has a planar embedding with a Hamiltonian cycle as a facial cycle where $s,t$ are consecutive on the Hamiltonian face.
\end{lemma}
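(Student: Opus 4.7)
The plan is to prove both directions inductively by exploiting the arc-diagram representation from Remark~\ref{ham arc-diagram}.

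For the forward direction I would first establish a slightly more general helper claim, which is cleaner to induct on: for any non-crossing arc diagram with horizontal path $v_0 = s, v_1, \ldots, v_k = t$ (not assuming any outer arc), the associated graph (path edges plus the arcs above) is series-parallel, built from $K_2$ using only $\paral^e$ and $\bolt$, with source $s$ and terminal $t$. I would induct on the total number of edges. Either (i) some arc spans from $s$ to $t$, in which case I pick such an arc $f$, write the graph as $\paral^e$ applied to what remains after deleting $f$, and appeal to induction on the smaller arc diagram; or (ii) no arc spans from $s$ to $t$. The key combinatorial observation in case (ii) is that some interior vertex $v_i$ with $0 < i < k$ is not strictly contained in any arc. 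Indeed, outermost arcs cannot nest, so non-crossing forces their intervals to be pairwise disjoint up to shared endpoints; since no single arc spans $s$ to $t$, either the outermost arcs leave a gap in $\{v_1,\ldots,v_{k-1}\}$ or one of their endpoints falls strictly between $s$ and $t$, and in either case such a $v_i$ exists. Then no arc straddles $v_i$, the arc diagram decomposes at $v_i$, and the graph is the series join of the two sub-arc-diagrams on $\{v_0,\ldots,v_i\}$ and $\{v_i,\ldots,v_k\}$; induction finishes each side.

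The forward direction of the lemma is then immediate: the hypothesized planar embedding produces an arc diagram in which $e$ is outermost (and in particular spanning), so applying case (i) yields $G = \paral^e(G\smallsetminus e)$, and the helper claim decomposes $G\smallsetminus e$. The isomorphism $\phi$ with $\phi(s) = s'$ and $\phi(t) = t'$ is tracked through this construction, since both operations preserve the source-terminal data.

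For the backward direction I would induct on the decomposition tree of $\wtilde G$. The useful strengthened inductive hypothesis is: for every $(G, s, t)$ built from $K_2$ by $\paral^e$ and $\bolt$, there is a planar embedding in which $s$ and $t$ lie on a distinguished ``outer'' face whose closed boundary walk visits every vertex (a Hamiltonian closed walk). This property is preserved by both operations: for $\bolt$, glue the two given embeddings at the shared vertex so that the two outer walks concatenate; for $\paral^e$, draw the new edge $e$ inside the current outer face from $s$ to $t$, which splits the outer face into two regions with $e$ on the boundary of each, and either region may be chosen as the new outer face. To recover the stated conclusion one then observes that the Hamiltonian outer walk is a simple cycle exactly when no vertex gets repeated, and this happens precisely when $G$ has no cut vertex: each $\bolt$ operation contributes a shared vertex that appears twice in the outer walk and is a cut vertex of the joined graph, so in a cut-vertex-free $G$ the top-level operation must be $\paral^e$; then the edge $e$ added at that step lies on the Hamiltonian cycle, making $s$ and $t$ consecutive on the Hamiltonian face.

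The main obstacle I anticipate is in the backward direction: pinning down the precise correspondence between repetitions of vertices in the outer-face walk and cut vertices of $G$, and verifying inductively that when $G$ has no cut vertex the outer-face walk is actually a simple Hamilton cycle (not merely a closed walk covering all vertices). The forward direction should be mostly mechanical once the uncovered-interior-vertex claim in case (ii) is in place.
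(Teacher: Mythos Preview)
Your forward direction matches the paper's approach and is correct. Your helper claim for arbitrary arc diagrams and the paper's decomposition into biconnected components after removing the outer arc are the same recursion organized slightly differently (your case~(ii) split at an uncovered interior vertex is exactly a split at a cut vertex of $G\smallsetminus e$).

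The backward direction has a real gap. Your intermediate invariant --- a planar embedding whose outer-face boundary walk visits every vertex --- is not strong enough to be preserved by $\paral^e$ as you describe. When you insert $e$ into the outer face, the two new faces have boundaries that partition the old boundary walk at the chosen occurrences of $s$ and $t$; for one of them to still visit every vertex you need every vertex to lie on a single $s$--$t$ segment of the old walk, and the invariant as stated does not give you that. Your sentence ``either region may be chosen as the new outer face'' is exactly where this is swept under the rug. The remedy is to carry the stronger invariant that there is a Hamiltonian $s$--$t$ \emph{path} along the outer boundary (equivalently, an arc-diagram embedding with $s$ leftmost and $t$ rightmost): this is preserved by $\bolt$ (concatenate the two paths) and by $\paral^e$ (the path together with the new arc bound a single face, which you take as the new outer face), and then your last step becomes automatic --- no cut vertex forces the top operation to be $\paral^e$, and the face just produced is already a simple Hamiltonian cycle with $s,t$ adjacent along $e$, so your anticipated ``main obstacle'' dissolves. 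The paper reaches the same endpoint by a somewhat different induction: it recurses directly on the cut-vertex-free blocks $H_i$ appearing in $G = (H_1 \bolt \cdots \bolt H_n)\paral e$ rather than maintaining an invariant over all intermediate graphs.
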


\begin{proof}

\begin{enumerate}

\item[$(\implies)$]

Let $\pi: G \ra \mbc$ be a planar embedding with a Hamiltonian face and let $s,t \in G$ be two consecutive vertices on the Hamiltonian face. Then as in Remark \ref{ham arc-diagram} we may view $G$ as a non-crossing arc diagram with $s$ the leftmost vertex, $t$ the rightmost vertex, and $e$ the outermost arc which completes the Hamiltonian face. 

Remove the arc $e$ from $G$. Then the biconnected components of the resulting graph are either single edges or are embedded in the plane as non-crossing arc diagrams with an arc connecting the leftmost and rightmost vertices (this is still true even when $e$ has a parallel edge in $G$). Let $H$ be one of these biconnected components. 

If $H$ is just a single edge then it is a series parallel graph. Otherwise, $H$ is already embedded as a non-crossing arc diagram such that all of the vertices of $H$ lie on a horizontal path with an arc connecting the leftmost vertex $s'$ and rightmost vertex $t'$. In particular, $(H,s',t')$ is a smaller graph with a marked pair of vertices satisfying the hypothesis, so by an inductive argument we see that $H$ is a series-parallel graph with source $s'$, terminal $t'$, and is built using only the prescribed operations. We now recover $G$ in two steps:
\begin{enumerate}[(i)]
\item
Taking the series join of all biconnected components.

\item
Taking the (restricted) parallel join with the single arc $e$

\end{enumerate}

In particular $G$ is series-parallel with source the leftmost vertex of the first biconnected component, which is $s$, and terminal the rightmost vertex of the last biconnected component, which is $t$.

\item[$(\impliedby)$]

Let $(G,s,t)$ be a series-parallel graph satisfying the criterion of the lemma. We prove the result by induction on $\min \set{\height{\Upsilon}}$, with $\Upsilon$ running over the decomposition trees for $(G,s,t)$. Fix $\Upsilon$ a decomposition tree of smallest height for $G$.   

If $\height \Upsilon = 0$ then $G = K_2$. If $\height \Upsilon = 1$ then since $G$ has no cut vertex $G = K_2\paral K_2$, for which the result is clear. Assuming the result for all $\height \Upsilon \leq k$ we prove the result for height $k+1$. Since $G$ has no cut vertex the root of $\Upsilon$ cannot be a $\bolt$ operation. Thus $G = H\paral e$, where
\[
H = H_1 \bolt H_2 \bolt \ldots \bolt H_n
\]
for some $n \geq 1$ and each $H_i$ is either $K_2$ or does not have a cut vertex. But each $H_i$ is built out of only $\set{\bolt, \paral^e}$ and has height strictly smaller than $k+1$, so by the induction hypothesis any $H_i$ has an arc diagram embedding as in Remark \ref{ham arc-diagram}. Concatenating the arc diagrams of the $H_i$ and adding $e$ as an outer arc above the rest shows that $G$ has a face which is a Hamiltonian cycle, namely, the outer face given by the horizontal path in the arc diagram and the arc $e$. In particular $s$ and $t$ are consecutive on this face.

\end{enumerate}
\end{proof}

We recall the definition of the $\Upsilon$-dual from Definition~\ref{def upsilon} to state the next result, which gives a criterion for condition $T$ on series-parallel graphs.

\begin{cor} \label{T-for-co-Hamiltonian}
Let $G$ be a graph with a planar embedding with a Hamiltonian face and let $s,t \in G$ be consecutive on the Hamilton face. Invoking the previous lemma fix a decomposition tree $\Upsilon$ for $(G,s,t)$. If $V(G) = \set{s,t}$ then the $\Upsilon$-dual is a path. Otherwise, the $\Upsilon$-dual is a series-parallel graph satisfying condition $T$.
\end{cor}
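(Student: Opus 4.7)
The plan is to prove the corollary by induction on the height of the decomposition tree $\Upsilon$, using the fact (guaranteed by the previous lemma) that $\Upsilon$ is built only from the operations $\paral^e$ and $\bolt$, so that $\Upsilon^\vee$ is built only from $\paral$ and $\bolt^e$. The base case $\height(\Upsilon) = 0$ is trivial: here $G = K_2$, so $V(G) = \set{s,t}$ and the dual is $K_2$, which is a path. For the inductive step, the root of $\Upsilon$ is either $\paral^e$ or $\bolt$, and I would split into two cases accordingly.

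If the root is $\paral^e$, then $G = H \paral e$ where $H$ is a series-parallel graph with source $s$, terminal $t$, and a decomposition tree of strictly smaller height, still built from $\paral^e$ and $\bolt$. Dualizing, $G^\vee = H^\vee \bolt e$. If $V(H) = \set{s,t}$, then $V(G) = \set{s,t}$ and by the induction hypothesis applied to $H$, the dual $H^\vee$ is a path; adjoining another edge in series gives that $G^\vee$ is a path as well. Otherwise $V(G) \supsetneq \set{s,t}$ and by induction $T(H^\vee)$ holds, from which Lemma~\ref{T-path-lift} immediately yields $T(H^\vee \bolt K_2) = T(G^\vee)$.

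If the root is $\bolt$, then $G = G_1 \bolt G_2$ with cut vertex $v$ distinct from both $s$ and $t$, so automatically $V(G) \supsetneq \set{s,t}$ and we are in the second conclusion. The dual is $G^\vee = G_1^\vee \paral G_2^\vee$. Applying the induction hypothesis to each $G_i$: if $V(G_i) = \set{s,v}$ (resp.\ $\set{v,t}$) then $G_i^\vee$ is a path, and otherwise $T(G_i^\vee)$ holds. If at least one of $G_1^\vee, G_2^\vee$ is a path, Lemma~\ref{path-to-T} gives $T(G^\vee)$ directly; if both satisfy $T$, Lemma~\ref{T-parallel-lift} applied to either factor gives $T(G^\vee)$. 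Either way $T(G^\vee)$ holds, completing the induction.

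The main subtlety, rather than a genuine obstacle, is bookkeeping: one must check that the source/terminal markings are correctly transported through dualization so that the polynomials $\kirk{G^\vee}$ and $\breaker{G^\vee}$ are the ones to which Lemmas~\ref{T-path-lift}, \ref{path-to-T}, and \ref{T-parallel-lift} apply. This follows from the fact that dualizing the decomposition tree preserves the two distinguished leaves (equivalently, the outer arc $e$ in the arc diagram of Remark~\ref{ham arc-diagram}), so the source and terminal of $G^\vee$ are inherited from those of $G$ in a way compatible with the cited lemmas. Once this is set up, the induction goes through mechanically because the lemmas have been tailored exactly to cover the two possible root operations.
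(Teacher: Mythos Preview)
Your proof is correct and follows essentially the same approach as the paper: both arguments use that $\Upsilon^\vee$ is built only from $\paral$ and restricted series joins $\bolt^e$, and then invoke Lemmas~\ref{T-path-lift}, \ref{T-parallel-lift}, and \ref{path-to-T} to propagate condition~$T$. The only difference is organizational: you run a top-down induction on $\height(\Upsilon)$ splitting on the root operation, whereas the paper argues bottom-up by locating a $\paral$ node in $\Upsilon^\vee$ furthest from the root, observing that the subtree there builds a cycle (so $T$ holds), and then lifting $T$ upward through the remaining $\paral$ and $\bolt^e$ operations.
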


\begin{proof}
Let $G^\vee$ be the $\Upsilon$-dual of $G$. By definition of the $\Upsilon$-dual we see that $G^\vee$ is a series-parallel graph built out of $\set{\paral, \bolt^e, \bolt_e}$. In particular, condition $T$ is stable under these operations by Lemma \ref{T-path-lift} and Lemma \ref{T-parallel-lift}. If $G^\vee$ is not a path then $\Upsilon^\vee$ contains at least one $\paral$ operation, so there must be one furthest from the root (under the obvious partial order). The subtree rooted at this $\paral$ builds a cycle, so in particular $T$ holds for this cycle. As previously mentioned, $T$ then extends to $G^\vee$. Otherwise, $\Upsilon$ contains only $\paral$ operations, so $V(G) = \set{s,t}$. 
\end{proof}

We are finally ready to produce a combinatorial condition on a graph which implies simultaneous combination and whence condition 1 for every edge.



\begin{cor} \label{S-for-co-Hamiltonians}
Let $H_i^\vee$ for $2 \leq i \leq n$ be graphs which either have a Hamiltonian face with two consecutive marked vertices, or are paths. View each of these as a series-parallel graph and let $G^\vee := H_1^\vee \bolt \ldots \bolt H_n^\vee$. Fix an arbitrary decomposition tree $\Upsilon^\vee$ for $G^\vee$ and let $G$ be the $\Upsilon^\vee$-dual. If $G^\vee$ has at least 4 vertices then $S(G,e)$ holds for every $e \in G$.   
\end{cor}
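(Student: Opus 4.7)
The plan is to transfer the problem to the dual graph $G$ and then apply the $S$-lifting technology developed above, chiefly Lemma~\ref{bridge lemma} together with Corollary~\ref{cyclepaths}.

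By construction, $G$ is the parallel join $G = H_1 \paral H_2 \paral \cdots \paral H_n$, where each $H_i$ is the $\Upsilon^\vee$-dual of $H_i^\vee$. Applying Corollary~\ref{T-for-co-Hamiltonian} when $|V(H_i^\vee)| \geq 3$, together with iterations of Lemma~\ref{T-path-lift} and Lemma~\ref{T-parallel-lift} in the multigraph case (which arises when $H_i^\vee$ is a path with at least two edges), condition $T$ holds for every $H_i$ that is not the single edge $K_2$; in particular $T(H_i)$ fails only when $H_i^\vee = K_2$. Since $|V(G^\vee)| \geq 4$, not every $H_i^\vee$ can be $K_2$, so at least one of the $H_i$'s satisfies $T$ and hence $T(G)$ holds by Lemma~\ref{T-parallel-lift}.

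Fix an edge $e \in G$, say $e \in H_j$, and set $\Gamma := \paral_{i \neq j} H_i$, so $G = H_j \paral \Gamma$. Because $H_j$ is built using only the restricted operations $\set{\paral, \bolt^e, \bolt_e}$, one can isolate $e$ by tracing it upward through the decomposition tree of $H_j$ and repeatedly applying associativity of $\paral$, producing
\[
H_j = Q \paral (X \bolt e \bolt Y),
\]
where $Q$, $X$, $Y$ are (possibly empty) series-parallel graphs of the same restricted type. Setting $\Gamma^\ast := Q \paral \Gamma$, we have
\[
G = (X \bolt e \bolt Y) \paral \Gamma^\ast,
\]
in which $e$ is a bridge of the series factor. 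This matches the setup of Lemma~\ref{bridge lemma} exactly.

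The remainder of the proof is a case analysis that checks at least one of hypotheses (i)--(iii) of Lemma~\ref{bridge lemma}. By Corollary~\ref{T-for-co-Hamiltonian}, $T(X)$ (respectively $T(Y)$) holds whenever $X$ (respectively $Y$) is not a bare path. A key observation is that the restricted-$\bolt$ discipline forces each series chain inside $H_j$ to contain at most one ``complex'' (non-$K_2$) sub-factor surrounded by single-edge factors; one can therefore group the single-edge factors together with the complex one via Lemma~\ref{T-path-lift}, producing $T$-satisfying blocks whenever the ``complex'' side is non-trivial. Likewise, $T(\Gamma^\ast)$ holds whenever some factor of the parallel join $Q \paral \paral_{i \neq j} H_i$ satisfies $T$, again by Lemma~\ref{T-parallel-lift}. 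Thus, in the generic case Lemma~\ref{bridge lemma}(ii) applies; when $X \bolt e \bolt Y$ is itself a path one uses (i), and when $\Gamma^\ast$ happens to collapse to a path one uses (iii).

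The hypothesis $|V(G^\vee)| \geq 4$ is precisely what prevents all of $X$, $Y$, $Q$, and the remaining $H_i$'s from simultaneously collapsing to single edges, thereby guaranteeing that at least one of the $T$-conditions is available in every configuration. The final residual boundary case---where $X$, $Y$, and $\Gamma^\ast$ all degenerate into paths---forces $G$ itself into the cycle-plus-path form covered by Corollary~\ref{cyclepaths}, which supplies $S(G,e)$ directly.

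The main obstacle is the bookkeeping in the case analysis: verifying that the regroupings of series factors of $X$ and $Y$ into blocks of the form (path)$\bolt$(complex) always produce pieces satisfying $T$ (so that Lemma~\ref{bridge lemma}(ii) or (iii) becomes available), and checking that the residual corner case really does satisfy the cycle-plus-path hypothesis of Corollary~\ref{cyclepaths}. Once this is pinned down, each of the four scenarios yields $S(G,e)$ via a direct invocation of the corresponding lemma, completing the proof.
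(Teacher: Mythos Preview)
Your proposal has a genuine gap in the central decomposition step. You claim that for any edge $e\in H_j$ one can write
\[
G=(X\bolt e\bolt Y)\paral \Gamma^\ast,\qquad \Gamma^\ast=Q\paral\Gamma,
\]
by ``tracing $e$ upward'' and using associativity of $\paral$. This fails as soon as $e$ lies below a $\paral$ node which is itself below a (restricted) $\bolt$ node inside $H_j$. Concretely, take $H_j=e_1\bolt\bigl(e_2\paral(e\bolt e_3)\bigr)$ and let $G=H_j\paral\Gamma$. The endpoints of the path $e\bolt e_3$ are the two vertices of the inner cycle, not the source and terminal of $G$; there is no graph $\Gamma^\ast$ with source $s$ and terminal $t$ such that $G=(X\bolt e\bolt Y)\paral\Gamma^\ast$. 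Associativity of $\paral$ cannot move $e$ past the intervening $\bolt$ with $e_1$, and nothing in your sketch accounts for this nesting. Consequently Lemma~\ref{bridge lemma} is not applicable at the level of $G$, and your case analysis never gets off the ground for edges that are buried more than one $\paral$-level deep.

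This is exactly the obstruction the paper's proof is organised around. Rather than trying to flatten $G$ into a single bridge-lemma configuration, the paper proceeds by induction on $\height\Upsilon^\vee$: in the inductive case it strips the outermost edges from $H_1^\vee$ to obtain $H_1=\Gamma\bolt e_1\bolt\cdots\bolt e_k$ where $\Gamma$ again satisfies the hypotheses of the corollary with strictly smaller height. Induction gives $S(\Gamma)$ and $T(\Gamma)$; then Lemma~\ref{S-lift} (which you never invoke) pushes $S(\Gamma,e)$ outward through the series and parallel joins to $S(G,e)$ for $e\in\Gamma$, while Lemma~\ref{bridge lemma} handles the new bridge edges $e_1,\ldots,e_k$. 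The induction is what replaces your impossible global decomposition; without it, or without an explicit appeal to Lemma~\ref{S-lift} to climb through the alternating $\bolt/\paral$ layers, the argument does not close.
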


\begin{remark}
As in Remark \ref{ham arc-diagram} we may view each of the $H_i^\vee$ as non-crossing arc diagrams with a horizontal Hamilton path. Thus we may view $G^\vee$ as a non-crossing arc diagram with the Hamilton path from $s$ to $t$ as a horizontal line segment whose left endpoint is $s$ and right endpoint $t$ and all other edges as arcs above this line segment.
\end{remark}

\begin{proof}
By fixing $\Upsilon^\vee$ for $G^\vee$ we implicitly fix a decomposition tree $\Upsilon_i^\vee$ for each $H_i^\vee$. Indeed, $\Upsilon_i^\vee$ is the rooted subtree of $\Upsilon^\vee$ whose leaves are exactly those labelled with the edges of $H_i^\vee$. Note by definition of the $\Upsilon^\vee$-dual that $G = H_1 \paral \ldots \paral H_n$. If $H_i^\vee$ has at most 3 vertices then $\Upsilon_i^\vee$ has at most one $\bolt$ operation. Thus $\Upsilon_i$ has at most one $\paral$ operation so $H_i$ has at most one cycle. With this in mind we divide the proof into 3 cases: (1) each $H_i$ is a path; (2) each $H_i$ has at most 1 cycle; (3) $H_1^\vee$ has at least 4 vertices.  

\begin{enumerate}
\item
If each $H_i$ is a path then each $H_i^\vee$ has two vertices by Corollary \ref{T-for-co-Hamiltonian}. As $G^\vee$ has at least 4 vertices we have $n \geq 3$. In particular, $H_1 \paral H_2$ is a cycle so by Lemma \ref{cycle lemma} and Lemma \ref{bridge lemma} we have both $S(G)$ and $T(G)$.

\item
If $H_1$ has exactly one cycle then $H_1 = C \bolt \P$ with $C$ a cycle and $\P$ a path. Thus we have $T(H_1)$ by Corollary \ref{cyclepaths}. If $H_2$ is a path then by Lemma \ref{bridge lemma} we have $S(H_1 \paral H_2,e)$ for all $e \in H_2$. By Corollary \ref{cyclepaths} we have $S(H_1 \paral H_2, e)$ for all $e \in H_1$ and $T(H_1 \paral H_2)$. We apply the same techniques to the rest of the $H_i$ to get $S(G)$ and $T(G)$.

\item
As $H_1^\vee$ has arcs connecting its source and terminal we have that $H_1^\vee = \Gamma^\vee \paral e_1 \paral \ldots \paral e_k$. Since $H_1^\vee$ has at least 4 vertices so too does $\Gamma^\vee$, so it is a graph satisfying the hypothesis of Corollary \ref{S-for-co-Hamiltonians}. Thus from an inductive argument on $\height \Upsilon^\vee$ we may assume that $\Gamma$ satisfies both conditions $S$ and $T$. Since $H_1 = \Gamma \bolt e_1 \bolt \ldots \bolt e_k$ we have by Lemma \ref{T-path-lift} that $T(H_1)$ and by Lemma \ref{S-lift} that $S(H_1,e)$ for all $e \in \Gamma$. 

Now by symmetry and the previous cases either $H_2$ is a path or satisfies $T(H_2)$, so by Lemma \ref{bridge lemma} we have that $S(H_1 \paral H_2, e)$ for all $e \in \P$. Since we already have $S(H_1,e)$ for all $e \in \Gamma$ we have $S(H_1 \paral H_2,e)$ for all $e \in H_1$. By $T(H_1)$ we have $T(H_1 \paral H_2)$, so in particular if $H_2$ is a path or has exactly one cycle then we have $S(H_1 \paral H_2)$; on the other hand, if $H_2^\vee$ has at least 4 vertices then by symmetry we obtain $S(H_1 \paral H_2)$ anyway. We can repeat the symmetry argument to get $S(G)$ and $T(G)$.

\end{enumerate}
\end{proof}

In fact we can conclude substantially more. We can take \emph{any} graph and replace an edge with a piece as in Corollary~\ref{S-for-co-Hamiltonians} and conclude that condition 1 holds for edges from the piece.  The key is that Lemma \ref{S-lift} and Lemma \ref{cycle lemma} hold for any source-terminal graphs not just series-parallel graphs since the proofs never use more than that, so we can do a parallel join with any graph once we have a piece satisfying $S$. 

If $G$ is a graph, $(u,v) \in G$ is an edge, and $H$ is a source-terminal graph, then we can construct a new graph by deleting $(u,v)$ and gluing the source (resp. terminal) of $H$ to $u$ (resp. $v$). In effect we replace the edge $(u,v)$ with $H$.

\begin{prop} \label{replacement by co-Hamiltonian}
Let $H$ be a graph which is either a cycle or satisfies the hypothesis of Corollary \ref{S-for-co-Hamiltonians}. Let $G$ be any graph and replace any edge $(u,v) \in G$ which is not a self-loop with $H$; call the new graph $\wtilde G$. Then condition 1 holds for any $e \in H \ssq \wtilde G$ provided the loop number of $\wtilde G$ is at least 2. 
\end{prop}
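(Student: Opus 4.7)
The plan is to exhibit $\wtilde G$ as a parallel join and then deploy the results on simultaneous combination. Define $\Gamma := \wtilde G \smallsetminus E(H)$, which is $G \smallsetminus (u,v)$, viewed as a source-terminal graph with source $u$ and terminal $v$. Then $\wtilde G = H \paral \Gamma$ as source-terminal graphs using these two glued vertices as the common source and terminal. The first input is that $S(H, e)$ holds for every $e \in H$: when $H$ is a cycle this comes from Lemma~\ref{cycle lemma}, and when $H$ has the structure of Corollary~\ref{S-for-co-Hamiltonians}, that corollary delivers $S(H)$ directly.

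The next step is to transport $S(H, e)$ through the parallel join with $\Gamma$, even though $\Gamma$ is not in general series-parallel. For this I would re-examine the proof of the parallel-join case of Lemma~\ref{S-lift} and verify that its only ingredients are the $2$-vertex-cut identities of Lemma~\ref{Psi-P identities}(a)--(b) together with Euler's theorem applied to the homogeneous polynomials $\kirk{\Gamma}$ and $\breaker{\Gamma}$; neither ingredient refers to a recursive series-parallel decomposition of the second factor. The explicit polynomials $B_j$ and $C'$ constructed there involve only the generic data $\deg \breaker{H}$, $\deg \breaker{\Gamma}$, the $A_j, C$ certifying $S(H, e_1)$, and the edge variables. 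Consequently the identical computation yields $S(\wtilde G, e)$ as a source-terminal graph with source $u$ and terminal $v$. The first equation in the definition of $S(\wtilde G, e)$ reads $\kirk{\wtilde G} \in \langle \partial \kirk{\wtilde G}^e \rangle$, which is precisely condition~$1$ for $e$ in $\wtilde G$.

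It remains to confirm that the loop number hypothesis makes the conclusion non-vacuous, i.e.\ that $e$ is regular in $\wtilde G$. The edge $e$ lies in $H$, which contains no self-loops, so $e$ is not a self-loop; and since the loop number of $\wtilde G$ is at least $2$, the graph $\wtilde G \smallsetminus e$ has positive loop number and hence is not a tree. Finally, if $e$ were a bridge then $\kirk{\wtilde G}^e = 0$ and the containment just established would force $\kirk{\wtilde G} = 0$, contradicting that $\wtilde G$ is connected with positive loop number. Thus $e$ is regular, and condition~$1$ holds.

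The only substantive obstacle is the middle paragraph: one must carefully audit the proof of Lemma~\ref{S-lift} and convince oneself that every step---Euler's theorem on $\kirk{\Gamma}$ and $\breaker{\Gamma}$, the $2$-vertex-cut identities, and the purely formal manipulation of coefficients---remains valid when $\Gamma$ is an arbitrary source-terminal graph. Once that bookkeeping is done, the proposition is immediate from the chain $S(H,e) \Rightarrow S(\wtilde G, e) \Rightarrow 1(\wtilde G, e)$.
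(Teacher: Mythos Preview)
Your approach is essentially the same as the paper's: write $\wtilde G = H \paral \Gamma$, observe that the proofs of Lemma~\ref{S-lift} and Lemma~\ref{cycle lemma} use only the two-vertex-cut identities and Euler's theorem so they apply to an arbitrary source-terminal $\Gamma$, and then pass from $S$ to condition~1. The paper makes this extension explicit in the paragraph preceding the proposition.

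One small misattribution in the cycle case: Lemma~\ref{cycle lemma} does not assert $S(H,e)$ for a bare cycle $H$; it asserts $S(H\paral\Gamma,e)$ directly. The paper therefore cites Lemma~\ref{cycle lemma} for $S(\wtilde G,e)=S(H\paral\Gamma,e)$ in one step, rather than first establishing $S(H,e)$ and then invoking Lemma~\ref{S-lift}. Your route still works, since $S(H,e)$ for a cycle is trivial to verify by hand (after deleting $e$ one has $\dkirk{H}{ej}=0$ for all $j$, so the first equation is $\kirk{H}=B\cdot 1$, and the second is equally elementary); but the citation should be adjusted.

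It is also worth noting that the loop-number hypothesis does slightly more than guarantee regularity of $e$: since $\deg\kirk{\wtilde G}=(n-1)+m$ with $n=\deg\breaker{H}$ and $m=\deg\breaker{\Gamma}$, the condition loop$(\wtilde G)\geq 2$ forces $n+m-2\geq 1$, which is exactly the nonvanishing denominator needed in the Lemma~\ref{S-lift} computation.
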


Note that this proposition also gives an alternate (albeit considerably less elementary) proof for Proposition~\ref{e parellel} by replacing an edge by the cycle consisting of two parallel edges.

\begin{proof}
If $H$ is just a cycle, $G$ has at least one cycle, and $(u,v)$ is not a bridge, then by Lemma \ref{cycle lemma} we obtain simultaneous combination for $e$ and thus condition 1. Otherwise, we have $S(H,e)$ by Corollary \ref{S-for-co-Hamiltonians} so this implies $S(\wtilde G,e)$ and thus condition 1.
\end{proof}

To underscore the freedom available from the fact that $G$ is unrestricted, note that if we choose $G$ to be any series-parallel graph then we can construct a new series-parallel graph $\wtilde G$ such that $S(\wtilde G)$ holds by applying Proposition \ref{replacement by co-Hamiltonian} to every edge of $G$. We can also feed these graphs through Proposition \ref{replacement by co-Hamiltonian} into any other graph $\Gamma$ etcetera. 


\begin{figure}[h]
   \centering
       \includegraphics[page=1,width=.7\textwidth]{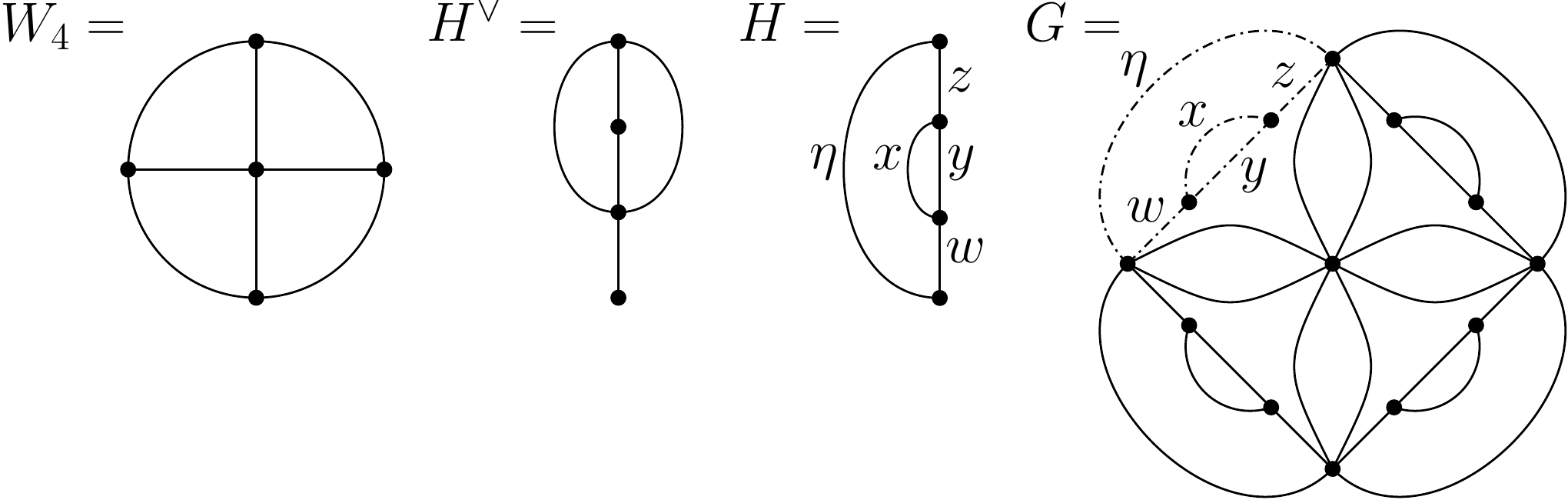}
 \caption{Example for Proposition \ref{replacement by co-Hamiltonian}}
 \label{fig:replacement example}
\end{figure}

We provide an example of this procedure using the graphs in Figure \ref{fig:replacement example}. Recall that the wheel graph $W_4$ satisfies condition 1 for spoke edges and does not satisfy condition 1 for rim edges. The graph $H^\vee$ is the series join of graphs satisfying the conditions of Corollary \ref{S-for-co-Hamiltonians}. We see that $H$ is an $\Upsilon$-dual of $H^\vee$ and has at least $4$ vertices, so in particular we have $S(H,e)$ for all $e \in H$ by Corollary~\ref{S-for-co-Hamiltonians}. 

Replacing the rim edges of $W_4$ with $H$ and doubling the spoke edges gives the graph $G$. By Proposition \ref{replacement by co-Hamiltonian} we see that $S(G,e)$ holds if $e$ lies in a copy of $H$. Additionally, if $e$ is not in a copy of $H$ then $e$ is a double edge. Therefore, condition 1 holds for every edge of $G$. 


We can verify that condition 1 holds explicitly in this example for the edge $\eta \in G$.  Calculations were done with Magma \cite{MAGMA} and a script is provided at \cite{KMYscript} and also included in the arXiv source. For $H$ we have
\begin{align*}
	\kirk{H} &= (x+y)\eta + (xy + (x+y)(z+w)) \\
	\breaker{H} &= \eta(xy + (x+y)(z+w)).      
\end{align*}
Let $n := \deg(\breaker{H}) = 3$ and let 
\begin{align*}
    A_x &:= \frac{1}{2}yz + \frac{1}{2}yw - \frac{1}{2}y\eta, \\
    A_y &:= xy + xz + xw + x\eta + \frac{1}{2}yz + \frac{1}{2}yw + \frac{3}{2}y\eta, \\
    A_z &:= 0, \\
    A_w &:= -xy - xz - xw - x\eta - \frac{3}{2}yz - \frac{3}{2}yw + \frac{1}{2}y\eta - z^2 - 2zw - w^2, \\
    A_\eta &:= 0, \\
    C   &:= y.
\end{align*}
One can check that 
	\begin{align*}
	  \kirk{H} &= \sum_{j \in H} A_j \dkirk{H}{\eta j} \\
	  \breaker{H} &= \sum_{j \in H} A_j \dbreaker{H}{\eta j} + C\dbreaker{H}{\eta}. 
	\end{align*}
That is, we have satisfied $S(H,\eta)$. Let $\Gamma$ be the undotted subgraph of $G$. Note that $\Gamma$ has $11$ vertices and $23$ edges, so each spanning tree excludes $13$ edges. That is, $\deg(\kirk{\Gamma}) = 13$ and $\deg(\breaker{\Gamma}) = 14 =: m$. We note $G = H \paral \Gamma$, so
	\begin{align*}
		\kirk{G} &= \bra{(x+y)\eta + (xy + (x+y)(z+w))} \breaker{\Gamma} + \eta(xy + (x+y)(z+w)) \kirk{\Gamma} \\
		\breaker{G} &= \eta(xy + (x+y)(z+w))\breaker{\Gamma}.      
	\end{align*}
The explicit forms of $\kirk{\Gamma}$ and $\breaker{\Gamma}$ are too large to be included in print. However, for the computations checking $S(G,\eta)$ it is enough to know that they are homogeneous of degrees $m-1$ and $m$ respectively. Setting
	\begin{align*}
		B_j := \piecewise{A_j + \frac{C}{n+m-2}\bra{mx_j}}{j \in \{x,y,z,w,\eta\}}{-\frac{(n-2)C}{n+m-2}x_j} 
	\end{align*}
	\begin{align*}
		C' := \bra{1-\frac{m}{n+m-2}}C
	\end{align*}
	we obtain polynomials which verify $S(G,\eta)$. In particular, condition 1 is satisfied for $\eta \in G$. 



\section{Conclusion}\label{sec conclusion}

Our results significantly increase the graphs and edges for which we know whether or not condition 1 holds.  This means that there are many more graphs for which the tools of \cite{Acond} can be applied.

After our investigations of multiple edges, wheel graphs, and series parallel graphs, we are left with some questions.  Lemma~\ref{lemma s} was inspired by the possibility that condition 1 might carry through $\Delta$ to $Y$ transformations.  Consider graphs containing the following structure.

\begin{center}
\includegraphics[width=0.44\textwidth]{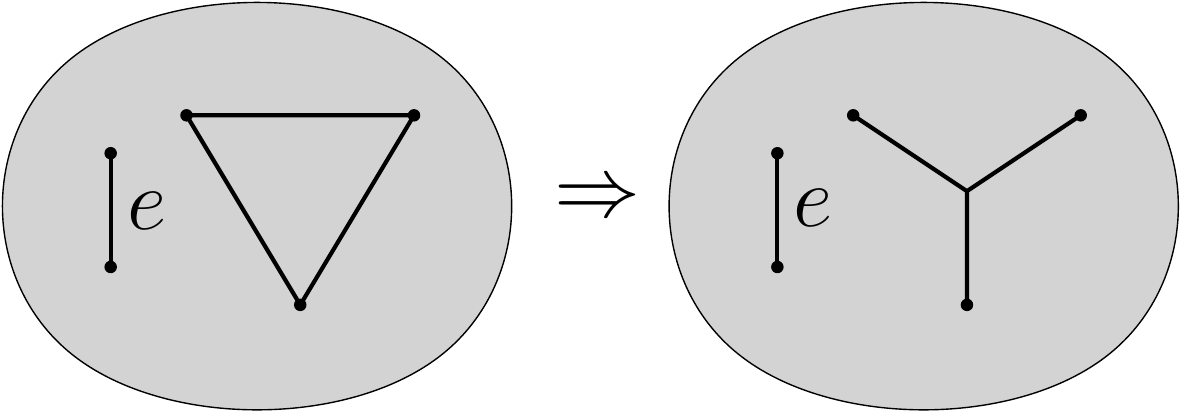}
\end{center}

Suppose we have a graph $ G_{\Delta} $, as on the left in the above diagram, with 3 edges forming a $ \Delta $ shape and a distinct fourth edge $ e $.  The assertion that $\Delta$ implies $Y$ for condition 1 would mean that if $ 1(G_{\Delta}, e) $ is true then we could replace the $ \Delta $-forming edges with edges forming a $ Y $, to make the graph $ G_Y $, and $ 1(G_{Y}, e) $ would still be true.  We suspect that this is the case.

The following heuristic has been very useful in this paper. The equations for condition 1 are unstable under natural operations such as gluing two graphs along $n>1$ vertices or adding parallel edges. However,  properties or transformations one suspects to imply condition 1 sometimes are preserved by these operations. For example, when we investigated whether or not condition 1 was true for all regular edges in series-parallel graphs, we found extra conditions that were well behaved under the operation of gluing two series-parallel graphs together which could be related to condition 1.  A study of these conditions then led us to the example of Figure~\ref{counterexample} which was a counterexample to our initial hopes.  The relationship between condition 1 and the $\Delta-Y$ transformation could be approached using this idea.  Furthermore, Aluffi observes \cite[page 5]{Acond} that in general ``condition 1 depends on the global features of the graph''; we can see the simultaneous combination conditions as serving to correct this enabling us to obtain local results like Proposition~\ref{replacement by co-Hamiltonian} even for condition 1 itself.

One could hope for a full characterization of graph edge pairs satisfying condition 1.  We have been looking for a structural graph theoretic characterization, but one could also ask about the computational question -- what is the computational complexity of checking condition 1 on a graph edge pair?  
Continuing our series-parallel investigations, both the structural and computational questions could be asked for specific classes of graphs.
It would also be interesting to study the proportion of edges that satisfy condition 1 for large graphs.  We suspect that this proportion may asymptotically approach zero, simply because as graphs become larger the condition becomes more complicated and difficult to satisfy, however we would like to have a more rigorous analysis of this problem.  

Finally one could consider Aluffi's condition 2.  Condition 2 seems to be much more geometric and less graph theoretic, so we expect it to be less amenable to this sort of analysis.

\bibliographystyle{plain}
\bibliography{main}

\end{document}